
\documentclass[12pt]{article}
\usepackage{amssymb,amsmath}
\usepackage{amsbsy}
\usepackage{amsthm}
\usepackage{multirow}
\usepackage{mathrsfs}
\usepackage{verbatim}
\usepackage[colorlinks]{hyperref}
\usepackage{fullpage}
\usepackage{mathdots}
\usepackage{graphicx,subfigure}
\usepackage[english]{babel}
\usepackage[utf8]{inputenc}
\usepackage{tikz}
\usepackage{mathtools}

\DeclarePairedDelimiter\floor{\lfloor}{\rfloor}

\usepackage{authblk}

\newtheorem{Theorem}[equation]{Theorem}
\newtheorem{Corollary}[equation]{Corollary}
\newtheorem{Lemma}[equation]{Lemma}
\newtheorem{Proposition}[equation]{Proposition}

\theoremstyle{definition}
\newtheorem{Definition}[equation]{Definition}
\newtheorem{Example}[equation]{Example}

\newtheorem{Convention}[equation]{Convention}

\theoremstyle{remark}




\newtheorem{Remark}[equation]{Remark}

\numberwithin{equation}{section}
\numberwithin{figure}{section}

\newcommand{\PP}{{\mathbb P}}
\newcommand{\C}{{\mathbb C}}

\newcommand{\R}{{\mathbb R}}

\newcommand{\N}{{\mathbb N}}

\newcommand{\mc}[1]{\mathcal{#1}}

\begin{document}

\title{Counting Borel Orbits in Symmetric Varieties of Types $BI$ and $CII$}

\author[1]{Mahir Bilen Can}
\author[2]{\"Ozlem U\u{g}urlu}

\affil[1]{{\small Tulane University, New Orleans; mahirbilencan@gmail.com}}    
\affil[2]{{\small Tulane University, New Orleans; ougurlu@tulane.edu}}

\normalsize

\date{January 16, 2018}
\maketitle

\begin{abstract}
This is a continuation of our combinatorial 
program on the enumeration of Borel orbits in 
symmetric varieties of classical types. 
Here, we determine the generating 
series the numbers of Borel orbits in 
$\mathbf{SO}_{2n+1}/\mathbf{S(O}_{2p}\times \mathbf{O}_{2q+1}\mathbf{)}$ (type $BI$)
and in $\mathbf{Sp}_n/\mathbf{Sp}_p\times \mathbf{Sp}_q$ (type $CII$).
In addition, we explore relations to lattice path enumeration.

\vspace{.2cm}

\noindent 
\textbf{Keywords:} Borel orbits, clans, lattice paths,
ODE's with irregular singular points.\\ 
\noindent 
\textbf{MSC: 05A15, 14M15} 
\end{abstract}

\section{Introduction}\label{S:Introduction}

The purpose of our paper is to 
continue the program that is initiated 
in our previous paper \cite{CU}, which
is about finding generating functions and 
their combinatorial interpretations for certain families 
of involutions, called clans, in Weyl groups. 
There is an important motivation for 
undertaking such a  task and it comes 
from a desire to better understand the 
cohomology rings of homogeneous varieties of the form 
$G/K$, where $K$ is the fixed 
subgroup of an involutory automorphism of a complex
reductive group $G$. 
In other words, there exists an automorphism $\theta: G\rightarrow G$
such that $\theta^2=id$ and $K=\{g\in G:\ \theta(g)=g \}$. 
Such a coset space is called a symmetric variety.

The study of symmetric varieties form an integral part of geometry
and many interesting manifolds are (locally) 
diffeomorphic to a symmetric variety.
For example, $n-1$ dimensional sphere in $\R^n$ 
can be recognized as $G(\R)/K(\R)$, where 
$G$ is $\mathbf{SO}_n$, the special orthogonal group of linear transformations
with determinant 1, and $K$ is its subgroup $ \mathbf{S(O_{n-1}\times O_1)}$
consisting of block matrices of the form 
$\begin{pmatrix}
A & 0 \\
0 & \pm 1
\end{pmatrix}$ where $A$ is an orthogonal matrix of order $n-1$.

Among the important properties of a symmetric variety 
are the following:
\begin{enumerate}
\item[(i)] $K$ is reductive, hence $G/K$ is affine 
as an algebraic variety.
\item[(ii)] The isotypic components of the 
coordinate ring $\C[G/K]$ are multiplicity-free as $G$-modules.
\end{enumerate}
The second listed property amounts to 
$G/K$ having finitely many orbits under the 
left translation action of a Borel subgroup of $G$. 
(See \cite{Brion}.) 
Here, by a Borel subgroup we mean a subgroup $B$ 
of $G$ which is maximal among connected
solvable subgroups. (For example, the subgroup
consisting of upper triangular matrices in 
the general linear group $\mathbf{GL}_n$ is 
a Borel subgroup.) 
Another good reason for studying Borel 
orbits in $G/K$ comes from the 
fact that the $B$-orbits in $G/K$ are in 
1-1 correspondence with the $K$-orbits in 
$G/B$ and the topology of the latter 
(flag) variety is completely determined by
the inclusion order on the 
set of Borel orbit closures. 
Therefore, it is a rather natural and important 
question to determine the number of Borel
orbits in $G/K$.

By a {\em classical symmetric variety} 
we mean one of the following homogeneous varieties 
\begin{table}[h!]
\centering
 \begin{tabular}{ c || c } 
 Type & Symmetric variety \\
\hline
\textbf{A0} & $\mathbf{SL}_n\times \mathbf{SL}_n/\mathbf{diag(SL}_n\mathbf{)}$ \\ 
\hline
\textbf{AI} & $\mathbf{SL}_n/\mathbf{SO}_n$ \\ 
\hline
\textbf{AII} & $\mathbf{SL}_{2n}/\mathbf{Sp}_n$ \\ 
\hline
\textbf{AIII} & $\mathbf{SL}_n/\mathbf{S(GL}_p\times \mathbf{GL}_q\mathbf{)}$, where $p+q=n$ \\ 
\hline
\textbf{B0} & $\mathbf{SO}_n\times \mathbf{SO}_n/\mathbf{diag(SO}_n\mathbf{)}$ \\ 
\hline
\textbf{BI} & $\mathbf{SO}_{2n+1}/\mathbf{S(O}_{2p}\times \mathbf{O}_{2q+1}\mathbf{)}$, where $p+q=n$ \\ 
\hline
\textbf{C0} & $\mathbf{Sp}_n\times \mathbf{Sp}_n/\mathbf{diag(Sp}_n\mathbf{)}$ \\ 
\hline
\textbf{CI} & $\mathbf{Sp}_n/\mathbf{GL}_n$ \\ 
\hline
\textbf{CII} & $\mathbf{Sp}_n/\mathbf{Sp}_p\times \mathbf{Sp}_q$, where $p+q=n$ \\ 
\hline
\textbf{DI} & $\mathbf{SO}_{2n}/\mathbf{S(O}_{p'}\times \mathbf{O}_{q'}\mathbf{)}$, where $p'+q'=2n$ \\ 
\hline
\textbf{DIII} & $\mathbf{SO}_{2n}/\mathbf{GL}_n$ \\ 
 \end{tabular}
 \caption{Classical symmetric varieties}
 \label{F:Table1}
\end{table} 
In this manuscript, somewhat loosely following
our previous work, where we studied the combinatorics
of the generating functions regarding the number of $B$-orbits 
in type $\textbf{AIII}$, we will give a count of the 
$B$-orbits for the cases of $\mathbf{BI}$ and $\mathbf{CII}$.
These two types correspond to decompositions of the 
vector spaces $\C^{2n+1}$ and $\C^{2n}$, respectively,  
into two orthogonally complementary subspaces with respect 
to a symmetric and a skew-symmetric bilinear
form. See~\cite{Howe}.

By looking at the orbits of the two sided 
action of $K\times B$ on $G$, it is easy to see 
that the cardinality of the set of $B$-orbits 
in $G/K$ is the same as the cardinality of the set of 
$K$-orbits in $G/B$. We know from \cite{WyserII} that,
for a symmetric variety $G/K$ as in Table~\ref{F:Table1},
the combinatorial objects parameterizing $K$-orbits
in $G/B$ have a rather concrete description; they 
are called ``clans'' with suitable adjectives. 
This nomenclature has first appeared in 
a paper of Matsuki and Oshima~\cite{MO}.
In~\cite{Yamamoto}, Yamamoto used these objects to 
determine the image of the moment map 
of the conormal bundle of $K$ orbits in $G/B$. 
(She worked with types \textbf{AIII} and \textbf{CII} only.) 
As far as we are aware of, 
after Yamamoto's work on clans, there was a long 
pause on the study of these combinatorial objects
until McGovern's work in~\cite{Monty09}
and Wyser's 2012 thesis~\cite{WyserI}, where 
Wyser clarified many obscurities around the definition of clans.
In~\cite{WyserII} he used them to study degeneracy 
loci and in~\cite{WyserIII} he used them to 
study the weak and strong Bruhat orders on $K$ orbit closures
in $G/B$ (in type \textbf{AIII}).
More recent work on the combinatorics of type \textbf{AIII} 
clans and applications to geometry can be found in~\cite{WyserIV}.
See~\cite{CJW} also.

We will refer to the clans corresponding to the Borel 
orbits in $\mathbf{Sp}_n/\mathbf{Sp}_p\times \mathbf{Sp}_q$ as ssymmetric $(2p,2q)$ clans
and we will call the clans corresponding to the 
Borel orbits of $\mathbf{SO}_{2n+1}/\mathbf{S(O}_{2p}\times \mathbf{O}_{2q+1}\mathbf{)}$
the symmetric $(2p,2q+1)$ clans.
However, we should mention that these names are local to our paper. 
The definitions of symmetric and 
ssymmetric clans are rather lengthy, so, we postpone 
their precise definitions to the preliminaries 
section and introduce the notation for their collections 
and the corresponding cardinalities only. 
\begin{align*}
BI(p,q) &:= \{ \text{symmetric $(2p,2q+1)$ clans} \}, \hspace{.5cm} b_{p,q} := \# BI(p,q);\\
CII(p,q) &:= \{ \text{ssymmetric $(2p,2q)$ clans} \},\hspace{.5cm} c_{p,q} := \# CII(p,q). 
\end{align*}

As we mentioned before, 
clans are in bijection with ``signed'' involutions. 
A signed involution is an involution in $S_n$, for some $n$,
such that each fixed point of the involution is labeled with a 
$+$ sign or with a $-$ sign. 
Assuming the existence of a particular such bijection, 
which we will present in the sequel, 
we proceed to 
denote by $\beta_{k,p,q}$ the 
number of symmetric $(2p,2q+1)$ clans whose
corresponding involution has exactly $k$ 2-cycles 
as a permutation. In a similar way, we denote 
by $\gamma_{k,p,q}$ the number of ssymetric $(p,q)$ clans 
whose corresponding involution has $k$ 2-cycles. 
Clearly, 
$$
b_{p,q} = \sum_k \beta_{k,p,q} \ \text{ and } \ 
c_{p,q} = \sum_k \gamma_{k,p,q}. 
$$
Our goal in this manuscript is to present various 
formulas and combinatorial interpretations for $\beta_{k,p,q}$'s,
$\gamma_{k,p,q}$'s, and foremost, for $b_{p,q}$'s and $c_{p,q}$'s.  

\begin{Convention}\label{A:Convention}
If $p$ and $q$ are two nonnegative integers such that $q \geq p$, then 
we assume that $\beta_{k,p,q}=0$ for all $0\leq k \leq 2q+1$.
\end{Convention}

Now we are ready to describe our results in more detail.
First of all, by analyzing 
the structure of symmetric clans 
we prove the following result:
\begin{Theorem}\label{T:first}
Let $p$ and $q$ be two nonnegative integers 
such that $p>q$. Then for every nonnegative 
integer $k$ with $k\leq 2q+1$, we have
\begin{align}\label{OS:first formula1}
\beta_{k,p,q} =
\begin{cases}
 {{n-2l} \choose {p-l} }{n \choose 2l} a_{2l} & \text{ if } \; k=2l; \\
  {{n-(2l+1)} \choose {p-(l+1)}} {n \choose 2l+1} a_{2l+1}& \text{ if } \; k=2l+1,
\end{cases}
\end{align}
where 
\begin{align}\label{A:formulafora}
a_{2l} := \sum_{b=0}^{l} {2l \choose 2b} \frac{(2b)!}{b!}
\hspace{.75cm} \text{  and }  \hspace{.75cm}
a_{2l+1} := \sum_{b=0}^{l} {{2l+1} \choose 2b} \frac{(2b)!}{b!}.
\end{align}
In particular we have 
$$
b_{p,q}  = \sum_{l=0}^q \left( {{n-2l} \choose {p-l} }{n \choose 2l} a_{2l}
+   {{n-(2l+1)} \choose {p-(l+1)}} {n \choose 2l+1} a_{2l+1}\right).
$$
\end{Theorem}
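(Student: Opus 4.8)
The plan is to analyze the shape of a symmetric $(2p,2q+1)$-clan directly --- sorting its entries into ``numbered'' pairs and into signs, and then assembling the count by the multiplication principle; no generating functions are needed for this statement. Throughout, write $i^{\ast}:=2n+2-i$ for the reflection of a position $i\in\{1,\dots,2n+1\}$ about the center $n+1$, this reflection being the involution built into the definition of a symmetric clan, and recall that in a clan only the matching of the numbered positions matters, the numbers themselves being mere labels. First I would locate the signs. The center $n+1$ cannot be a foot of a $2$-cycle: if its entry were a number matched to a position $j$, the reflection symmetry would force that number to occur at $j^{\ast}$ as well as at $(n+1)^{\ast}=n+1$ and at $j$, impossible since each number occurs exactly twice; hence $c_{n+1}\in\{+,-\}$. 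A clan with exactly $k$ two-cycles has $\#\{+\}+\#\{-\}=2n+1-2k$; combining this with the signature condition and $n=p+q$ gives $\#\{+\}=2p-k$ and $\#\{-\}=2q+1-k$ (in the normalization used for type $BI$), both nonnegative under the hypotheses $p>q$ and $k\le 2q+1$ --- and it is the impossibility of such clans when $q\ge p$ that underlies Convention~\ref{A:Convention}. Finally, since the reflection carries the left half $L:=\{1,\dots,n\}$ bijectively onto $\{n+2,\dots,2n+1\}$ and sends numbered positions to numbered positions, it restricts to a fixed-point-free involution on the $2k$ numbered positions that swaps $L$ with its complement; hence exactly $k$ numbered positions lie in $L$.

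The heart of the argument is the classification of the two-cycles. Given a two-cycle that meets $L$ in a position $i$, there are exactly three cases: (i) it is the \emph{straddling} pair $\{i,i^{\ast}\}$, which the reflection fixes; (ii) it is $\{i,j\}$ with $j\in L$, and then $\{i^{\ast},j^{\ast}\}$ is a second, distinct two-cycle; (iii) it is $\{i,j\}$ with $j\notin L\cup\{i^{\ast}\}$, and then $\{i^{\ast},j^{\ast}\}$ (whose only foot in $L$ is $j^{\ast}$) is again a second, distinct two-cycle. Thus the non-straddling two-cycles occur in reflection-pairs, which I will call \emph{doubles}: such a double is determined by the pair $\{i,j\}\subseteq L$ of positions it occupies together with one binary choice --- the ``nested'' form $\{i,j\},\{i^{\ast},j^{\ast}\}$ versus the ``crossing'' form $\{i,j^{\ast}\},\{j,i^{\ast}\}$. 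A straddling pair occupies one position of $L$ and contributes $1$ to $k$, a double occupies two positions of $L$ and contributes $2$; so if the clan has $c$ straddling pairs and $d$ doubles then $k=c+2d$, which matches the count of $k$ numbered positions in $L$ found above.

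Now I would assemble the count for a fixed $k$. Every symmetric $(2p,2q+1)$-clan with $k$ two-cycles is produced exactly once by the recipe: choose the $k$ numbered positions of $L$, in $\binom{n}{k}$ ways; choose the $2b$ of them (where $b:=d$) occupied by doubles, in $\binom{k}{2b}$ ways; partition those $2b$ positions into $b$ unordered pairs and choose, independently, nested or crossing for each --- altogether $(2b-1)!!\cdot 2^{b}=\frac{(2b)!}{b!}$ ways --- the remaining $k-2b$ numbered positions of $L$ then necessarily forming straddling pairs with nothing further to choose; and finally place the signs, where the right half and all numbered positions are now forced, the parity of $\#\{+\}=2p-k$ forces $c_{n+1}=-$ when $k$ is even and $c_{n+1}=+$ when $k$ is odd, and one then chooses which of the $n-k$ sign-bearing positions of $L$ carry a $+$, in $\binom{n-k}{p-\lceil k/2\rceil}$ ways. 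Summing over $b$ turns the second and third factors into $a_k$ from \eqref{A:formulafora}, and the product of the four factors is exactly \eqref{OS:first formula1}, since $\binom{n-k}{p-\lceil k/2\rceil}$ is $\binom{n-2l}{p-l}$ when $k=2l$ and $\binom{n-(2l+1)}{p-(l+1)}$ when $k=2l+1$. The displayed formula for $b_{p,q}$ then follows by summing $\beta_{k,p,q}$ over $k=0,1,\dots,2q+1$, i.e.\ by taking $\beta_{2l,p,q}+\beta_{2l+1,p,q}$ and letting $l$ run from $0$ to $q$.

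The step I expect to carry the real weight is the classification of the two-cycles: one must extract, from the (somewhat involved) definition of a symmetric clan, that the three cases above are exhaustive, that a double encodes exactly one bit beyond the pair of left positions it uses, and that the reflection symmetry imposes no further constraint once the left half and the center's sign have been fixed. Granting that structural description, locating the signs and carrying out the final count are routine binomial bookkeeping, and recognizing $\sum_{b}\binom{k}{2b}\frac{(2b)!}{b!}=a_k$ is immediate from \eqref{A:formulafora}.
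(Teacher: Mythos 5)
Your proposal is correct and follows essentially the same route as the paper's own proof: the classification of $2$-cycles into straddling pairs and reflection-paired ``doubles'' is exactly the paper's decomposition into the sets $I_2$ and $I_1=I_{1,1}\cup I_{1,2}$ (your nested/crossing dichotomy), your $(2b-1)!!\cdot 2^{b}=\frac{(2b)!}{b!}$ count matches the paper's ${2b \choose b}b!$, and the sign placement via Lemma~\ref{L:L1} and the factor ${n-k \choose p-\lceil k/2\rceil}$ coincides with the paper's argument. The only differences are presentational (you justify the exhaustiveness of the case analysis and the center-entry claim a bit more explicitly), so nothing further is needed.
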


The following formulae for the number of Borel orbits in 
$\mathbf{SO}_{2n+1}/\mathbf{S(O}_{2p}\times \mathbf{O}_{2q+1}\mathbf{)}$
for $q=0,1,2$ is now a simple consequence of our Theorem~\ref{T:first}.
\begin{align*}
b_{p,0} &= p+1 \\
b_{p,1} &= (p+1)a_0 + p(p+1)a_1 + \frac{p(p+1)}{2}a_2 + \frac{p(p+1)(p-1)}{6}a_3\\
&= \frac{7p^3 +15p^2+ 14p+6}{6}\\
b_{p,2} &= {p+2 \choose 2}\biggr( \frac{81p^3 +22 p^2 +137 p +60}{60} \biggr) \\
&= \frac{81p^5 +265 p^4 +365 p^3 +515 p^2 +454 p +120}{120}
\end{align*}

Theorem~\ref{T:first} tells us that, for every fixed $q$, the integer  
$b_{p,q}$ can be viewed as a specific value of 
a polynomial function of $p$. 
However, it is already apparent from the case of $q=1$
that this polynomial may have non-integer coefficients.
We conjecture that $q=0$ is the only case where $p\mapsto b_{p,q}$
is a polynomial function with integral coefficients. 
We conjecture also that for every nonnegative integer $q$, 
as a polynomial in $p$, $b_{p,q}$ is unimodal.

Note that the numbers $a_{2l}$ and $a_{2l+1}$ in Theorem~\ref{T:first}
($l=0,1,\dots,q$) are special 
values of certain hypergeometric functions. More precisely, 
\begin{align*}
a_{2l}  = \biggr(\frac{-1}{4}\biggr)^{-l} U\biggr(-l, \frac{1}{2}, -\frac{1}{4}\biggr), \\
a_{2l+1} = \biggr(\frac{-1}{4}\biggr)^{-l} U\biggr(-l, \frac{3}{2}, -\frac{1}{4}\biggr).
\end{align*}
where $U(a,b,z)$ is the confluent hypergeometric function of the second kind. 
Such functions form one of the two distinct families of hypergeometric
functions which solves the Kummer's differential equation 
\begin{align}\label{A:Kummers}
z y'' + (c-z) y' - a y = 0
\end{align}  
for some constants $a$ and $c$.
Kummer's ODE has a regular singular point at the origin
and it has an irregular singularity at infinity.

The expressions in (\ref{OS:first formula1}) are
too complicated for practical purposes, therefore we seek for 
better expressions in the forms of recurrences 
and generating functions for $\beta_{k,p,q}$'s. 
It turns out that between various $\beta_{k,p,q}$'s there are four 
``easy-to-derive'' recurrence relations as in (\ref{A:simplerecs}), 
and there are four ``somewhat easy-to-derive'' recurrence relations 
as in (\ref{A:kevenfr}), (\ref{A:koddfr}), 
(\ref{A:kevensr}), and (\ref{A:koddsr}). 
(We are avoiding showing these recurrences on purpose 
since they, especially the latter four, are rather lengthy.)
The first four relations do not mix $k$'s and they are linear. 
The second four recurrences are 3-term nonlinear recurrence relations 
and they do not mix $p,q$'s. 
Moreover, the relations (\ref{A:kevenfr}) and (\ref{A:koddfr}) are 
intervowen in the sense that both of them use 
consecutive terms in $k$'s. The relations
(\ref{A:kevensr}) and (\ref{A:koddsr}) maintain the parity of $k$,
however, their coefficients are more complicated than 
the previous two.

It is not futile to expect that the eight recurrence relations 
we talked about 
lead to a manageable generating function, hence to  
a new formulation of the numbers $\beta_{k,p,q}$. 
We pursued this approach 
by studying the generating polynomial 
\begin{align}\label{A:Hpq}
h_{p,q}(x):=\sum_{k\geq 0} \beta_{k,p,q}x^k
\end{align}
and we run into some surprising complications.
After pushing our computations as much as possible 
by using the easier, interrelated relations (\ref{A:kevenfr}) and (\ref{A:koddfr}),
we arrived at a $4\times 4$ system of linear ODE's with an 
irregular singular point at the origin:
\begin{align}\label{A:mainODE}
x^3 X'
=\begin{bmatrix}
(2p+2q-1) x^2+ 2 & x & -4pqx & - (2q+1) \\
x & (2p+2q-1)x^2+2 & - 2p & -(4pq+2p-2q-1)x\\
x^3 & 0 & 0 & 0\\
0 & x^3 & 0 & 0 
\end{bmatrix}
 X, 
\end{align}
where 
\begin{align*}
X = \begin{bmatrix}
u(x)\\
v(x) \\
A_e(x)\\
A_o(x)
\end{bmatrix}\qquad \text{  with } \qquad
\begin{aligned}
 u(x):=&A_e^{'}(x), \\ 
 v(x):=&A_o^{'}(x), \\
 A_e(x) := &\sum_{l=0}^{q} \beta_{2l,p,q} x^{2l}, \\
 A_o(x) := &\sum_{l=0}^{q} \beta_{2l+1,p,q} x^{2l+1}.
 \end{aligned}
\end{align*}
Without worrying about convergence, we are able to 
formally solve this system of ODE's however our method
does not yield a satisfactorily clean formula. Instead,
it provides us with a sequence of computational steps which eventually
can be used for finding good approximations to $\beta_{k,p,q}$'s
for any $p,q$. In order for not to break 
the flow of our exposition we decided 
to postpone the explanation of the intricacies of 
(\ref{A:mainODE}) to the appendix. 
Let us mention in passing that this type 
of ODE's (that is linear ODE's with irregular singular points) gave 
impetus to the development of reduction theory
for connections where the structure group is an algebraic group. 
See~\cite{BV}.
Now in a sense we are working our way back to
such ODE's by trying to find
the refined numbers of Borel orbits in a symmetric variety.

\vspace{.5cm}

To break free from the difficulties caused by complicated 
interactions between $\beta_{k,p,q}$'s, we consider 
the following alternative to $h_{p,q}(x)$:
\begin{align*}
b_{p,q}(x) := \sum_{l=0}^{q} (\beta_{2l,p,q} x^{q-l} + \beta_{2l+1,p,q}x^{q-l} ).
\end{align*}
Clearly, $b_{p,q}(x)$ is a polynomial
of degree $q$ and similarly to $h_{p,q}(x)$ its evaluation at $x=1$ gives $b_{p,q}$.
Let $B_p(x,y)$ denote the following generating function, 
which is actually a polynomial due to our convention (\ref{A:Convention}):
\begin{align*}
 B_{p}(x,y) := \sum_{q \geq 0} b_{p,q}(x) y^q.
\end{align*}
Now, by using the previously mentioned recurrences, we observe that 
$$
b_{p,q}(x)' = (p+q) b_{p,q-1}(x).
$$
From here it is not difficult to 
write down the governing partial differential equation for $B_p(x,y)$;
\begin{align}\label{A:fundamentalPDE}
\frac{\partial}{\partial x} B_p(x,y) - y^2\frac{\partial}{\partial y} B_p(x,y)= y(1+p) B_p(x,y).
\end{align}
By solving (\ref{A:fundamentalPDE}) we record a generating polynomial identity.
\begin{Theorem}\label{T:second}
If $f_p(z)$ denotes the polynomial that is obtained from $B_p(1,y)$
by the transformation $y\leftrightarrow z/(1-z)$, then we have 
\begin{align}\label{A:prep1}
f_p(z)
&= (1+z)^{p+1} \left( (p+1) + 2 \sum_{q\geq 1} 
(\beta_{2q,p,q}+\beta_{2q+1,p,q})z^q \right),
\end{align}
where $a_k$'s are as in Theorem~\ref{T:first}.
\end{Theorem}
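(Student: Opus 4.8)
The plan is to integrate the first-order linear PDE (\ref{A:fundamentalPDE}) explicitly and then read off the claimed identity for $f_p$. First I would observe that, since $B_p(x,y)$ is a polynomial in $x$ whose value at $x=1$ is what we ultimately want, it suffices to track how $B_p(x,y)$ depends on $x$ along the characteristic curves of the PDE. The characteristic system associated with $\partial_x B_p - y^2 \partial_y B_p = y(1+p) B_p$ is
\begin{align*}
\frac{dx}{1} = \frac{dy}{-y^2} = \frac{dB_p}{y(1+p)B_p},
\end{align*}
so along a characteristic the quantity $x + 1/y$ is constant, and writing $B_p$ in terms of this invariant gives a separable ODE for $B_p$ as a function of $x$ alone. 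Integrating $dB_p/B_p = (1+p)\,y\,dx$ with $y = 1/(c - x)$ (where $c = x + 1/y$ is the characteristic constant) produces a factor of $(c-x)^{-(1+p)} = y^{1+p}$ up to a function of $c$, i.e.
\begin{align*}
B_p(x,y) = (1 - x y)^{-(p+1)} \, G\!\left( \frac{y}{1 - xy} \right)
\end{align*}
for some one-variable function $G$ determined by an initial condition. (A short check against the normalization $B_p(0,y) = \sum_q b_{0,q}(0) y^q$ — equivalently the $p$-independent seed values coming from Convention~\ref{A:Convention} and the base cases — fixes $G$.)

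Next I would evaluate at $x = 1$: this gives $B_p(1,y) = (1-y)^{-(p+1)} G\!\left( y/(1-y) \right)$. The substitution $y \leftrightarrow z/(1-z)$ is exactly the one that inverts $y \mapsto y/(1-y)$ up to the transformation $1 - y \mapsto (1-2z)/(1-z)$ or, more usefully, makes the argument of $G$ become a clean variable; I would check that under $y = z/(1-z)$ one has $1 - xy\big|_{x=1} = 1 - z/(1-z) = (1-2z)/(1-z)$, and that the prefactor $(1-y)^{-(p+1)}$ turns into $(1-z)^{p+1}$. Tracking these substitutions carefully, $f_p(z) = B_p(1,y)\big|_{y = z/(1-z)}$ acquires the factor $(1+z)^{p+1}$ claimed in (\ref{A:prep1}) — here the sign/shift bookkeeping between $(1-2z)$, $(1-z)$, and $(1+z)$ is the part that must be done with care, and I would pin it down by re-deriving the transformation on the level of the defining sum $B_p(x,y) = \sum_q b_{p,q}(x) y^q$ rather than trusting a quick manipulation. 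The residual factor is $G$ evaluated at the transformed argument, which I would then identify with the bracketed series $(p+1) + 2\sum_{q \geq 1}(\beta_{2q,p,q} + \beta_{2q+1,p,q}) z^q$ by extracting the $x^0$-coefficient (equivalently the top-degree behavior) of $b_{p,q}(x)$: by definition of $b_{p,q}(x)$ the coefficient of the highest power $x^q$ is $\beta_{0,p,q} + \beta_{1,p,q}$, but reading off $G$ requires the terms that survive after the $(1-xy)^{-(p+1)}$ is stripped, which by the structure of $b_{p,q}(x)$ (degree $q$, with the relation $b_{p,q}'(x) = (p+q)b_{p,q-1}(x)$) are governed by the leading coefficients $\beta_{2q,p,q} + \beta_{2q+1,p,q}$.

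Concretely, the cleanest route to identify $G$ is to use the derivative relation $b_{p,q}(x)' = (p+q)\, b_{p,q-1}(x)$, which integrates to express every $b_{p,q}(x)$ in terms of the constant terms $b_{p,j}(0)$ for $j \le q$; since $b_{p,j}(0) = \beta_{2j,p,j} + \beta_{2j+1,p,j}$ by Convention~\ref{A:Convention} (all lower-index $\beta$'s with $k < 2j$ vanishing exactly when the variety forces them to), one gets $b_{p,q}(x) = \sum_{j} \binom{\text{stuff}}{} (\beta_{2j,p,j}+\beta_{2j+1,p,j}) x^{q-j}$ with binomial coefficients built from the products $(p+q)(p+q-1)\cdots$. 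Feeding this into $B_p(x,y) = \sum_q b_{p,q}(x) y^q$, resumming, and matching with $(1-xy)^{-(p+1)} G(y/(1-xy))$ forces $G(w) = (p+1) + 2\sum_{q\ge 1}(\beta_{2q,p,q}+\beta_{2q+1,p,q}) w^q$ (the factor $2$ is the bookkeeping artifact of $b_{p,q}(x)$ bundling an "even" and an "odd" contribution at the same power of $x$, and at $q = 0$ the two contributions coincide, giving $p+1$ rather than $2(p+1)$).

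The main obstacle I anticipate is not the PDE integration — that is routine — but the two coordinated change-of-variables computations: first the $x=1$ specialization composed with $y \leftrightarrow z/(1-z)$, where the interplay of $(1-y)$, $(1-2z)$, and $(1+z)$ must come out to precisely $(1+z)^{p+1}$ with no stray factor, and second the matching that isolates $G$ and explains the coefficient $2$ and the exceptional value $p+1$ at $q=0$. Both are elementary but error-prone; I would guard against sign errors by verifying the final identity (\ref{A:prep1}) directly for $q = 0, 1$ using the explicit values $b_{p,0} = p+1$ and $b_{p,1} = (7p^3+15p^2+14p+6)/6$ already recorded after Theorem~\ref{T:first}, which pins down $\beta_{0,p,0}+\beta_{1,p,0}$ and $\beta_{2,p,1}+\beta_{3,p,1}$ and hence the first two coefficients of $f_p(z)$.
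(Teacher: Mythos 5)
Your strategy coincides with the paper's own proof: derive $b_{p,q}'(x)=(p+q)\,b_{p,q-1}(x)$, pass to the transport equation (\ref{A:fundamentalPDE}), write its general solution as an arbitrary one-variable function of the characteristic invariant times a homogeneous factor, fix that function by the $x=0$ data, then set $x=1$ and change variables. One slip in the execution is the invariant itself: from $dy/dx=-y^2$ one gets $\tfrac{d}{dx}(1/y)=+1$, so the quantity conserved along characteristics is $1/y-x=(1-xy)/y$, not $x+1/y$; the general solution you then display, $B_p(x,y)=(1-xy)^{-(p+1)}G\bigl(y/(1-xy)\bigr)$, is nevertheless correct (it is the paper's $y^{-(p+1)}G\bigl((1-xy)/y\bigr)$ after reparametrizing $G$) and can be verified by direct substitution, so this is harmless but should be repaired.

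More seriously, the two places you yourself flag as ``error-prone bookkeeping'' are exactly where the argument is not yet a proof. First, the substitution: with $y=z/(1-z)$, taken literally from the theorem's wording, one gets $1-y=(1-2z)/(1-z)$ and the factor $(1+z)^{p+1}$ never materializes; the substitution that works, and the one the paper actually performs, is $z=y/(1-y)$, i.e.\ $y=z/(1+z)$, so that $(1-y)^{-(p+1)}=(1+z)^{p+1}$ and the argument $y/(1-y)$ of $G$ becomes $z$. You anticipated the tension but left it unresolved. Second, the identification of $G$, i.e.\ the constant $p+1$ and the factor $2$: evaluating the definition of $b_{p,q}(x)$ at $x=0$ gives the $l=q$ term $b_{p,q}(0)=\beta_{2q,p,q}+\beta_{2q+1,p,q}$, and your explanation that the $2$ appears because the even and odd contributions ``coincide'' is not correct --- they equal $\binom{p+q}{2q}a_{2q}$ and $\binom{p+q}{2q+1}a_{2q+1}$, which are generally different (already at $q=0$ they are $1$ and $p$). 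The paper closes this step by asserting $b_{p,q}(0)=2(\beta_{2q,p,q}+\beta_{2q+1,p,q})$ for $q>0$, stated without further justification, and that is precisely where the $2$ in (\ref{A:prep1}) enters; your argument as written would instead yield the identity without the factor $2$, so this coefficient must be settled (your proposed numerical check against $b_{p,1}$ is in fact the right way to adjudicate it) before the proof is complete.
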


\vspace{.5cm}
Next, we proceed explain our results on the
number of Borel orbits in $\mathbf{Sp}_n/\mathbf{Sp}_p\times \mathbf{Sp}_q$.
Recall that the notation $\gamma_{k,p,q}$ stands for the number 
of ssymmetric $(2p,2q)$ clans whose corresponding involutions have 
exactly $k$ 2-cycles. 
By counting the number of possible choices 
for the 2-cycles and the fixed points in an involution corresponding to a 
ssymmetric $(2p,2q)$ clan, we obtain the following symmetric expression:
\begin{align}\label{A:gamma kqp1}
\gamma_{k,p,q} = \frac{(q+p)! }{ (q-k)! (p-k)! k!}.
\end{align}
Note that the formula in 
(\ref{A:gamma kqp1}) is defined independently of the inequality $q<p$,
therefore, $\gamma_{k,p,q}$'s are defined for all nonnegative integers 
$p,q$, and $k$ with the convention that $\gamma_{0,0,0}=1$. 
As we show in the sequel (Lemma~\ref{T:rec}) $\gamma_{k,p,q}$'s satisfy
a 3-term recurrence 
\begin{equation}\label{recurrence1}
\gamma_{k,p,q} =  \gamma_{k,p-1,q} +  
\gamma_{k,p,q-1} + 2(q+p-1) \gamma_{k-1,p-1,q-1}\; \; \text{and} \;\; 
\gamma_{0,p,q}= {p+q \choose p}.
\end{equation}
Consequently, we obtain our first result on the number of ssymmetric $(2p,2q)$ clans.
\begin{Proposition}\label{P:First recurrence}
If $p$ and $q$ are positive integers, then the number of ssymmetric $(2p,2q)$ clans
satisfies the following recurrence
\begin{equation}\label{A:recurrence1}
c_{p,q} =  c_{p-1,q} + c_{p,q-1} +  2(p+q-1) c_{p-1,q-1}.
\end{equation}
\end{Proposition}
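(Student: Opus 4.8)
The plan is to deduce Proposition~\ref{P:First recurrence} directly from the 3-term recurrence~(\ref{recurrence1}) for the refined numbers $\gamma_{k,p,q}$ by summing over $k$. Recall that $c_{p,q} = \sum_k \gamma_{k,p,q}$, so the idea is to fix $p,q \geq 1$, write down~(\ref{recurrence1}) for each $k$, and add these identities up. First I would sum the identity
\begin{equation*}
\gamma_{k,p,q} = \gamma_{k,p-1,q} + \gamma_{k,p,q-1} + 2(p+q-1)\gamma_{k-1,p-1,q-1}
\end{equation*}
over all $k \geq 0$. The left-hand side becomes $c_{p,q}$ by definition. The first term on the right sums to $c_{p-1,q}$ and the second to $c_{p,q-1}$, again directly from the definition of the $c$'s. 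For the last term, $\sum_{k \geq 0} 2(p+q-1)\gamma_{k-1,p-1,q-1} = 2(p+q-1)\sum_{k \geq 0}\gamma_{k-1,p-1,q-1} = 2(p+q-1)\sum_{j \geq -1}\gamma_{j,p-1,q-1}$, and since $\gamma_{-1,p-1,q-1} = 0$ (there is no involution with $-1$ two-cycles; equivalently the formula~(\ref{A:gamma kqp1}) is only defined for $k \geq 0$ and we extend by zero), this collapses to $2(p+q-1)c_{p-1,q-1}$. Combining these gives exactly~(\ref{A:recurrence1}).

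A couple of small technical points need attention to make the summation rigorous. One must check that all the sums involved are finite, which holds because $\gamma_{k,p,q} = 0$ whenever $k > \min(p,q)$ (visible from the factorials in~(\ref{A:gamma kqp1})), so each of $c_{p,q}$, $c_{p-1,q}$, $c_{p,q-1}$, $c_{p-1,q-1}$ is a finite sum; shifting the index in the cross term is therefore legitimate. One also wants to confirm the base/degenerate cases are consistent: the hypothesis $p,q \geq 1$ ensures $p-1, q-1 \geq 0$ so every term on the right-hand side is a well-defined quantity under the stated conventions ($\gamma_{0,0,0}=1$, and $\gamma_{0,p,q} = \binom{p+q}{p}$ from~(\ref{recurrence1})).

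Alternatively, and perhaps more transparently, one can prove~(\ref{A:recurrence1}) directly from the combinatorics of ssymmetric $(2p,2q)$ clans without passing through the refined recurrence: the recurrence~(\ref{recurrence1}) itself is established in Lemma~\ref{T:rec} by a case analysis on the structure of the involution (whether a distinguished position is a fixed point, or part of a 2-cycle, and of which sign/type), and summing that case analysis over the number of 2-cycles yields the clan-level statement. But since~(\ref{recurrence1}) is already available to us as a stated earlier result, the cleanest route is simply to sum it over $k$, which is what I would present.

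I do not anticipate a genuine obstacle here; the only thing to be careful about is the index shift in the cross term and the vanishing of $\gamma_{-1,p-1,q-1}$, together with ensuring finiteness of the sums so the interchange of summation with the recurrence is valid. Everything else is bookkeeping.
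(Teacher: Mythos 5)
Your proposal is correct and follows essentially the same route as the paper: the paper also proves Proposition~\ref{P:First recurrence} by summing the refined recurrence~(\ref{recurrence1}) over $k$, there starting at $k=1$ and cancelling the boundary terms $\gamma_{0,p,q}-\gamma_{0,p-1,q}-\gamma_{0,p,q-1}$ together with the top terms, rather than summing over all $k\geq 0$ with the convention $\gamma_{-1,\cdot,\cdot}=0$. The only point to tidy up is that Lemma~\ref{T:rec} asserts the recurrence only for $k\geq 1$, so your sum over all $k\geq 0$ implicitly invokes the $k=0$ instance, which (with $\gamma_{-1,p-1,q-1}=0$) is exactly Pascal's identity $\binom{p+q}{p}=\binom{p+q-1}{p-1}+\binom{p+q-1}{p}$ --- true, but worth a one-line verification, or equivalently start the sum at $k=1$ and absorb the $\gamma_{0,\cdot,\cdot}$ terms via Pascal as the paper does.
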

At this point we start to notice some 
similarities between the combinatorics of ssymmetric clans 
and our work in \cite{CU}, where
we studied the generating functions and combinatorial interpretations
of the numbers of Borel orbits in symmetric 
varieties of type $\textbf{AIII}$. 
Following the notation from the cited reference, let us denote by $\alpha_{p,q}$
the number of Borel orbits in 
$\mathbf{SL}_n/\mathbf{S(GL}_p\times \mathbf{GL}_q\mathbf{)}$, where $p+q=n$.
Then we have  
\begin{align}\label{A:recurrence12}
\alpha_{p,q} = \alpha_{p-1,q} + \alpha_{p,q-1} + (p+q-1)\alpha_{p-1,q-1}
\end{align}
holds true for all $p,q\geq 1$. From this relation, we obtained many 
combinatorial results on $\alpha_{p,q}$'s in \cite{CU}. 
Here, by exploiting similarities between the two 
recurrences (\ref{A:recurrence1}) and (\ref{A:recurrence12}), 
we are able to follow the same route and obtain
analogues of all of the results of \cite{CU}. 
To avoid too much repetition, we will focus only
on the selected analogues of our results from the previous paper. 
First we have a result on the generating function for $c_{p,q}$'s.

Let $v(x,y)$ denote the bivariete generating function
\begin{align}\label{A:vxy}
v(x,y) = \sum_{p,q\geq 0} c_{p,q} \frac{(2x)^q y^p}{p!}.
\end{align}
As we show that in the sequel, $v(x,y)$ obeys a first
order linear partial differential equation of the form
\begin{align}\label{A:oursecondPDE}
(-2x^2) \frac{\partial v(x,y)}{\partial x} + (1-2x-4xy)\frac{\partial v(x,y)}{\partial y}=  (1+2x)\ v(x,y).
\end{align}
with initial conditions
$$
v(0,y) = e^y\ \text{ and } \ v(x,0) = \frac{1}{1-2x}.
$$
The solution of (\ref{A:oursecondPDE}) gives us a remarkable expression
for the generating function (\ref{A:vxy}) in suitably transformed coordinates.

\begin{Theorem}\label{T:main2}
Let $r$ and $s$ be two algebraically independent 
variables that are related to $x$ and $y$ by the relations 
\begin{align*}
x(r,s)= \frac{r}{2rs+1} \ \text{ and } \ 
y(r,s)=\frac{3s + 4r^2 s^3 - 6r^2s^2 + 6rs^2-6rs }{3(2rs+1)^2}.
\end{align*}
In this case, the generating function $v(x,y)$ 
of $c_{p,q}$'s in $r,s$-coordinates is given by 
\begin{align*}
v(r,s) = \frac{e^{s} (2rs+1)}{1-2r}.
\end{align*}
\end{Theorem}
\vspace{.5cm}

Next, we explain the most combinatorial results of our paper. 
The {\em $(p,q)$-th Delannoy number}, denoted by $D(p,q)$, 
is defined via the recurrence relation
\begin{align}\label{D:basic recurrence}
D(p,q)=D(p-1,q)+D(p,q-1)+D(p-1,q-1)
\end{align}
with respect to the initial conditions $D(p,0)=D(0,q)=D(0,0)=1$. 
It is due to the linear nature of (\ref{D:basic recurrence}) that 
the generating function for $D(p,q)$'s is relatively simple:
$$
\sum_{
\begin{subarray}{l}  
p+q \geq 0 \\ 
p,\, q\, \in \N
\end{subarray}}
D(p,q)x^{i}y^{j} = \frac{1}{1-x-y-xy}.
$$ 
One of the most appealing properties of Delannoy numbers
is that they give the count of lattice paths that move with
unit steps $E:=(1,0)$, $N:=(0,1)$, and $D:=(1,1)$ in the plane.
More precisely, $D(p,q)$ gives the number of lattice paths
that start at the origin $(0,0)\in \N^2$ and ends at $(p,q)\in \N^2$
moving with $E,N$, and $D$ steps only. 
We will refer to such paths as the Delannoy paths
and denote the set of them by $\mc{D}(p,q)$.
For example, if $(p,q)=(2,2)$, then $D(2,2)=13$.
In Figure~\ref{F:22} we listed the corresponding Delannoy paths.

\begin{figure}[h]
\begin{center}
\begin{tikzpicture}[scale=.5]

\begin{scope}[xshift=-15cm]
\draw (0,0) grid (2,2);
\draw[ultra thick,red] (0,0) to (2,0);
\draw[ultra thick,red] (2,0) to (2,2);
\end{scope}

\begin{scope}[xshift=-12.5cm]
\draw (0,0) grid (2,2);
\draw[ultra thick,red] (0,0) to (1,0);
\draw[ultra thick,red] (1,0) to (2,1);
\draw[ultra thick,red] (2,1) to (2,2);
\end{scope}

\begin{scope}[xshift=-10cm]
\draw (0,0) grid (2,2);
\draw[ultra thick,red] (0,0) to (1,0);
\draw[ultra thick,red] (1,0) to (1,1);
\draw[ultra thick,red] (1,1) to (2,1);
\draw[ultra thick,red] (2,1) to (2,2);
\end{scope}

\begin{scope}[xshift=-7.5cm]
\draw (0,0) grid (2,2);
\draw[ultra thick,red] (0,0) to (1,0);
\draw[ultra thick,red] (1,0) to (1,1);
\draw[ultra thick,red] (1,1) to (2,2);
\end{scope}

\begin{scope}[xshift=-5cm]
\draw (0,0) grid (2,2);
\draw[ultra thick,red] (0,0) to (1,0);
\draw[ultra thick,red] (1,0) to (1,1);
\draw[ultra thick,red] (1,1) to (1,2);
\draw[ultra thick,red] (1,2) to (2,2);
\end{scope}

\begin{scope}[xshift=-2.5cm]
\draw (0,0) grid (2,2);
\draw[ultra thick,red] (0,0) to (1,1);
\draw[ultra thick,red] (1,1) to (2,1);
\draw[ultra thick,red] (2,1) to (2,2);
\end{scope}

\begin{scope}[xshift=0cm]
\draw (0,0) grid (2,2);
\draw[ultra thick,red] (0,0) to (2,2);
\end{scope}

\begin{scope}[xshift=2.5cm]
\draw (0,0) grid (2,2);
\draw[ultra thick,red] (0,0) to (1,1);
\draw[ultra thick,red] (1,1) to (1,2);
\draw[ultra thick,red] (1,2) to (2,2);
\end{scope}

\begin{scope}[xshift=5cm]
\draw (0,0) grid (2,2);
\draw[ultra thick,red] (0,0) to (0,1);
\draw[ultra thick,red] (0,1) to (2,1);
\draw[ultra thick,red] (2,1) to (2,2);
\end{scope}

\begin{scope}[xshift=7.5cm]
\draw (0,0) grid (2,2);
\draw[ultra thick,red] (0,0) to (0,1);
\draw[ultra thick,red] (0,1) to (1,1);
\draw[ultra thick,red] (1,1) to (2,2);
\end{scope}

\begin{scope}[xshift=10cm]
\draw (0,0) grid (2,2);
\draw[ultra thick,red] (0,0) to (0,1);
\draw[ultra thick,red] (0,1) to (1,1);
\draw[ultra thick,red] (1,1) to (1,2);
\draw[ultra thick,red] (1,2) to (2,2);
\end{scope}

\begin{scope}[xshift=12.5cm]
\draw (0,0) grid (2,2);
\draw[ultra thick,red] (0,0) to (0,1);
\draw[ultra thick,red] (0,1) to (1,2);
\draw[ultra thick,red] (1,2) to (2,2);
\end{scope}

\begin{scope}[xshift=15cm]
\draw (0,0) grid (2,2);
\draw[ultra thick,red] (0,0) to (0,2);
\draw[ultra thick,red] (0,2) to (2,2);
\end{scope}

\end{tikzpicture}

\end{center}
\caption{Delannoy paths}
\label{F:22}
\end{figure}
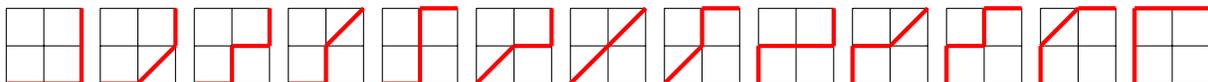

Let $L$ be a Delannoy path that ends at the 
lattice point $(p,q)\in \N$. 
We agree to represent $L$ as a word 
$L_1L_2\dots L_k$, where each $L_i$ $(i=1,\dots, k)$
is a pair of lattice points, say $L_i=((a,b),(c,d))$,
and $(c-a,d-b)\in \{N,E,D\}$. 
In this notation, we define the weight of the 
$i$-th step as 
\begin{align*}
weight(L_i) = 
\begin{cases}
1 & \text{ if } L_i =((a,b),(a+1,b)); \\
1 & \text{ if } L_i =((a,b),(a,b+1)); \\
2(a+b+1) & \text{ if } L_i= ((a,b),(a+1,b+1)).  
\end{cases}
\end{align*}
Finally, we define the weight of $L$, denoted by $\omega(L)$ 
as the product of the weights of its steps:
\begin{align}\label{A:weightofL}
\omega(L)  = weight(L_1) weight(L_2)\cdots weight(L_k).
\end{align}

\begin{Example}
Let $L$ denote the Delannoy path that is depicted in Figure~\ref{F:2D}.
In this case, the weight of $L$ is $\omega(L) = 6\cdot 12\cdot 16 = 1152$.

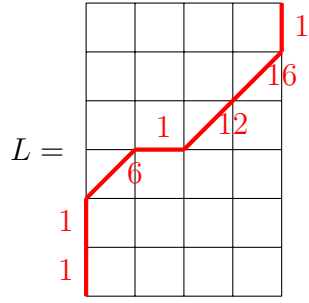
\begin{figure}[h]
\begin{center}
\begin{tikzpicture}[scale=.65]

\begin{scope}
\node at (-1,3) {$L=$};
\draw (0,0) grid (4,6);
\draw[ultra thick,red] (0,0) to (0,1) node[anchor=north east] {1};
\draw[ultra thick,red] (0,1) to (0,2) node[anchor=north east] {1};
\draw[ultra thick,red] (0,2) to (1,3) node[anchor=north] {6};
\draw[ultra thick,red] (1,3) to (2,3) node[anchor=south east] {1};
\draw[ultra thick,red] (2,3) to (3,4) node[anchor=north] {12};
\draw[ultra thick,red] (3,4) to (4,5) node[anchor=north] {16};
\draw[ultra thick,red] (4,5) to (4,6) node[anchor=north west] {1};
\end{scope}
\end{tikzpicture}
\end{center}
\caption{Two weighted Delannoy paths.}
\label{F:2D}
\end{figure}

\end{Example}

\begin{Proposition}\label{P:anotherpaththeorem}
Let $p$ and $q$ be two nonnegative integers and 
let $\mc{D}(p,q)$ denote the corresponding set of Delannoy 
paths. 
In this case, we have 
$$
c_{p,q} = \sum_{L\in \mc{D}(p,q)} \omega(L).
$$
\end{Proposition}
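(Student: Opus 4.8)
The plan is to show that the right-hand side, regarded as a function of $(p,q)$, obeys exactly the same recurrence and boundary values as $c_{p,q}$, and then to conclude by the uniqueness of the solution of that recurrence (Proposition~\ref{P:First recurrence}). Accordingly, set
$$
f(p,q) := \sum_{L\in \mc{D}(p,q)} \omega(L) \qquad (p,q\ge 0),
$$
with the convention that the empty path contributes the empty product $1$, so that $f(0,0)=1$.

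First I would settle the boundary. For $q=0$ the set $\mc{D}(p,0)$ consists of the single path made of $p$ consecutive East steps, each of weight $1$, so $\omega(L)=1$ and $f(p,0)=1$; by symmetry $f(0,q)=1$. On the other hand, (\ref{A:gamma kqp1}) with $q=0$ forces $k=0$ and gives $\gamma_{0,p,0}=1$, hence $c_{p,0}=1$, and likewise $c_{0,q}=1$. Thus $f$ and $c$ agree on the two axes.

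Next I would establish the recurrence for $f$. Fix $p,q\ge 1$ and partition $\mc{D}(p,q)$ according to the last step $L_k$, which is one of the three unit steps terminating at $(p,q)$. If $L_k$ is the East step $((p-1,q),(p,q))$, then $L=L'L_k$ with $L'\in\mc{D}(p-1,q)$ and $\omega(L)=\omega(L')\cdot \mathrm{weight}(L_k)=\omega(L')$; summing over these paths gives $f(p-1,q)$. The North case is identical and contributes $f(p,q-1)$. If $L_k$ is the diagonal step $((p-1,q-1),(p,q))$, then $L=L'L_k$ with $L'\in\mc{D}(p-1,q-1)$ and, by the definition of the weight, $\mathrm{weight}(L_k)=2\bigl((p-1)+(q-1)+1\bigr)=2(p+q-1)$, so these paths contribute $2(p+q-1)f(p-1,q-1)$. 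Adding the three cases yields
$$
f(p,q) = f(p-1,q) + f(p,q-1) + 2(p+q-1)\,f(p-1,q-1) \qquad (p,q\ge 1),
$$
which is precisely the recurrence (\ref{A:recurrence1}) satisfied by $c_{p,q}$.

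Finally, since this recurrence expresses the value at $(p,q)$ in terms of values at lattice points with strictly smaller coordinate sum, an induction on $p+q$ — with the base case supplied by the axis computations above — gives $f(p,q)=c_{p,q}$ for all $p,q\ge 0$, which is the assertion of Proposition~\ref{P:anotherpaththeorem}. I do not anticipate a genuine obstacle here; the only points requiring care are purely bookkeeping: checking that a diagonal step terminating at $(p,q)=(a+1,b+1)$ has weight $2(a+b+1)=2(p+q-1)$, matching the coefficient in Proposition~\ref{P:First recurrence}, and that the $E$ and $N$ steps have weight $1$ so as to reproduce the linear part of the recurrence. (An equally valid variant would peel off the \emph{first} step of $L$ instead of the last; the last-step decomposition is marginally cleaner because the diagonal weight there depends only on the endpoint $(p,q)$.)
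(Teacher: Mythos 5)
Your proposal is correct and follows essentially the same route as the paper: peel off the last step of each Delannoy path, observe that an $E$ or $N$ step has weight $1$ while a terminal diagonal step has weight $2(p+q-1)$, deduce the recurrence $f(p,q)=f(p-1,q)+f(p,q-1)+2(p+q-1)f(p-1,q-1)$, and conclude by induction on $p+q$ using Proposition~\ref{P:First recurrence} and the boundary values. Your careful bookkeeping of the diagonal weight $2(p+q-1)$ is right (the paper's proof momentarily writes $2(p+q)-1$, a typo, before using the correct coefficient in its final display), so no changes are needed.
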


Although Proposition~\ref{P:anotherpaththeorem}
expresses $c_{p,q}$ as a combinatorial summation 
it does not give a combinatorial set of objects whose
cardinality is given by $c_{p,q}$. The last result our paper
offers such an interpretation. 
\begin{Definition}
A $k$-diagonal step (in $\N^2$) is a diagonal step $L$ of the form 
$L=((a,b),(a+1,b+1))$, where $a,b\in \N$ and $k=a+b+1$.
\end{Definition}
As an example, in Figure~\ref{F:4steps} we depict all $4$-diagonal steps in $\N^2$. 
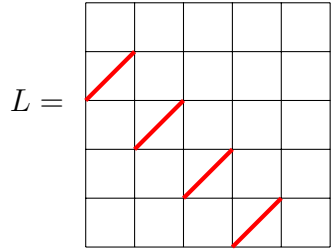
\begin{figure}[h]
\begin{center}
\begin{tikzpicture}[scale=.65]
\begin{scope}
\node at (-1,3) {$L=$};
\draw (0,0) grid (5,5);
\draw[ultra thick,red] (3,0) to (4,1);
\draw[ultra thick,red] (2,1) to (3,2);
\draw[ultra thick,red] (1,2) to (2,3);
\draw[ultra thick,red] (0,3) to (1,4);
\end{scope}
\end{tikzpicture}
\end{center}
\caption{$4$-diagonal steps in $\N^2$.}
\label{F:4steps}
\end{figure}

Next, we define the ``weighted Delannoy paths.''
\begin{Definition}
By a labelled step we mean a pair $(K,m)$,
where $K\in \{N,E,D\}$ and $m$ is a positive integer
such that $m=1$ if $K=N$ or $K=E$. 
A weighted $(p,q)$ Delannoy path is a word of the form 
$W:=K_1\dots K_r$, where $K_i$'s $(i=1,\dots, r)$ 
are labeled steps $K_i=(L_i,m_i)$ such that 
\begin{itemize}
\item $L_1\dots L_r$ is a Delannoy path from $\mc{D}(p,q)$;
\item if $L_i$ ($1\leq i \leq r$) is a $k$-th 
diagonal step, then $2\leq m_i \leq  2k-1$.
\end{itemize}
The set of all weighted $(p,q)$ Delannoy 
paths is denoted by $\mc{D}^w(p,q)$. 
\end{Definition}

\begin{Theorem}\label{T:anotherpaththeorem}
There is a bijection between the set of weighted $(p,q)$ Delannoy 
paths and the set of ssymmetric $(2p,2q)$ clans. 
In particular, we have 
$$
c_{p,q} = \sum_{W\in \mc{D}^w(p,q)} 1.
$$
\end{Theorem}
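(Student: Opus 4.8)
The plan is to prove Theorem~\ref{T:anotherpaththeorem} by building the bijection recursively, exploiting the fact that both sides obey the same three-term recurrence. On the clan side this is Proposition~\ref{P:First recurrence}: $c_{p,q}=c_{p-1,q}+c_{p,q-1}+2(p+q-1)c_{p-1,q-1}$ for $p,q\geq 1$, with $c_{p,0}=c_{0,q}=1$. On the path side, writing $w_{p,q}:=\#\mc{D}^w(p,q)$, I would first record the matching recurrence by conditioning on the terminal labeled step $K_r=(L_r,m_r)$ of a weighted path $W=K_1\cdots K_r\in\mc{D}^w(p,q)$: if $L_r$ is an $E$-step then $K_1\cdots K_{r-1}\in\mc{D}^w(p-1,q)$; if $L_r$ is an $N$-step then $K_1\cdots K_{r-1}\in\mc{D}^w(p,q-1)$; and if $L_r$ is a $D$-step then it is a $(p+q-1)$-diagonal step, hence admits $2(p+q-1)$ labels (equal to the weight of that step in Proposition~\ref{P:anotherpaththeorem}), while $K_1\cdots K_{r-1}\in\mc{D}^w(p-1,q-1)$. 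Since $\mc{D}^w(p,0)$ and $\mc{D}^w(0,q)$ are singletons, this yields $w_{p,q}=w_{p-1,q}+w_{p,q-1}+2(p+q-1)w_{p-1,q-1}$ with the same initial data; equivalently $w_{p,q}=\sum_{L\in\mc{D}(p,q)}\omega(L)$, which together with Proposition~\ref{P:First recurrence} already gives the enumerative half $c_{p,q}=\#\mc{D}^w(p,q)$. The real task is to upgrade this numerical coincidence to an explicit bijection $\Phi\colon\mc{D}^w(p,q)\to CII(p,q)$.

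To do this I would isolate the object-level decomposition of $CII(p,q)$ underlying Proposition~\ref{P:First recurrence} and Lemma~\ref{T:rec}. Passing to signed involutions, one reads off a three-way partition of $CII(p,q)$: a class whose members arise from a unique element of $CII(p-1,q)$ by a canonical ``extend by one positive/negative terminal position'' operation $\varepsilon_E$; a parallel class built from $CII(p,q-1)$ by an operation $\varepsilon_N$; and a class whose members carry a distinguished $2$-cycle involving the terminal coordinates whose deletion returns a unique element of $CII(p-1,q-1)$, with exactly $2(p+q-1)$ choices of ``placement together with the interleaved sign data'' for the reinserted $2$-cycle --- this being the factor in (\ref{A:recurrence1}), and matching the $2(p+q-1)$ labels of a terminal $D$-step. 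I would then define $\Phi$ by induction on $p+q$: it is forced on the singletons $CII(p,0)$, $CII(0,q)$; otherwise, given $W=K_1\cdots K_r$ with $W'=K_1\cdots K_{r-1}$, set $\Phi(W)=\varepsilon_E(\Phi(W'))$, $\varepsilon_N(\Phi(W'))$, or the class-$D$ clan determined by $\Phi(W')$ and the label $m_r$, according as $L_r$ is $E$, $N$, or $D$. That $\Phi$ is a bijection then follows because each branch is, by the inductive hypothesis composed with the relevant piece of the clan decomposition, a bijection onto its class, while the three classes partition $CII(p,q)$ and the three terminal-step types partition $\mc{D}^w(p,q)$.

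The hard part will be the $D$-branch: making the correspondence between the $2(p+q-1)$ labels of a terminal $(p+q-1)$-diagonal step and the $2(p+q-1)$ reinsertions of the distinguished $2$-cycle into a $CII(p-1,q-1)$ clan both explicit and provably bijective. This forces one to unpack the precise (in the paper, deferred) definition of ssymmetric clans --- which pairs of positions an added $2$-cycle may occupy, and which sign conventions on the interleaved fixed points are admissible --- so that exactly $2(p+q-1)$ legal reinsertions occur and one can fix a canonical linear order on them to be hit by $m_r=2,3,\dots$ in turn. One must also verify that the three clan classes are genuinely disjoint and exhaustive as sets of objects, not merely that their cardinalities add up; this is precisely where the combinatorics of the clan normal form has to do the work. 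As a fallback I would keep in reserve the route through signed involutions: since $CII(p,q)$ clans correspond to involutions with $k$ two-cycles, counted by $\gamma_{k,p,q}=\tfrac{(p+q)!}{k!(p-k)!(q-k)!}$ from (\ref{A:gamma kqp1}), one can instead match the positions of the $k$ diagonal steps of a weighted path with the pairing pattern of the $k$ two-cycles, the $E$- and $N$-runs with the signed fixed points, and the labels on the diagonal steps with the internal data of the two-cycles, then check that the refined counts agree level by level in $k$.
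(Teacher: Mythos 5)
Your proposal is correct and follows essentially the same route as the paper: the paper also argues by induction on $p+q$, peeling off the terminal step, matching an $E$/$N$ step with terminal fixed points of the signed involution labelled $+$/$-$ (at positions $1$ and $2n$), and matching a terminal $D$ step with the $2(p+q-1)$ choices of the partner $i\in\{2,\dots,2n-1\}$ of $2n$ in a $2$-cycle (whose mirror $(1,2n+1-i)$ is forced by symmetry), which is exactly the explicit label-to-reinsertion correspondence you flag as the hard part. The only caveat is an inconsistency internal to the paper rather than to your plan: the stated label range $2\le m_i\le 2k-1$ for a $k$-diagonal step gives $2k-2$ labels, whereas both the paper's proof and your count use the intended $2k=2(p+q-1)$ labels for the terminal diagonal step.
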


There is much more to be said about the lattice 
path interpretation of the number of Borel orbits 
in $\mathbf{Sp}_n/\mathbf{Sp}_p\times \mathbf{Sp}_q$
but we postpone them to a future paper. 
We finish our introduction by giving a brief outline of our paper.
We divided our paper into two parts. Before 
starting the first part, in Section~\ref{S:NP} 
we introduce the background material and notation that we use in the sequel.
In particular, we review a bijection between clans and involutions
and we introduce the symmetric $(2p,2q+1)$ clans as well as ssymmetric
$(2p,2q)$ clans. 
We start Part I by analyzing the numbers $\beta_{k,p,q}$.
In Section~\ref{SS:kpairs} we prove our Theorem~\ref{T:first}
and in the following Section~\ref{SS:Recsforbetas} we derive
aforementioned recurrences for $\beta_{k,p,q}$'s.
We devoted Section~\ref{SS:proof1} to the proof of
Theorem~\ref{T:second}. 
The Part II of our paper starts with an analysis of 
the numbers $\gamma_{k,p,q}$. 
In Section~\ref{SS:Symplectickpairs}, we prove 
the formula of $\gamma_{k,p,q}$'s as 
given in (\ref{A:gamma kqp1}). 
In the following Section~\ref{SS:Recsforgammas},
by developing the recurrences for these numbers
we prove Proposition~\ref{P:First recurrence}.
Section~\ref{SS:proofofmain2} is devoted to the proof
of Theorem~\ref{T:main2}. In particular, we point out in 
Remark~\ref{R:expensive} that it is possible to find a formula 
for the generating function of $c_{p,q}$'s 
at the expense of a very complicated expression.
In the remaining of Part II, we investigate the combinatorial 
interpretations of $c_{p,q}$'s. In Section~\ref{S:Interpretation},
we prove Proposition~\ref{P:anotherpaththeorem}
and Theorem~\ref{T:anotherpaththeorem}. Finally, 
in the appendix, which is Section~\ref{S:Appendix}, 
we present our methods for solving ODE (\ref{A:mainODE}).

\section{Notation and preliminaries}\label{S:NP}

The notation $\N$ stands for the
set of natural numbers, which includes 0.
Let us treat $+$ and $-$ as 
two symbols rather than viewing 
them as arithmetic operations.
Throughout our paper the notation 
$\PP$ stands for the set $\{ +,-\}\cup \N$. 
The elements of $\PP$ are called symbols. 
When we want to make a distinction
between the symbols $\pm$ and the elements 
of $\N$, we call the latter by numbers, 
following their usual trait.

Let $n$ be a positive integer. 
The symmetric group of permutations on 
$[n]:=\{1,\dots, n\}$ is denoted by $S_n$. 
If $\pi \in S_n$, then its {\em one-line} notation 
is the string $\pi_1\pi_2\dots \pi_n$,
where $\pi_i = \pi(i)$ ($i=1,\dots, n$). 
We trust that our reader is familiar with
the most basic terminology about 
permutations such as their cycle 
decomposition, cycle type, etc.. 
However, just in case, let us mention that 
the cycle decomposition $C_1\cdots C_r$
of a permutation 
is called standard if the entries of $C_i$'s
are arranged in such a way that 
$c_1< c_2 < \cdots < c_r$, where $c_i$ 
is the smallest number that appears in $C_i$
($i=1,\dots, r$). Here, we followed the common 
assumption that each cycle has at least two 
entries. Since we need the data of 
fixed points of a permutation, we will append to the cycle decomposition
$C_1\cdots C_r$ the one-cycles in 
an increasing order without using 
parentheses as indicated in the following example.
\begin{Example}
$(2,6,8)(4,5,7,9)13$ is the standard cycle 
decomposition of the permutation $\pi$ from $S_9$
whose one-line notation is given by $\pi=163578924$.
\end{Example}

Clearly, a permutation $\pi$ is an involution,
that is to say $\pi^2=id$, if and only if every
cycle of $\pi$ is of length at most 2.

\begin{Definition}
Let $p$ and $q$ be two positive integers and set 
$$n:=p+q.$$
Suppose that $p>q$. A $(p,q)$ preclan,
denoted by $\gamma$, is a string  
of symbols such that 
\begin{enumerate}
\item there are $p-q$ more $+$'s than $-$'s;
\item if a number appears in $\gamma$, 
then it appears exactly twice. 
\end{enumerate} 
In this case, we call $n$ the order of $\gamma$. 
For example, $1221$ is a $(2,2)$ preclan of order 4
and $+1+++-1$ is a $(5,2)$ preclan of order 7. 
We call two $(p,q)$ preclans $\gamma$ and $\gamma'$ 
equivalent if the positions of the matching numbers are the 
same in both of them. 
For example, $\gamma:=1221$ and $\gamma':=2112$ are 
in the same equivalence class of $(2,2)$ preclans 
since both of $\gamma$ and $\gamma'$ have matching 
numbers in the positions $(1,4)$ and $(2,3)$. 
Finally, we call an equivalence class of $(p,q)$ preclans
a $(p,q)$ clan. 
\end{Definition}

In most places in our paper, we will 
abuse the notation and represent 
the equivalence class of a preclan 
$\gamma$ by $\gamma$ also, however,  
it is sometimes useful not to do that. 
When we need to distinguish between 
the preclan and its equivalence
class we will use $\gamma$
and $[\gamma]$, respectively,
for the preclan and 
its equivalence class.
The order of a clan is
defined in the obvious way
as the order of any preclan
that it contains.

\begin{Lemma}\label{L:clanstoinvolutions}
There exists a surjective map from
the set of clans of order $n$ to the 
set of involutions in $S_n$, the symmetric 
group of permutations on $\{1,\dots, n\}$.
\end{Lemma}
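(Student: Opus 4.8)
The plan is to write the map down explicitly at the level of preclans, check that it factors through the equivalence relation, verify that its image is an involution, and then reverse-engineer a preclan for an arbitrary prescribed involution.

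First I would fix $n$ and start from a $(p,q)$ preclan $\gamma=\gamma_1\gamma_2\cdots\gamma_n$ with $p+q=n$. Define $\sigma_\gamma\in S_n$ by setting $\sigma_\gamma(i)=j$ and $\sigma_\gamma(j)=i$ whenever $i<j$ and $\gamma_i=\gamma_j$ is a number, and $\sigma_\gamma(i)=i$ whenever $\gamma_i\in\{+,-\}$. Since every number that occurs in a preclan occurs exactly twice, the position-pairs $\{i,j\}$ carrying equal numbers are pairwise disjoint, so $\sigma_\gamma$ is a genuine permutation, and being a product of disjoint transpositions it satisfies $\sigma_\gamma^2=\mathrm{id}$, i.e. it is an involution. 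The key observation is that $\sigma_\gamma$ is determined entirely by the set $\{\,(i,j):\gamma_i=\gamma_j\in\N,\ i<j\,\}$ of matching positions; it does not see the signs, nor the actual values of the numbers. That set is precisely the datum preserved by the equivalence relation defining a clan, so $\gamma\mapsto\sigma_\gamma$ descends to a well-defined assignment on $(p,q)$ clans of order $n$, and since a clan determines its pair $(p,q)$, taking the union over all $(p,q)$ with $p+q=n$ yields a single well-defined map $\Phi$ from clans of order $n$ to involutions in $S_n$.

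For surjectivity I would take an arbitrary involution $\sigma\in S_n$, record its nontrivial cycles as disjoint transpositions $(i_1\,j_1),\dots,(i_k\,j_k)$ with $i_m<j_m$, and let $F=[n]\setminus\{i_1,j_1,\dots,i_k,j_k\}$ be its fixed-point set, so that $|F|=n-2k$. I would then form a preclan by writing the symbol $m$ in positions $i_m,j_m$ for $m=1,\dots,k$ and filling the positions of $F$ with $+$'s and $-$'s; once $p,q$ are chosen with $p+q=n$, the preclan necessarily has $p-k$ plus signs and $q-k$ minus signs, which is legitimate precisely when $k\le\min(p,q)$. Because the number of $2$-cycles of any involution in $S_n$ is at most $\lfloor n/2\rfloor$, one can always pick an admissible pair $(p,q)$ (for instance $q=\lfloor n/2\rfloor$), and then the class of the preclan just constructed maps to $\sigma$ under $\Phi$. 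I expect this last matching of the forced sign-counts against the defining inequality on $(p,q)$ to be the only point requiring care (in particular whether one must allow $p=q$ to hit fixed-point-free involutions when $n$ is even); everything else is the bookkeeping behind the slogan that a clan is a matching together with a sign pattern, and the map merely forgets the signs and the labels on the numbers.
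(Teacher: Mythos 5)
Your proposal is correct and takes essentially the same route as the paper: the forward map sends the matching positions of a clan to transpositions and the signs to fixed points (which visibly depends only on the data preserved by the equivalence relation), and surjectivity is obtained exactly as in the paper by placing a pair of equal numbers at the positions of each $2$-cycle of a given involution and filling its fixed points with $+$'s and $-$'s so as to realize the required difference $p-q$. Your added bookkeeping --- choosing $(p,q)$ with $p+q=n$ so that the forced sign counts $p-k$ and $q-k$ are nonnegative, e.g. $q=\lfloor n/2\rfloor$, and the remark that one must allow $p=q$ to reach fixed-point-free involutions for even $n$ --- is a point the paper's proof leaves implicit, and it is consistent with the paper's own usage (e.g. the $(2,2)$ preclan $1221$), so it is a refinement rather than a divergence.
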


\begin{proof}
Let $\gamma = c_1\cdots c_n$ be a 
(pre)clan of order $n$. 
For each
pair of identical numbers $(c_i,c_j)$ with $i<j$ 
we have a transposition in $S_n$ which is defined 
by the indices, that is $(i,j)\in S_n$. Clearly, if $(c_i,c_j)$ and 
$(c_{i'},c_{j'})$ are two pairs of identical numbers
from $\gamma$, then $\{i,j\} \neq \{i',j'\}$. 
Now we define the involution $\pi=\pi(\gamma)$ corresponding to 
$\gamma$ as the product of all transpositions
that come from $\gamma$. 
Accordingly, the $\pm$'s in $\gamma$ correspond to 
the fixed points of the involution $\pi$.

Conversely, if $\pi$ is an involution 
from $S_n$, then we have a $(p,q)$ preclan 
$\gamma=\gamma(\pi)$ that is defined
as follows. We start with an empty string 
$\gamma = c_1\dots c_n$ 
of length $n$.
If $ \pi_1\dots \pi_n$ 
is the one-line notation for $\pi$, then
for each pair of numbers $(i,j)$ such that $1\leq i < j \leq n$
and $\pi_i=j$, $\pi_j=i$, we put $c_i = c_j = i$. 
Also, if $ i_1,\dots, i_m$ is the increasing list of 
indices such that $\pi_{i_j} = i_j$ ($j=1,\dots, m$),
then starting from $i_1$ place a $+$ until 
the difference between the number of 
$c_{i_j}$'s with a + and the number of empty places 
is $p-q$. At this point place a $-$ in each of 
the empty places. It is easy to check 
that $\gamma$ is a $(p,q)$ preclan of order $n$,
hence the proof follows. 
\end{proof}

\begin{Definition}
Let $p$ and $q$ be two positive integers with $p>q$
and let $n:=p+q$.
A signed $(p,q)$ involution is an involution 
$\pi$ from $S_n$ whose fixed points are
labeled either by $+$ or by $-$ in such a 
way that the number of $+$'s is $p-q$ more 
than the number of $-$'s. 
\end{Definition}

\begin{Lemma}\label{L:clanstoinvolutions2}
There is a bijection between 
the set of all $(p,q)$ clans and 
the set of all signed $(p,q)$ involutions.
\end{Lemma}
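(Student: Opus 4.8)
The plan is to upgrade the surjection of Lemma~\ref{L:clanstoinvolutions} to a bijection by fixing a canonical choice of labels on the fixed points, and then to check that the two maps $\gamma \mapsto \pi(\gamma)$ and $\pi \mapsto \gamma(\pi)$ constructed in the proof of Lemma~\ref{L:clanstoinvolutions} are mutually inverse when the target of the first map is replaced by the set of signed $(p,q)$ involutions. First I would make explicit what the map $\gamma \mapsto \pi(\gamma)$ should be in this refined setting: given a $(p,q)$ clan $[\gamma]$, pick any preclan representative $\gamma = c_1 \cdots c_n$, form the involution $\pi$ from the matching pairs of equal numbers exactly as before, and now \emph{also} record, for each index $i$ that is a fixed point of $\pi$, whether $c_i = +$ or $c_i = -$. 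This produces a signed $(p,q)$ involution because condition (1) in the definition of a preclan says there are $p-q$ more $+$'s than $-$'s, which is precisely the balancing condition in the definition of a signed $(p,q)$ involution. The key point to verify here is well-definedness: the assignment must not depend on the chosen representative $\gamma$ of $[\gamma]$. Two equivalent preclans have their matching numbers in the same positions, hence the same set of ``paired'' indices and the same set of fixed-point indices; and on a fixed-point index the symbol is a $\pm$ that is unaffected by relabeling the numbers, so the signed involution obtained is indeed independent of the representative.

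Next I would run the construction in the reverse direction, which is literally the second half of the proof of Lemma~\ref{L:clanstoinvolutions}, except that now the $\pm$ labels are \emph{given} by the signed involution rather than freely chosen: given a signed $(p,q)$ involution $\pi$, place $c_i = c_j = i$ for each $2$-cycle $(i,j)$ with $i<j$, and on each fixed point $i$ of $\pi$ place the symbol ($+$ or $-$) that labels $i$ in $\pi$. The balancing hypothesis on the signs guarantees the resulting string is a genuine $(p,q)$ preclan, and we pass to its equivalence class to get a $(p,q)$ clan. I would then check the two compositions. Starting from a signed involution $\pi$, building $\gamma(\pi)$ and reading off $\pi(\gamma(\pi))$: the $2$-cycles are recovered because the positions of the matching numbers in $\gamma(\pi)$ are exactly the $2$-cycles of $\pi$, and the fixed-point labels are recovered because we copied them verbatim; so $\pi(\gamma(\pi)) = \pi$. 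Starting from a clan $[\gamma]$, forming $\pi(\gamma)$ and then $\gamma(\pi(\gamma))$: the preclan $\gamma(\pi(\gamma))$ has matching numbers in the same positions as $\gamma$ — namely at the $2$-cycles of $\pi(\gamma)$ — and identical $\pm$ symbols at the fixed-point positions, so $\gamma(\pi(\gamma))$ is equivalent to $\gamma$, i.e. $[\gamma(\pi(\gamma))] = [\gamma]$.

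The main obstacle, such as it is, is purely bookkeeping: one must be careful that in the old Lemma~\ref{L:clanstoinvolutions} the target involutions carried no sign data and the fixed-point labels were chosen by a greedy rule (``place $+$'s until the deficit is $p-q$''), whereas here the whole content is that the labels are \emph{not} chosen but are part of the data on both sides, so the greedy rule is dropped and the correspondence on fixed points becomes the identity on $\pm$ symbols. Once that shift of viewpoint is made, no computation is needed — it is a matter of observing that ``positions of matching numbers'' is exactly the same information as ``the set of $2$-cycles'' and that ``equivalence of preclans'' was defined precisely so as to quotient out everything except that information together with the $\pm$ pattern. I would also note in passing that this bijection restricts to one between $(p,q)$ clans whose associated involution has exactly $k$ two-cycles and signed $(p,q)$ involutions with exactly $k$ two-cycles, which is the refinement used elsewhere in the paper when counting by the statistic $k$.
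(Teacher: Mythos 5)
Your proposal is correct and follows essentially the same route as the paper: it takes the surjection of Lemma~\ref{L:clanstoinvolutions} and records the $\pm$ symbols of the clan as labels on the fixed points of the involution, which is exactly the paper's construction of $\widetilde{\varphi}$. You simply spell out more explicitly the inverse map and the verification that the two compositions are identities, details the paper leaves implicit.
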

\begin{proof}
Let $\varphi$ denote the surjection that 
is constructed in the proof of Lemma~\ref{L:clanstoinvolutions}.
We modify $\varphi$ as follows. Let $\gamma=c_1\dots c_n$
be a $(p,q)$ clan and let $\pi = \varphi(\gamma)$ 
denote involution that is obtained from $\gamma$ via $\varphi$. 
If an entry $c_i$ of $\gamma$ is a $\pm$, then we know that $i$ is a fixed 
point of $\pi$. We label $i$ with $\pm$. Repeating this 
procedure for each $\pm$ that appear in $\gamma$
we obtain a signed $(p,q)$ involution $\widetilde{\pi}$. 
Clearly $\widetilde{\pi}$ is uniquely determined by 
$\gamma$. Therefore, 
the map defined by $\widetilde{\varphi} (\gamma) = \widetilde{\pi}$
is a bijection. 
\end{proof}

Let $\gamma$ be a 
preclan of the form $\gamma=c_1\cdots c_n$.
The {\em reverse} of $\gamma$,
denoted by $rev(\gamma)$,
is the preclan 
$$
rev(\gamma) = c_nc_{n-1}\cdots c_1.
$$
Now we are ready to define 
the notion of a symmetric clan. 

\begin{Definition}
A $(p,q)$ clan $\gamma$ is called symmetric if 
$[\gamma ] =  [ rev(\gamma)]$.
\end{Definition}

\begin{Example}
The $(4,3)$ clan $\gamma = (1 2 + - + 1 2)$ is symmetric 
since the clan $(2 1 + - + 2 1)$ which is obtained from $\gamma$ 
by reversing its symbols is equal $\gamma$ as a clan. 
More explicitly, they are the same since both of them
have the same matching numbers in the positions $1,6$ and $2,7$.
\end{Example}

In our next example, we list all symmetric $(4,3)$ clans. 
\begin{Example}
\begin{align*}
\{{}&  123,+321,12+-+21, 123+312, 312+123, 12+-+12, 1+2-2+1, 132+132, 311+223,\\
& 1+2-1+2, +12-21+, 1+-+-+1, 1,3,1+,2,3,2, 1-+++-1, 1+1-2+2,\\ 
&+12-12+, +1-+-1+,113+322, -1+++1-, +11-22+, 11+-+22,\\
& +-1+1-+, -+1+1+-, -++-++-,  +-+-+-+, ++---++ \}.
\end{align*}
\end{Example}

\begin{Definition}
An ssymmetric $(2p,2q)$ clan 
$\gamma = c_1 \dots c_{2n}$ is a symmetric 
clan such that $c_i \neq c_{2n+1-i}$ 
whenever $c_i$ is a number (that is $c_i$ is not a sign).
The cardinality of the set of all ssymmetric $(2p,2q)$ 
clans is denoted by $c_{p,q}$.
\end{Definition}
  
\begin{Example}
The set of all ssymmetric $(4,2)$-clans is given by 
\begin{align*}
\{{}&  12++12, 1+21+2, 1+12+2,+1212+,  +1122+, 11++22,\\
& -++++-,  +-++-+, ++--++ \}.
\end{align*}
\end{Example}

We finish our preliminaries section
by a remark/definition.
\begin{Remark}\label{R:matchingvspairs}
Let $\pi$ denote the signed $(p,q)$ involution
corresponding to a $(p,q)$ clan $\gamma$ under the 
bijection $\widetilde{\varphi}$ that is defined 
in the proof of Lemma~\ref{L:clanstoinvolutions2}. 
A matching pair of numbers in any preclan that 
represents $\gamma$ corresponds to a 2-cycle 
of $\pi$. We will call $\gamma$ a $(p,q)$ clan
with $k$ pairs if $\pi$ has exactly $k$ 2-cycles. 
\end{Remark}

\section{Part I: Counting symmetric clans}\label{S:PartI}

\subsection{Symmetric $(2p, 2q+1)$-clans with $k$ pairs.}\label{SS:kpairs}

Let $p$ and $q$ be two positive integers 
such that $1\leq q < p$. Although we will be dealing with $(2p,2q+1)$ clans, we 
denote $p+q$ by $n$. Accordingly the 
number of symmetric $(2p,2q+1)$ clans is denoted by $b_{p,q}$. 
\vspace{.5cm}

By the proof of Lemma~\ref{L:clanstoinvolutions2} we know 
that there is a bijection, denoted by $\widetilde{\varphi}$, between 
the set of all $(2p,2q+1)$ clans of order $2n+1$ and the set of all signed
$(2p,2q+1)$ involutions in $S_{2n+1}$. We have 
a number of simple observations regarding this bijection. 

First of all, we observe that if $\pi$ is a signed $(2p,2q+1)$ involution
such that $\widetilde{\varphi}(\gamma)=\pi$, where 
$\gamma$ is a symmetric $(2p,2q+1)$ clan, then the following 
holds true:
\begin{itemize}
\item if $(i,j)$ with $1\leq i < j \leq 2n+1$ is 
a 2-cycle of $\pi$, then $n+1\notin \{i,j\}$ and 
$(2n+2-j, 2n+2-i)$ is a 2-cycle of $\pi$ also.
\end{itemize}
Secondly, we see from its construction
that $\widetilde{\varphi}$ maps a 
$(2p,2q+1)$ clan with $k$ pairs to 
a signed $(2p,2q+1)$ involution with 
$k$ 2-cycles. 
Let us denote the set of all such involutions by
$I^{\text{ort}}_{k,p,q}$ and we define $\beta_{k,p,q}$ as the cardinality 
$$
\beta_{k,p,q}:= | I^{\text{ort}}_{k,p,q} |.
$$ 
\begin{Remark}\label{R:observe1}
If $\pi \in I^{\text{ort}}_{k,p,q}$, then in the corresponding clan 
there are $2p-2q-1$ more $+$'s than $-$'s. 
Notice that the inequality $2p-2q-1 \leq 2p+2q+1-2k$ implies that $0 \leq k \leq 2q+1$.
\end{Remark}
It follows from the note in Remark~\ref{R:observe1} and
the fact that $\widetilde{\varphi}$ is a bijection, 
the number of symmetric $(2p,2q+1)$ clans is given by 
$$
b_{p,q} = \sum_{l=0}^{q} (\beta_{2l,p,q} + \beta_{2l+1,p,q}).
$$
Our goal in this section is to record a formula 
for $b_{p,q}$ that depends only on $p$ and $q$. 
To this end, first we determine the number 
of $\pm$'s in a symmetric $(2p,2q+1)$ clan.

\begin{Lemma}\label{L:L1}
If $\gamma = c_1\dots c_{2n+1}$ is a symmetric $(2p,2q+1)$ clan,
then either $c_{n+1}=+$ or $c_{n+1}=-$. 
\end{Lemma}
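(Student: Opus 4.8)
The plan is to use the defining symmetry of a symmetric clan, namely $[\gamma] = [rev(\gamma)]$, together with a parity/counting argument on the middle position. First I would observe that since $\gamma$ has order $2n+1$, which is odd, the position $n+1$ is the unique fixed center of the reversal map $c_i \mapsto c_{2n+2-i}$; every other position $i$ is paired with the distinct position $2n+2-i$. So the content of $\gamma$ splits into the "left half" $c_1\dots c_n$, the middle entry $c_{n+1}$, and the "right half" $c_{n+2}\dots c_{2n+1}$, with the reversal swapping the two halves (as strings, and reversing order). Since $\gamma$ is a symmetric clan, the matching-number structure is invariant under reversal.

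The key step is to rule out the possibility that $c_{n+1}$ is a number. Suppose $c_{n+1} = m$ for some $m \in \N$. By the preclan axiom, $m$ appears exactly twice in $\gamma$, so there is exactly one other position $j \neq n+1$ with $c_j = m$. Because $\gamma$ is symmetric, the pair of positions carrying $m$ must be sent to itself under $i \mapsto 2n+2-i$; but $n+1$ is fixed by this map, so $j$ would have to be fixed as well, forcing $j = n+1$, a contradiction. (Equivalently: if $c_{n+1}$ were part of a 2-cycle $(i, n+1)$ of the associated involution $\pi = \widetilde{\varphi}(\gamma)$, then by the first bulleted observation in this subsection $n+1 \notin \{i, n+1\}$, which is absurd.) Hence $c_{n+1}$ cannot be a number, and since the only symbols in $\PP$ that are not numbers are $+$ and $-$, we conclude $c_{n+1} \in \{+,-\}$.

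I would also want to double-check that this middle entry is well-defined on the equivalence class, i.e. that it does not depend on the choice of preclan representative: two equivalent preclans differ only in the labels of matching numbers, not in the positions of signs, so whether position $n+1$ holds a sign (and which sign) is an invariant of the clan $[\gamma]$. This makes the statement of the lemma meaningful.

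I do not anticipate a serious obstacle here; the only subtlety is being careful about the fixed-point structure of the reversal involution on an odd-length string and invoking the preclan axiom correctly (each number appears exactly twice). The argument via the associated involution $\pi$ and the already-recorded observation that 2-cycles avoid $n+1$ gives an even quicker route, so I would likely present that as the main line and mention the direct counting argument as an alternative.
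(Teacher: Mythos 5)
Your proposal is correct, but it proves the lemma by a different route than the paper. You rule out a number at the center directly: the reversal $i\mapsto 2n+2-i$ on a string of odd length $2n+1$ fixes only position $n+1$, so if $c_{n+1}$ were a number $m$, the unique matching pair $\{n+1,j\}$ through position $n+1$ would have to map to a matching pair $\{n+1,2n+2-j\}$, and uniqueness of the pair containing $n+1$ forces $j=2n+2-j$, i.e.\ $j=n+1$, a contradiction. The paper instead argues by counting: splitting into the cases $k=2l$ and $k=2l+1$, it solves $\alpha+\beta=2p+2q+1-2k$, $\alpha-\beta=2p-2q-1$ and finds that exactly one of the two signs occurs an odd number of times; since off-center signs pair up under the reversal symmetry, the odd-count sign must sit at the unique fixed position $n+1$. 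The paper's computation is what yields the stronger Corollary~\ref{C:L1} (which sign occupies the center, and the exact sign counts), which is used later in the proof of Theorem~\ref{T:first}; your argument is more elementary and proves the lemma as stated, but by itself it does not identify the sign. One caution: your preferred ``quicker route'' via the first bulleted observation ($n+1\notin\{i,j\}$ for any 2-cycle) is essentially a restatement of the lemma and is asserted in the paper without proof, so leaning on it alone would be circular; your direct fixed-point argument should be the main line (and in fact it furnishes a proof of that bullet). Your remark that the middle entry is well defined on the equivalence class is a nice touch, and the only compressed step worth spelling out is that position $n+1$ lies in exactly one matching pair, which is what lets you conclude the pair is sent to itself.
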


\begin{proof}
First, assume that $\gamma$ has even number of pairs. 
Let $k$ denote this number, $k=2l$. 
Let $\alpha, \beta$, respectively, denote the number of $+$'s and $-$'s 
in $\gamma$. 
Then we have 
$$
\alpha + \beta = 2p+2q+1-4l \qquad \text{and} \qquad \alpha - \beta = 2p - 2q -1 .
$$
It follows that 
$$
\alpha  = 2p-2l \qquad \text{and} \qquad \beta = 2q-2l+1,
$$
so, in $\gamma$ there are odd number of $-$'s and there are even number of $+$'s.
As a consequence we see that $c_{n+1}$ is a $-$. 

Next, assume that $\gamma$ has an odd number of pairs, that is $k=2l+1$. 
Arguing as in the previous case we see that 
there is an odd number of $+$'s, hence $c_{n+1}$ is a +.
This finishes the proof.
 \end{proof}

We learn from the proof of Lemma~\ref{L:L1} that 
it is important to analyze the parity of pairs, so we 
record the following corollary of the proof for a 
future reference.

\begin{Corollary}\label{C:L1}
Let $k$ denote the number of pairs in a symmetric 
$(2p,2q+1)$ clan $\gamma$. 
If $k=2l$ ($0 \leq l \leq q$),
then the number of $+$'s in $\gamma$ is $2(p-l)$.
If $k=2l+1$ ($0\leq l \leq q$), then 
the number of $-$'s in $\gamma$ is $2(q-l)+1$.
\end{Corollary}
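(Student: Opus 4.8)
The statement is a pure bookkeeping consequence of the computation already performed inside the proof of Lemma~\ref{L:L1}; the plan is just to isolate the two parity cases of $k$, and no genuinely new idea is required.

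First I would write down the two counting identities that hold for any $(2p,2q+1)$ clan $\gamma$ with exactly $k$ pairs (in fact the symmetry of $\gamma$ plays no role in the counts themselves). Since each of the $k$ matching pairs of numbers occupies two of the $2n+1$ positions of $\gamma$, the number of sign-entries of $\gamma$ equals $(2n+1)-2k=2p+2q+1-2k$; letting $\alpha$ and $\beta$ denote the numbers of $+$'s and of $-$'s in $\gamma$, this says $\alpha+\beta=2p+2q+1-2k$. By the defining property of a $(2p,2q+1)$ clan the $+$'s exceed the $-$'s by $2p-(2q+1)$, that is, $\alpha-\beta=2p-2q-1$. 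These are exactly the two equations that open the proof of Lemma~\ref{L:L1} in the case $k=2l$, and they are valid verbatim for every $k$.

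Next I would solve this $2\times 2$ linear system to get $\alpha=2p-k$ and $\beta=2q+1-k$, and then specialize $k$ to the two parities: for $k=2l$ the first identity gives the number of $+$'s in $\gamma$, while for $k=2l+1$ the second identity gives the number of $-$'s in $\gamma$, which are the two assertions of the corollary. The admissible range $0\le k\le 2q+1$ recorded in Remark~\ref{R:observe1} becomes precisely $0\le l\le q$ in either parity, and along the way one checks $\alpha>0$ (using $p>q$) and $\beta\ge 0$, so these counts are actually realized. The only points that call for any care---the excess of $+$'s over $-$'s being the \emph{odd} number $2p-2q-1$ rather than $2(p-q)$, and tracking the parity of $k$---have already been handled in the proof of Lemma~\ref{L:L1}, so there is no real obstacle here beyond careful transcription.
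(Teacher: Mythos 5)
Your derivation follows the paper's route exactly: the corollary is recorded there as an immediate byproduct of the computation inside the proof of Lemma~\ref{L:L1}, i.e.\ solving $\alpha+\beta=2p+2q+1-2k$ and $\alpha-\beta=2p-2q-1$, and your general formulas $\alpha=2p-k$, $\beta=2q+1-k$ are correct.

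The problem is your final step, where you assert without substituting that the two specializations ``are the two assertions of the corollary.'' They are not, as printed. For $k=2l+1$ your own formula gives $\beta=2q+1-(2l+1)=2(q-l)$, whereas the statement claims $2(q-l)+1$; the value $2(q-l)+1$ is the number of $-$'s in the \emph{even} case $k=2l$ (this is precisely what the proof of Lemma~\ref{L:L1} computes: $\alpha=2(p-l)$, $\beta=2(q-l)+1$ for $k=2l$, and an odd number $2(p-l)-1$ of $+$'s, hence $2(q-l)$ minus signs, for $k=2l+1$). A quick sanity check: the symmetric $(2,1)$ clan $1\,{+}\,1$ has $k=1$, $l=0$, and no $-$ at all, while the printed second clause would predict one. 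So either you have an off-by-one slip in the matching, or --- more likely --- the corollary as stated contains a typo that your proof should flag and correct: its second clause should give $2(q-l)$ minus signs (equivalently $2(p-l)-1$ plus signs), which is also the count the paper actually uses later through the binomial $\binom{n-(2l+1)}{p-(l+1)}$ in the proof of Theorem~\ref{T:first}. Claiming agreement with the statement without performing the substitution is the one genuine gap in your write-up.
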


Our next task is determining the number of 
possible ways of placing $k$ pairs to build from 
scratch a symmetric $(2p,2q+1)$ clan
$$
\gamma=c_1\cdots c_n c_{n+1} c_{n+2}\cdots c_{2n+1} \qquad \text{(with $c_{n+1}=\pm$)}.
$$
To this end we start with defining some interrelated 
sets.
\begin{align*}
I_{1,1} :=& \{((i,j),(2n+2-j, 2n+2-i))\;|\; 1 \leq i < j \leq n \}, \\
I_{1,2} :=& \{((i,j),(2n+2-j, 2n+2-i))\;|\; 1 \leq i < n+1 < j \leq 2n+1 \}, \\
I_1 :=& I_{1,1} \cup I_{1,2},\\
I_2 :=& \{(i,j) \;|\; 1 \leq i < n+1 < j \leq 2n+1, \; i+j = 2n+2 \}.
\end{align*}
We view $I_1$ as the set of placeholders for two distinct pairs 
that determine each other in $\gamma$. 
The set $I_2$ corresponds to the list of 
stand alone pairs in $\gamma$. In other words, 
if $(i,j)\in I_2$, then $c_i = c_j$ and $j=2n+1-i+1$.

\begin{Example}
Let us show what $I_1$ and $I_2$ correspond to with a concrete 
example. If $\gamma$ is the symmetric $(4,3)$ clan 
$$
\gamma = (7\; 2 +\, 0\; 8+\, 9 -\; 8 +\, 9\; 0+\,7\; 2),
$$ 
then $I_{1,1} = \{ ((1,14),(2,15))\}$, $I_{1,2}=\{((5,9),(7,11)) \}$,
$I_2= \{(4,10)\}$.
\end{Example}

If $(c_i,c_j)$ is a pair in the symmetric clan $\gamma$
and if $(i,j)$ is an element of $I_2$, then we call $(c_i,c_j)$
a pair of type $I_2$. 
If $x$ is a pair of pairs of the form $((c_i,c_j),(c_{2n+2-j},c_{2n+2-i}))$
in a symmetric clan $\gamma$ and if $((i,j),(2n+2-j,2n+2-i))\in I_{1,s}$ ($s\in \{1,2\}$),
then we call $x$ a pair of pairs of type $I_{1,s}$.
If there is no need for precision, then we will call $x$
a pair of pairs of type $I_1$.

Clearly, if $|I_1| = b$ and $|I_2| = a$, then $2b + a= k$ is 
the total number of pairs in our symmetric clan $\gamma$. 
To see in how many different ways these pairs of indices can 
be situated in $\gamma$, we start with choosing $k$ spots
from the first $n$ positions in $\gamma = c_1\cdots c_{2n+1}$. 
Obviously this can be done in ${n\choose k}$ many different ways.
Next, we count different ways of choosing $b$ pairs 
within the $k$ spots to place the $b$ pairs of pairs of 
type $I_1$. This number of possibilities
for this count is ${k\choose {2b}}$. 
Observe that choosing a pair from $I_1$ is equivalent to choosing 
$(i,j)$ for the pairs of pairs in $I_{1,1}$ and choosing $(i, 2n+2-j)$ for the 
pairs of pairs in $I_{1,2}$.
More explicitly, we first choose $b$ pairs among the $2b$ elements 
and then place them on $b$ spots; this can be done in 
${{2b} \choose b} b!$ different ways.
Once this is done, 
finally, the remaining spots will be filled by the $a$ pairs 
of type $I_2$. This can be done in only one way. 
Therefore, in summary, the number of different ways 
of placing $k$ pairs to build a symmetric 
$(2p,2q+1)$ clan $\gamma$ is given by 
\begin{align*}
{n \choose k} \sum_{b=0}^{  \floor*{\frac{k}{2}}} {k \choose 2b}{2b \choose b} b!, 
\hspace{.5cm} \text{ or equivalently, }  \hspace{.5cm}
{n \choose k} \sum_{b=0}^{  \floor*{\frac{k}{2}}} {k \choose 2b} \frac{(2b)!}{b!}.
\end{align*}

In conclusion, we have the following preparatory result. 

\begin{Theorem}[Theorem~\ref{T:first}]
The number symmetric $(2p,2q+1)$ clans with $k$ pairs is given by 
\begin{align}\label{OS:first formula}
\beta_{k,p,q} =
\begin{cases}
 {{n-2l} \choose {p-l} }{n \choose 2l} a_{2l} & \text{ if } \; k=2l; \\
  {{n-(2l+1)} \choose {p-(l+1)}} {n \choose 2l+1} a_{2l+1}& \text{ if } \; k=2l+1,
\end{cases}
\end{align}
where 
\begin{align}\label{A:formulafora}
a_{2l} := \sum_{b=0}^{l} {2l \choose 2b} \frac{(2b)!}{b!}
\hspace{.75cm} \text{  and }  \hspace{.75cm}
a_{2l+1} := \sum_{b=0}^{l} {{2l+1} \choose 2b} \frac{(2b)!}{b!}.
\end{align}
Consequently, the total number of symmetric 
$(2p,2q+1)$ clans is given by
$$
b_{p,q} = 
\sum_{l=0}^q  \left\lbrack {{n-2l} \choose {p-l} }{n \choose 2l} a_{2l}
+  {{n-(2l+1)} \choose {p-(l+1)}} {n \choose 2l+1}a_{2l+1}\right\rbrack.
$$
\end{Theorem}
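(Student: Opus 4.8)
The plan is to count symmetric $(2p,2q+1)$ clans with exactly $k$ pairs by separating the combinatorial data into two independent choices: (a) the positions in $\gamma = c_1 \cdots c_{2n+1}$ occupied by the $k$ matching pairs, together with the matching pattern among them; and (b) the placement of the $\pm$ signs in the remaining positions. Since $\widetilde{\varphi}$ is a bijection, it suffices to count signed $(2p,2q+1)$ involutions with $k$ 2-cycles subject to the reverse-symmetry condition, which I have already translated (in the discussion preceding the statement) into the language of the placeholder sets $I_1 = I_{1,1} \cup I_{1,2}$ and $I_2$. The count of step (a) was carried out in the text: writing $k = 2b + a$, where $b = |I_1|$ counts pairs-of-pairs of type $I_1$ and $a = |I_2|$ counts stand-alone pairs of type $I_2$, the number of ways to place these among the first $n$ positions is
\[
\binom{n}{k} \sum_{b=0}^{\lfloor k/2 \rfloor} \binom{k}{2b} \frac{(2b)!}{b!},
\]
and the inner sum is precisely $a_k$ as defined in \eqref{A:formulafora} (with $k = 2l$ giving $a_{2l}$ and $k = 2l+1$ giving $a_{2l+1}$; note the upper limit $\lfloor k/2 \rfloor$ equals $l$ in both parities).

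Next I would handle step (b). By Corollary~\ref{C:L1}, once $k$ is fixed the number of $+$'s and $-$'s is completely determined: if $k = 2l$ there are $2(p-l)$ plus signs among the $2n+1 - 4l$ sign-positions, and if $k = 2l+1$ there are $2(q-l)+1$ minus signs. So the sign placement contributes a single binomial coefficient counting which of the free positions receive a $+$. The remaining point is bookkeeping: after removing the $k$ pairs, how many sign-positions remain, and — crucially — reverse-symmetry forces the sign string to be a palindrome, so the signs are determined by their values on positions $1, \dots, n$ together with the forced middle symbol $c_{n+1}$ from Lemma~\ref{L:L1}. Thus the free sign choices live among the at most $n$ positions in the first half not occupied by a pair-index, and a short count shows this number is $n - k$ in the case $k = 2l$ with $p - l$ of them to be assigned $+$, giving $\binom{n - 2l}{p - l}$; and in the case $k = 2l+1$, after accounting for the forced central $+$, one is left with $\binom{n - (2l+1)}{p - (l+1)}$. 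Multiplying the step-(a) count $\binom{n}{k} a_k$ by the step-(b) count yields exactly \eqref{OS:first formula}.

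The main obstacle — and the step deserving the most care — is the palindrome/parity accounting in step (b): one must verify that the pair-indices in the first half, the pair-indices straddling position $n+1$ (type $I_{1,2}$), and the stand-alone pairs (type $I_2$, which by definition satisfy $i + j = 2n+2$ and hence occupy one slot in each half) together remove exactly $k$ positions from the first $n$, so that $n - k$ sign-positions remain free in the first half; and then that the determined count of $+$'s, $2(p-l)$ (resp. the determined count of $-$'s, $2(q-l)+1$), distributes correctly over first half plus forced center. Getting the off-by-one right in the odd case (where $c_{n+1} = +$ is forced and must be subtracted before choosing the remaining $+$'s) is where an error would most naturally creep in; the even case is cleaner because $c_{n+1} = -$ does not interfere with the count of $+$'s. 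Once \eqref{OS:first formula} is established, the formula for $b_{p,q}$ follows immediately by summing over $l = 0, \dots, q$ and using $b_{p,q} = \sum_{l=0}^q (\beta_{2l,p,q} + \beta_{2l+1,p,q})$, which was recorded just before the statement.
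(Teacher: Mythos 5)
Your proposal is correct and follows essentially the same route as the paper: the pair-placement count $\binom{n}{k}a_k$ from the discussion preceding the theorem, combined with the sign count via Lemma~\ref{L:L1}/Corollary~\ref{C:L1} and the palindrome symmetry restricting attention to the first $n$ positions (with the forced central sign handled separately in the odd case), yielding $\binom{n-2l}{p-l}$ and $\binom{n-(2l+1)}{p-(l+1)}$ exactly as in the paper's proof. Your explicit check that the $k$ pairs occupy exactly $k$ of the first $n$ positions is a slightly more careful bookkeeping of a point the paper leaves implicit, but it is the same argument.
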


\begin{proof}
As clear from the statement of our theorem, we 
will consider the two cases where $k$ is even and where $k$ is odd separately. 
We already computed the numbers of possibilities for placing 
$k$ pairs, which are given by $a_{2l}$ and $a_{2l+1}$,
 but we did not finish counting the number of possibilities
for placing the signs. 

\begin{enumerate}
\item $k = 2l$ for $0 \leq l \leq q$. In this case, by Lemma \ref{L:L1},
we see that the number of $+$ signs is $\alpha:= 2p -2l=2(p-l)$.
Notice that because of symmetry condition it is enough to focus 
on the first $n$ spots to place $\pm$ signs. 
Thus, there are ${n - 2l  \choose p-l }$ possibilities to place $\pm$ signs.

\item $k=2l+1$ for $0 \leq l \leq q$. In this case, it follows from 
Lemma \ref{L:L1} that the entry in the $(n+1)$-th place is $+$. 
By using an argument as before, we see that 
there are ${ n - (2l +1)  \choose p-(l+1)}$ possibilities to place $\pm$ signs. 
\end{enumerate}

This finishes the proof. 
\end{proof}

The formula for $b_{p,q}$ that is derived in Theorem~\ref{T:first}
is not optimal in the sense that it is hard to write down a closed 
form of its generating function this way. Of course, the complication
is due to the form of  $\beta_{k,p,q}$, where $k$ is even or odd. 
Both of the cruces are resolved by considering the recurrences;
we will present our results in the next subsection.

\subsection{Recurrences for $\beta_{k,p,q}$'s.}\label{SS:Recsforbetas}

We start with some easy recurrences.
\begin{Lemma}\label{L:4identity}
Let $p$ and $q$ be two positive integers, 
and $l$ be a nonnegative integer. 
In this case, whenever both sides of the following equations
are defined, they hold true:
\begin{align}\label{A:simplerecs}
\beta_{2l,p-1,q} &= \frac{p-l}{p+q} \beta_{2l,p,q},	\\ 
\beta_{2l,p,q-1} &= \frac{q-l}{p+q} \beta_{2l,p,q},	\\
\beta_{2l+1,p-1,q} &= \frac{p-l-1}{p+q} \beta_{2l+1,p,q},	\\
\beta_{2l+1,p,q-1} &= \frac{q-l}{p+q}\beta_{2l+1,p,q}.
\end{align}
\end{Lemma}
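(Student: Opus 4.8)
The strategy is to verify each identity directly from the explicit formula for $\beta_{k,p,q}$ in Theorem~\ref{T:first}, since the right-hand sides are just products of binomial coefficients and the quantities $a_{2l}$ or $a_{2l+1}$, and the claimed relations only change $p$ or $q$ by one while leaving $k$ (hence $l$ and the parity) fixed. Crucially, the factors $a_{2l}$ and $a_{2l+1}$ depend only on $l$, so they cancel from both sides and the whole lemma reduces to elementary identities among binomial coefficients. Throughout I keep in mind that $n = p+q$, so decreasing $p$ (resp.\ $q$) by one decreases $n$ by one, and I must track this in both binomial factors.

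First I would treat the even case $k = 2l$. Here $\beta_{2l,p,q} = \binom{n-2l}{p-l}\binom{n}{2l} a_{2l}$ with $n = p+q$. For the first identity, replacing $p$ by $p-1$ turns this into $\binom{n-1-2l}{p-1-l}\binom{n-1}{2l} a_{2l}$. So I need $\binom{n-1-2l}{p-1-l}\binom{n-1}{2l} = \frac{p-l}{p+q}\binom{n-2l}{p-l}\binom{n}{2l}$. Using $\binom{n-1}{2l} = \frac{n-2l}{n}\binom{n}{2l}$ and $\binom{n-2l}{p-l} = \frac{n-2l}{p-l}\binom{n-1-2l}{p-1-l}$ (the latter from the absorption identity $\binom{m}{j} = \frac{m}{j}\binom{m-1}{j-1}$ with $m = n-2l$, $j = p-l$), the right-hand side becomes $\frac{p-l}{n}\cdot\frac{n-2l}{p-l}\binom{n-1-2l}{p-1-l}\cdot\frac{n-2l}{n}\binom{n}{2l}$; I then simplify and match it against the left-hand side. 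For the second identity (decreasing $q$), the same formula gives $\binom{n-1-2l}{p-l}\binom{n-1}{2l} a_{2l}$ on the left, and now I use $\binom{n-1}{2l} = \frac{n-2l}{n}\binom{n}{2l}$ together with the other direction of Pascal/absorption, namely $\binom{n-2l}{p-l} = \frac{n-2l}{(n-2l)-(p-l)}\binom{n-1-2l}{p-l} = \frac{n-2l}{q-l}\binom{n-1-2l}{p-l}$, to produce the factor $\frac{q-l}{p+q}$.

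Next I would handle the odd case $k = 2l+1$, where $\beta_{2l+1,p,q} = \binom{n-(2l+1)}{p-(l+1)}\binom{n}{2l+1} a_{2l+1}$. The computations are structurally identical to the even case with $2l \mapsto 2l+1$ and $p-l \mapsto p-l-1$, and $a_{2l+1}$ again cancels. For the third identity I apply the absorption identity to $\binom{n-(2l+1)}{p-(l+1)}$ in the "numerator" direction to extract $\frac{p-l-1}{p+q}$; for the fourth I use the complementary direction, noting that $(n-(2l+1)) - (p-(l+1)) = q - l$, which yields the factor $\frac{q-l}{p+q}$. In each of the four cases I should state explicitly the hypotheses under which both sides are defined (e.g.\ $p > l$, $q \geq l$, etc.), matching the phrase "whenever both sides are defined." I do not expect any genuine obstacle here: the only thing to be careful about is bookkeeping — correctly distinguishing which of $p$ and $q$ appears in the "$\binom{n-2l}{p-l}$" factor versus only through $n$ — so the main (minor) point of attention is to apply the binomial absorption identity $\binom{m}{j} = \frac{m}{m-j}\binom{m-1}{j}$ in the correct orientation in each of the four sub-cases.
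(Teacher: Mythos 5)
Your proposal is correct and matches the paper's own argument: the paper also proves Lemma~\ref{L:4identity} by combining the explicit formulas of Theorem~\ref{T:first} with elementary binomial identities, noting that $a_{2l}$ and $a_{2l+1}$ depend only on $l$ and cancel. The absorption-identity bookkeeping you describe (with $n=p+q$ and the complementary index $q-l$) yields exactly the four stated ratios.
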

The proofs of the identities in Lemma~\ref{L:4identity} 
follow from obvious binomial identities and 
our formulas in Theorem~\ref{T:first}. 
But note that $l$ does not change in them. 
In the sequel, we will find other recurrences that 
run over $l$'s. Towards this end, the following lemma,
whose proof is simple, will be useful.

\begin{Lemma}
Let $a_k$ denote the numbers as in (\ref{A:formulafora}).
If $k\geq 2$, then we have 
\begin{align}\label{A:eorec}
a_k = a_{k-1} + 2 (k -1) a_{k-2}.
\end{align}
\end{Lemma}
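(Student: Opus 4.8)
The plan is to first collapse the case distinction in the definition. For every $k\ge 0$ one has the single formula
$$a_k=\sum_{b\ge 0,\ 2b\le k}\binom{k}{2b}\frac{(2b)!}{b!},$$
because the constraint $2b\le 2l$ (resp.\ $2b\le 2l+1$) is the same as $b\le l$ in both parities, so this uniform expression agrees with both lines of (\ref{A:formulafora}). Once (\ref{A:eorec}) is written out in terms of this uniform sum, it becomes amenable to either a bijective or a short algebraic argument, and I would present the bijective one as the main proof.

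For the combinatorial route I would reinterpret $a_k$ as the number of configurations on $[k]=\{1,\dots,k\}$ obtained by choosing an even-sized subset $S\subseteq[k]$, a perfect matching of $S$, and an orientation of each matched edge, leaving $[k]\setminus S$ unmatched; call these oriented partial matchings of $[k]$. For $|S|=2b$ there are $\binom{k}{2b}$ choices of $S$ and $\binom{2b}{b}\,b!=\frac{(2b)!}{b!}$ ways to orient-and-match $S$ (pick the $b$ "heads", then biject heads to tails), which reproduces the uniform formula. The recurrence then follows by conditioning on the element $k$: if $k$ is unmatched we get an oriented partial matching of $\{1,\dots,k-1\}$, contributing $a_{k-1}$; if $k$ is matched, there are $k-1$ choices of partner, $2$ choices of orientation, and $a_{k-2}$ oriented partial matchings on the remaining $k-2$ elements, contributing $2(k-1)a_{k-2}$. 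Adding the two cases gives $a_k=a_{k-1}+2(k-1)a_{k-2}$.

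As a fallback (or a remark) I would also record the purely algebraic derivation: Pascal's rule gives $\binom{k}{2b}-\binom{k-1}{2b}=\binom{k-1}{2b-1}$, hence $a_k-a_{k-1}=\sum_{b\ge 1}\binom{k-1}{2b-1}\frac{(2b)!}{b!}$; then the identities $\frac{(2b)!}{b!}=2(2b-1)\frac{(2b-2)!}{(b-1)!}$ and $(2b-1)\binom{k-1}{2b-1}=(k-1)\binom{k-2}{2b-2}$, followed by the reindexing $c=b-1$, collapse the right-hand side to $2(k-1)\sum_{c\ge 0}\binom{k-2}{2c}\frac{(2c)!}{c!}=2(k-1)a_{k-2}$. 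There is no real obstacle here; the only points requiring care are the summation ranges in the small cases $k=2,3$ (so that $a_{k-2}$ is read with $a_0=a_1=1$) and the one-line verification $\binom{2b}{b}b!=(2b)!/b!$, which underpins the bijective count.
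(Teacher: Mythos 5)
Your proposal is correct. The paper states this lemma with no written proof at all (it is introduced as a lemma ``whose proof is simple''), so there is no argument in the text to compare against; what you supply fills that gap, and both of your routes are complete and valid. The unification of the two parity cases into the single formula $a_k=\sum_{b\ge 0,\,2b\le k}\binom{k}{2b}\frac{(2b)!}{b!}$ is legitimate, since $2b\le 2l+1$ forces $b\le l$ exactly as $2b\le 2l$ does. Your combinatorial reading of $a_k$ as oriented partial matchings of $[k]$ is in fact precisely the structure the paper is counting when it derives $a_k$ in Section~\ref{SS:kpairs}: the $\binom{k}{2b}$ choices of spots for the pairs of pairs of type $I_1$ and the $\binom{2b}{b}\,b!$ head-to-tail assignment are the same count as your ``choose the heads, then biject heads to tails,'' with the unmatched spots playing the role of the type-$I_2$ pairs. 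So conditioning on whether the element $k$ is matched is arguably the proof the authors had in mind, and it has the merit of explaining the recurrence structurally. The algebraic fallback via Pascal's rule, the identity $\frac{(2b)!}{b!}=2(2b-1)\frac{(2b-2)!}{(b-1)!}$, and the absorption identity $(2b-1)\binom{k-1}{2b-1}=(k-1)\binom{k-2}{2b-2}$ is equally sound; the boundary issues you flag (the top term when $k$ is even, handled by vanishing binomials, and the base values $a_0=a_1=1$ for $k=2,3$) are indeed the only points needing care, and you treat them correctly.
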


By using (\ref{A:eorec}) we find relations between $\beta_{k,p,q}$'s. 
Let $k$ be an even number of the form $k = 2l$. Then we find that 
\begin{align}
\beta_{2l,p,q} =& {{n-2l} \choose{p-l}} {n \choose 2l} a_{2l}  \notag \\
=& {{n-2l} \choose{p-l}} {n \choose 2l} (a_{2l-1} + 2(2l-1) a_{2l-2})  \notag\\
=& {{n-2l} \choose{p-l}} {n \choose 2l} a_{2l-1} + 2(2l-1){{n-2l} \choose{p-l}} {n \choose 2l} a_{2l-2} \notag\\
=& \frac{n-2l+1 -p+l}{n-2l+1}{{n-(2l-1)} \choose{p-l}} \frac{n+1 -2l}{2l}{n \choose{2l-1}} a_{2l-1} 
\notag\\
+& 2(2l-1) \frac{(p-l+1)(n-l+1-p)}{(n-2l+2)(n-2l+1)}{{n-(2l-2)} \choose{p-l}} \frac{(n-2l+2)(n+1-2l)}{(2l)(2l-1)}{n \choose{2l-2}} a_{2l-2} \notag\\
=& \frac{n-l-p+1}{2l} \beta_{2l-1,p,q} + 2 \frac{(p-l+1)(n-l+1-p)}{2l} \beta_{2l-2,p,q} \notag\\
=& \frac{q-l+1}{2l} \beta_{2l-1,p,q} + 2 \frac{(p-l+1)(q-l+1)}{2l} \beta_{2l-2,p,q}. \label{A:kevenfr}
\end{align}
In a similar manner, for an odd number of the form $k=2l+1$, 
we find that 
\begin{align}
\beta_{2l+1,p,q} =& {{n-2l-1} \choose{p-l-1}} {n \choose {2l+1}} a_{2l+1} \notag\\
=& {{n-2l-1} \choose{p-l-1}} {n \choose {2l+1}} (a_{2l} + 2(2l) a_{2l-1}) \notag\\
=& {{n-2l-1} \choose{p-l-1}} {n \choose {2l+1}} a_{2l} + 2(2l){{n-2l-1} \choose{p-l-1}} {n \choose {2l+1}} a_{2l-1} \notag\\	
=& \frac{p-l}{n-2l}{{n-2l} \choose{p-l}} \frac{n+1 -(2l+1)}{2l+1}{n \choose{2l}} a_{2l} \notag\\
+& 2(2l) \frac{(p-l)(p-l+1)}{(n-2l)(n-2l+1)}{{n-2l+1} \choose{p-l+1}} \frac{(n-2l)(n+1-2l)}{(2l)(2l+1)}{n \choose{2l-1}} a_{2l-1} \notag\\
=& \frac{p-l}{2l+1} \beta_{2l,p,q} + 2 \frac{(p-l)(q-l+1)}{2l+1} \beta_{2l-1,p,q}.\label{A:koddfr}
\end{align}

Now we two recurrences (\ref{A:kevenfr}) and (\ref{A:koddfr})
mixing the terms $\beta_{k,p,q}$ for even and odd $k$. 
To separate the parity, we rework on our initial recurrence (\ref{A:eorec}).


\begin{Lemma}\label{L:tworecsfora}
For all $1 \leq l \leq q-1$, the following recurrences:
\begin{align}
a_{2l+2} =& (8l+3)a_{2l} + 4 (2l)(2l -1) a_{2l-2} \label{A:erec} \\
a_{2l+3} =& (8l+7)  a_{2l+1} + 4 (2l+1)(2l) a_{2l-1} \label{A:orec} 
\end{align}
with $a_0=1, a_1 =1$ are satisfied.
\end{Lemma}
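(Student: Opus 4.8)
The statement to prove is Lemma~\ref{L:tworecsfora}: the two "parity-separated" recurrences
\begin{align*}
a_{2l+2} &= (8l+3)\,a_{2l} + 4(2l)(2l-1)\,a_{2l-2},\\
a_{2l+3} &= (8l+7)\,a_{2l+1} + 4(2l+1)(2l)\,a_{2l-1},
\end{align*}
with $a_0 = a_1 = 1$. The natural approach is to start from the single three-term recurrence \eqref{A:eorec}, namely $a_k = a_{k-1} + 2(k-1)a_{k-2}$ for $k \geq 2$, which already holds by the previous lemma, and iterate it one step to eliminate the odd-indexed (resp. even-indexed) terms so that only indices of a single parity remain. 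Concretely, I would write $a_{2l+2} = a_{2l+1} + 2(2l+1)a_{2l}$, then substitute $a_{2l+1} = a_{2l} + 2(2l)a_{2l-1}$ to get $a_{2l+2} = a_{2l} + 4l\,a_{2l-1} + 2(2l+1)a_{2l} = (4l+3)a_{2l} + 4l\,a_{2l-1}$; one more substitution $a_{2l-1} = a_{2l-2} + 2(2l-2)a_{2l-3}$ is not directly helpful because it reintroduces an odd index, so instead I would use \eqref{A:eorec} in the form $2(2l)a_{2l-1} = a_{2l+1} - a_{2l}$ backward, or better, express $4l\,a_{2l-1}$ in terms of even-indexed terms via $a_{2l} = a_{2l-1} + 2(2l-1)a_{2l-2}$, i.e. $a_{2l-1} = a_{2l} - 2(2l-1)a_{2l-2}$. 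Plugging this in gives $a_{2l+2} = (4l+3)a_{2l} + 4l\big(a_{2l} - 2(2l-1)a_{2l-2}\big) = (8l+3)a_{2l} - 8l(2l-1)a_{2l-2}$, which has the wrong sign on the last term, so I must be more careful.

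The sign discrepancy signals that the clean elimination should be done differently: rather than substituting $a_{2l-1}$ in terms of $a_{2l}$ and $a_{2l-2}$ (which flips a sign), I would instead chain \emph{two} applications of \eqref{A:eorec} forward from $a_{2l-2}$ up to $a_{2l+2}$, carefully tracking coefficients, or — cleaner — proceed by an \emph{induction on $l$} in which both target recurrences are proved simultaneously. The plan is: define the two claimed identities $E_l$ (even case) and $O_l$ (odd case); verify the base cases $l=0,1$ directly from $a_0 = a_1 = 1$, $a_2 = 1 + 2\cdot 1 = 3$, $a_3 = a_2 + 2\cdot 2\cdot a_1 = 3 + 4 = 7$, $a_4 = a_3 + 2\cdot 3\cdot a_2 = 7 + 18 = 25$, $a_5 = a_4 + 2\cdot 4\cdot a_3 = 25 + 56 = 81$, checking e.g. $a_4 = (8\cdot 0+3)\cdot 3 + 0 = 9$ — wait, this gives $9 \neq 25$, so the base index bookkeeping needs attention: presumably the recurrence is stated for $l \geq 1$ with $a_{-2}$ interpreted suitably, and one should instead check $l=1$: $a_4 = (8+3)a_2 + 4\cdot 2\cdot 1\cdot a_0 = 33 + 8 = 41 \neq 25$. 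Thus I would first recompute: from \eqref{A:eorec}, $a_4 = a_3 + 6a_2 = 7 + 18 = 25$; and $(8l+3)a_{2l} + 4(2l)(2l-1)a_{2l-2}$ at $l=1$ is $11\cdot 3 + 8\cdot 1 = 41$. The mismatch means the correct elimination yields $(4l+3)$ and a negative correction, or the intended range is different — the main obstacle is pinning down the exact form, so I would re-derive it cleanly: from $a_{2l+2} = (4l+3)a_{2l} + 4l\,a_{2l-1}$ (established above) and $a_{2l} = (4l-1)a_{2l-2} + 4(l-1)a_{2l-3}$ applied with shifted index, substitute to express $a_{2l-1}$...

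Given this, the \textbf{cleanest route} is to first establish the intermediate two-parity-mixed-but-shorter relation $a_{2l+2} = (4l+3)a_{2l} + 4l\,a_{2l-1}$ and its odd analogue $a_{2l+3} = (4l+5)a_{2l+1} + (4l+2)a_{2l}$ by a single back-substitution into \eqref{A:eorec}, and then eliminate the remaining opposite-parity term by using \eqref{A:eorec} \emph{once more in the forward direction} to rewrite $a_{2l+1} = a_{2l} + 4l\,a_{2l-1}$ combined with the already-derived relation at index $l-1$, i.e. $a_{2l} = (4l-1)a_{2l-2} + 4(l-1)a_{2l-3}$, solving the resulting $2\times 2$ linear system for the cross terms. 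Carrying this out and simplifying the binomial/polynomial coefficients is the bulk of the work but is routine; the only genuine subtlety — and the step I expect to be the main obstacle — is getting the coefficient algebra and the index range exactly right so that the stated closed forms $(8l+3)$, $4(2l)(2l-1)$, $(8l+7)$, $4(2l+1)(2l)$ emerge verbatim, including confirming the base values $a_0 = a_1 = 1$ propagate correctly and that the recurrences are asserted precisely on the range $1 \leq l \leq q-1$ where all indices are nonnegative. I would close the proof by a short induction verifying that the derived recurrences and \eqref{A:eorec} are mutually consistent, which is automatic once the algebraic identity is checked for general $l$.
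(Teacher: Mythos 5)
Your opening computation is, in fact, the paper's entire proof, and it is correct: writing the recurrence (\ref{A:eorec}) at an even and an odd index, $a_{2l+1}=a_{2l}+2(2l)a_{2l-1}$ and $a_{2l}=a_{2l-1}+2(2l-1)a_{2l-2}$, solving the second for $a_{2l-1}$, substituting into the first, and then using $a_{2l+2}=a_{2l+1}+2(2l+1)a_{2l}$, yields
\[
a_{2l+2}=(8l+3)\,a_{2l}-4(2l)(2l-1)\,a_{2l-2},
\]
and the same two substitutions along the odd chain give $a_{2l+3}=(8l+7)\,a_{2l+1}-4(2l+1)(2l)\,a_{2l-1}$. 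The sign you obtained is not an error on your part: the plus signs in the displayed statement of Lemma~\ref{L:tworecsfora} are a misprint. Your own numerics already prove this: $a_2=3$, $a_4=25$, $a_6=331$, and indeed $25=11\cdot 3-8\cdot 1$ and $331=19\cdot 25-48\cdot 3$, whereas the printed $+$ versions give $41$ and $619$. The paper itself corroborates the minus signs: its proof of the lemma ends with exactly the displayed identity above, and the later derivations (\ref{A:kevensr}) and (\ref{A:koddsr}) use the recurrences in their minus-sign form.

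The genuine defect in your proposal is that it never closes. Having derived the correct identity in three lines and having numerically refuted the printed one, you discard that derivation as ``wrong-signed'' and end with a tentative plan (mixed-parity intermediate relations plus a $2\times 2$ elimination) whose declared goal is to make the printed coefficients, plus signs included, emerge verbatim --- which is impossible, since those identities are false for every $l\geq 1$, as your own computation of $a_4$ shows. As written, the proposal therefore proves neither the printed statement (which is unprovable) nor the corrected one (the derivation is begun, then disowned, and the odd-index case is never carried out). The repair is minimal: trust your first computation, state the recurrences with minus signs (this is also the form needed and used later in the paper), and repeat the identical two-step substitution for the odd indices; the initial values $a_0=a_1=1$ and the range $1\leq l\leq q-1$ require no extra care, as all indices occurring there are nonnegative.
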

\begin{proof}
We will give a proof for the former equation here. 
The latter can be proved in a similar way.

We start with splitting (\ref{A:eorec}) 
into two recurrences:
\begin{align}
a_{2l+1} =& a_{2l} + 2 (2l) a_{2l-1} \label{A:O} \\
a_{2l} =& a_{2l-1} + 2 (2l-1) a_{2l-2}. \label{A:E}
\end{align}
On one hand it follows from eqn. \ref{A:E} that we have
\begin{align*}
a_{2l-1} =& a_{2l} - 2 (2l-1) a_{2l-2}.
\end{align*}
Plugging this into eqn. \ref{A:O} yields 
\begin{align*}
a_{2l+1} =& a_{2l} + 2 (2l)( a_{2l} - 2(2l-1)a_{2l-2}) \ \text{or} \\
a_{2l+1} =& (1 + 2(2l))a_{2l} - 4(2l)(2l-1)a_{2l-2}).
\end{align*}
On the other hand, we know that 
\begin{align*}
a_{2l+2}= a_{2l+1} + 2(2l+1) a_{2l}.
\end{align*}
If we plug this into the previous equation, then we obtain
\begin{align*}
a_{2l+2} =& (1+2(2l))a_{2l} - 4 (2l)(2l-1)a_{2l-2} + 2(2l+1)a_{2l} \\
=& (8l+3)a_{2l} - 4(2l)(2l-1)a_{2l-2} \qquad (1 \leq l \leq q-1),
\end{align*}
which finishes the proof of our claim.
\end{proof}


Next, by the help of Lemma~\ref{L:tworecsfora}, we 
obtain a recurrence relation for $\beta_{k,p,q}$'s 
where all of $k$'s are even numbers. 
\begin{align}
\beta_{2l+2,p,q} =& {{n-2l-2} \choose{p-l-1}} {n \choose {2l+2}} a_{2l+2} \notag\\
=& {{n-2l-2} \choose{p-l-1}} {n \choose {2l+2}} ((8l+3)a_{2l} - 4(2l)(2l-1)a_{2l-2})  \notag\\
=& (8l+3){{n-2l-2} \choose{p-l-1}} {n \choose 2l+2} a_{2l} + 4 (2l)(2l-1){{n-2l-2} \choose{p-l-1}} {n \choose 2l+2} a_{2l-2} \notag\\
=& (8l+3)\frac{(p-l)(q-l)}{(n-2l)(n-2l-1)}{{n-2l} \choose{p-l}} \frac{(n-2l)(n-2l-1)}{(2l+2)(2l+1)}{n \choose{2l}} a_{2l} \notag\\
-& 4(2l)(2l-1) \frac{(p-l)(p-l+1)(q-l)(q-l+1)}{(n-2l+2)(n-2l+1)(n-2l)(n-2l-1)}{{n-2l} \choose{p-l+1}}\notag\\
 {}&\frac{(n-2l+2)(n+1-2l)(n-2l)(n-2l-1)}{(2l+2)(2l+1)(2l)(2l-1)}{n \choose{2l-2}} a_{2l-2} \notag\\
=&(8l+3) \frac{(p-l)(q-l)}{(2l+2)(2l+1)} \beta_{2l,p,q} - 4 \frac{(p-l)(p-l+1)(q-l)(q-l+1)}{(2l+2)(2l+1)} \beta_{2l-2,p,q}. \label{A:kevensr}
\end{align}
The proof of the following recurrence follows from similar arguments.
\begin{align}\label{A:koddsr}
\beta_{2l+3,p,q} = (8l+7) \frac{(q-l)(p-l-1)}{(2l+3)(2l+2)} \beta_{2l+1,p,q} - 4 \frac{(p-l)(p-l-1)(q-l)(q-l+1)}{(2l+3)(2l+2)} \beta_{2l-1,p,q}.
\end{align}

\subsection{The proof of Theorem~\ref{T:second}.}\label{SS:proof1}

As we mentioned in the introduction, we are looking for the closed form of 
the generating function 
$$
B(y,z) = \sum_{p\geq 0} B_p(1,y)z^p,
$$
where 
\begin{align*}
b_{p,q}(x) = \sum_{l=0}^{q} (\beta_{2l,p,q} x^{q-l} + \beta_{2l+1,p,q}x^{q-l} ) \hspace{.3cm}
\text{ and } \hspace{.3cm} B_p(x,y) = \sum_q b_{p,q}(x) y^q.
\end{align*}
In particular, we are looking for an expression of $b_{p,q}(1)$ which is simpler than 
the one that is given in Theorem~\ref{T:first}.

Obviously, 
$$
b_{p,q-1}(x) = \sum_{l=0}^{q-1} (\beta_{2l,p,q-1} x^{q-l-1} + \beta_{2l+1,p,q-1} x^{q-l-1}).
$$
It follows from Lemma~\ref{L:4identity}
that 
\begin{align*}
b_{p,q}(x) =& (\beta_{2q,p,q} + \beta_{2q+1,p,q})x^0 + \sum_{l=0}^{q-1} (\beta_{2l,p,q} x^{q-l} + \beta_{2l+1,p,q} x^{q-l})\\
=&(\beta_{2q,p,q+1} + \beta_{2q+1,p,q+1}) + \sum_{l=0}^{q-1} (p+q) \biggr(\frac{\beta_{2l,p,q-1}}{q-l} x^{q-l} + \frac{\beta_{2l+1,p,q-1}}{q-l}x^{q-l} \biggr).
\end{align*}
Taking the derivative of both sides of the above equation gives us that 
\begin{align*}
b_{p,q}^{'} (x) =& \sum_{l=0}^{q-1} (p+q) \biggr( \beta_{2l,p,q-1} x^{q-l-1} + \beta_{2l+1,p,q-1}x^{q-l-1} \biggr),
\end{align*}
or, equivalently, gives that 
\begin{align}\label{A:derr}
b_{p,q}^{'} (x) = (p+q) b_{p,q-1} (x).
\end{align}
The differential equation (\ref{A:derr}) leads to  
a PDE for our initial generating function $B_p(x,y)$:
\begin{align*}
\frac{\partial}{\partial x} (B_p(x,y)) 
=& \frac{\partial}{\partial x} \biggr[\sum_{q \geq 0} b_{p,q} (x)  y^q \biggr]
=b_{p,0}^{'}y^0 + \sum_{q \geq 1} b_{p,q}^{'} (x)  y^q \ \qquad (b_{p,0}^{'} =0 ) \\
=&  \sum_{q \geq 1}(p+q) b_{p,q-1}(x) y^q
= py \sum_{q \geq 1}  b_{p,q-1}(x)y^{q-1}+ y\sum_{q \geq 1}  q b_{p,q-1}(x) y^{q-1} \\
=& pyB_p(x,y)+ y \biggr(\frac{\partial}{\partial y} (y \cdot B_p(x,y)) \biggr) \\
=& y^2\frac{\partial}{\partial y} B_p(x,y) + y B_p(x,y) + py B_p(x,y).
\end{align*}
By the last equation we obtain the PDE that we mentioned in the introduction: 
\begin{align}\label{A:fundamentalPDE1}
\frac{\partial}{\partial x} B_p(x,y) - y^2\frac{\partial}{\partial y} B_p(x,y)= y(1+p) B_p(x,y).
\end{align}

The general solution $S(x,y)$ 
of (\ref{A:fundamentalPDE1}) is given by 
\begin{align}
S(x,y)=\frac{1}{y^{p+1}}G\left(\frac{1-xy}{y}\right),
\end{align}
where $G(z)$ is some function in one-variable. 
We want to choose $G(z)$ in such a way that $S(x,y)=B_p(x,y)$
holds true. To do so, first, we look at some special values of $B_p(x,y)$.

If let $x=0$, then $B_p(0,y) = \sum_{q\geq 0} b_{p,q}(0) y^p$ and 
$b_{p,q}(0) = 2(\beta_{2q,q,p}+\beta_{2q+1,q,p})$ for all $q> 0$.
Also, recall from the introduction that if $q=0$, then $b_{p,q}=p+1$. 
Thus, we ask from $G(z)$ that it satisfies the following equation  
\begin{align*}
\frac{1}{y^{p+1}}G\left(\frac{1}{y}\right) = (p+1) + 2 \sum_{q\geq 1} 
(\beta_{2q,q,p}+\beta_{2q+1,q,p}) y^q,
\end{align*}
or that 
\begin{align}\label{A:ask}
G\left(\frac{1}{y}\right) = y^{p+1} \left( (p+1) + 2 \sum_{q\geq 1} 
(\beta_{2q,q,p}+\beta_{2q+1,q,p}) y^q\right).
\end{align}

Therefore, we see that our generating function is given by 
\begin{align}
B_p(x,y) &= \frac{1}{y^{p+1}}G\left(\frac{1}{y/(1-xy)}\right) \notag \\
&= 
\frac{1}{y^{p+1}}
\left( \frac{y}{1-xy}\right)^{p+1} \left( (p+1) + 2 \sum_{q\geq 1} 
(\beta_{2q,q,p}+\beta_{2q+1,q,p}) \left( \frac{y}{1-xy}\right)^q\right) \notag \\
&= 
\left( \frac{1}{1-xy}\right)^{p+1} \left( (p+1) + 2 \sum_{q\geq 1} 
(\beta_{2q,q,p}+\beta_{2q+1,q,p}) \left( \frac{y}{1-xy}\right)^q\right). \label{A:prep}
\end{align}
\vspace{.5cm}

To get a more precise information about $b_{p,q}$'s we substitute $x=1$ 
in (\ref{A:prep}):
\begin{align*}
B_p(1,y) 
&= \frac{1}{(1-y)^{p+1}} \left( (p+1) + 2 \sum_{q\geq 1} 
(\beta_{2q,q,p}+\beta_{2q+1,q,p}) \left( \frac{y}{1-y}\right)^q\right),
\end{align*}
or 
\begin{align}\label{A:prep1}
(1-y)^{p+1} B_p(1,y) 
&=(p+1) + 2 \sum_{q\geq 1} 
(\beta_{2q,q,p}+\beta_{2q+1,q,p}) \left( \frac{y}{1-y}\right)^q.
\end{align}

Now we apply the transformation $y\mapsto z= y/(1-y)$ in (\ref{A:prep1}):
\begin{align}
\frac{ 1}{(1+z)^{p+1}} B_p\left(1,\frac{z}{1+z}\right) &= 
 (p+1) + 2 \sum_{q\geq 1} 
(\beta_{2q,q,p}+\beta_{2q+1,q,p})z ^q.
\end{align}
This finishes the proof of Theorem~\ref{T:second}
since $B_p\left(1,\frac{z}{1+z}\right) = f_p(z)$.


\section{Part II: Counting ssymmetric clans}

\begin{Convention}
For this part of our paper, without loss of generality, we assume that $p$
and $q$ are nonnegative integers such that $p\geq q$.
\end{Convention}

\subsection{Ssymmetric clans with $k$-pairs.}\label{SS:Symplectickpairs}

Recall that a ssymmetric $(2p,2q)$ clan 
$\gamma = c_1 \dots c_{2n}$ is a symmetric 
clan such that $c_i \neq c_{2n+1-i}$ 
whenever $c_i$ is a number. 
In this second part of our paper, we are going to
find various generating functions and combinatorial
interpretations for the number $c_{p,q}$ of 
ssymmetric $(2p,2q)$ clans. 
We start by stating a simple lemma that 
tells about the involutions corresponding to 
ssymmetric clans.

\begin{Lemma}\label{L:evennumberof}
Let $\gamma = c_1 c_2 \dots c_{2n}$ be a ssymmetric 
$(2p,2q)$ clan. If $\pi \in S_{2n}$ is the associated involution
with $\gamma$, then there are even number of 2-cycles in $\pi$.
\end{Lemma}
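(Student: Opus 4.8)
The plan is to exhibit a fixed-point-free involution on the set of $2$-cycles of $\pi$; since a finite set carrying such an involution has even cardinality, this will give the claim. First I would recall that $\gamma$ is in particular a symmetric clan of order $2n$, so that $[\gamma]=[rev(\gamma)]$; tracing this through the construction of the surjection $\varphi$ in the proof of Lemma~\ref{L:clanstoinvolutions}, one gets that whenever $(i,j)$ with $1\le i<j\le 2n$ is a $2$-cycle of $\pi=\varphi(\gamma)$, the transposition $(2n+1-j,\,2n+1-i)$ is also a $2$-cycle of $\pi$. (This is the order-$2n$ analogue of the bulleted observation made just before Lemma~\ref{L:L1}, with $2n+2$ replaced by $2n+1$.) Consequently the assignment
\[
\sigma:(i,j)\longmapsto (2n+1-j,\,2n+1-i)
\]
is a well-defined map from the set of $2$-cycles of $\pi$ to itself.

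Next I would verify that $\sigma$ is an involution with no fixed points. That $\sigma^2=\mathrm{id}$ is immediate from $2n+1-(2n+1-i)=i$. Suppose $\sigma$ fixed a $2$-cycle $(i,j)$, i.e.\ $\{i,j\}=\{2n+1-i,\,2n+1-j\}$. The possibility $i=2n+1-i$ is excluded because $2n+1$ is odd, so one must have $i=2n+1-j$, that is $j=2n+1-i$; but then the matching pair $c_i=c_j$ occupies positions $i$ and $2n+1-i$, contradicting the defining property of an ssymmetric $(2p,2q)$ clan that $c_i\neq c_{2n+1-i}$ whenever $c_i$ is a number. Hence $\sigma$ is fixed-point-free.

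Finally, a fixed-point-free involution partitions the (finite) set of $2$-cycles of $\pi$ into orbits of size exactly two, so this set has even cardinality, which is precisely the statement of the lemma. I expect no real obstacle here: the only points needing care are that $\sigma$ genuinely maps $2$-cycles to $2$-cycles (this is the symmetric-clan condition) and that it has no fixed point (this is exactly where the extra ssymmetric condition $c_i\neq c_{2n+1-i}$ is used), and both are short checks. One could alternatively phrase the same argument by splitting the $2$-cycles into those entirely in the first half, those straddling the center, and those entirely in the second half, but the involution formulation is cleaner and makes the role of the ssymmetric hypothesis transparent.
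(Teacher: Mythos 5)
Your proposal is correct and follows essentially the same route as the paper: the paper also pairs each 2-cycle $(i,j)$ with its reflection $(2n+1-j,2n+1-i)$, using symmetry of the clan to see the reflected pair is a 2-cycle and the ssymmetric condition $c_i\neq c_{2n+1-i}$ to see it is distinct. Your only difference is cosmetic — you package the pairing as an explicit fixed-point-free involution on the set of 2-cycles, which tightens the paper's "said differently, the number must be even" step.
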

\begin{proof}
First, notice that if for some $1 \leq i < j \leq 2n$ 
the numbers $c_i$ and $c_j$ form a pair, that is to say a 2-cycle in $\pi$, 
then by symmetry  $c_{2n+1-i}$ and $c_{2n+1-j}$ form a pair in $\pi$ as well. 
In addition, by the condition that is requiring for all natural $c_i$'s that 
$c_i \neq c_{2n+1-i}$, $c_i$ and $c_{2n+1-j}$ cannot form a pair in $\pi$. 
Therefore, if we have a pair $(c_i,c_j)$ in $\pi$, then we must also have another pair 
$(c_{2n+1-j}, c_{2n+1-i})$ which is different from $(c_i, c_j)$. 
Said differently, the number of 2-cycles in $\pi$ must be even. 
\end{proof}

In the light of Lemma~\ref{L:evennumberof},  
we will focus on the subset $I^{sp}_{k,p,q} \subset S_{2n}$ consisting 
of involutions $\pi$ whose standard cycle decomposition  
is of the form 
$$
\pi = (i_1 j_1) \dots (i_{2k} j_{2k}) d_1 \dots d_{2n-4k}.
$$
Furthermore, we assume the fixed points of $\pi$ are labeled 
by the elements of $\{ + , -\}$ in such a way that 
there are $2p-2q$ more $+$'s than $-$'s
and we want the following conditions be satisfied: 
\begin{enumerate}
\item $k\leq q$ (this is because there are $2p-2q$ more $+$'s than $-$'s, hence $2q+2p-4k \geq 2p-2q$);
\item if $(i,j)$ is a 2-cycle such that $1 \leq i < j \leq n$, then $(2n+1-j, 2n+1-i)$ is a 2-cycle also; 
\item if $(i,j)$ is a 2-cycle such that $1 \leq i < n+1 \leq j \leq 2n$, then $(2n+1-j, 2n+1-i)$ is a 2-cycle as well. 
\end{enumerate}

The (signed) involutions in $I^{sp}_{k,p,q}$ are 
precisely the involutions that correspond to the ssymmetric $(2p,2q)$ clans
under the bijection of Lemma~\ref{L:clanstoinvolutions2},
so, $\gamma_{k,p,q}$ stands for the cardinality of $I^{sp}_{k,p,q}$.
To find a formula for $\gamma_{k,p,q}$'s we argue 
similarly to the case of $\beta_{k,p,q}$, by counting 
the number of possible ways of placing pairs and by
counting the number of possible ways of placing $\pm$'s 
on the fixed points. 
Also, we make use of the bijection $\widetilde{\varphi}$
of Lemma~\ref{L:clanstoinvolutions2} to switch 
between the involution notation and the clan notation.

First of all, an involution $\pi$ from $I^{sp}_{k,p,q}$
has $2k$ 2-cycles and $2n-4k$ fixed points. 
The $2k$ 2-cycles, by using numbers from $\{1,\dots, 2n\}$
can be chosen in ${ n \choose 2k}$; the number of 
rearrangements of these $2k$ pairs and their entries, 
to obtain the standard form of an involution, requires $\frac{(2k)!}{k!}$ 
steps. In other words, the 2-cycles 
of $\pi$ are found and placed in the standard ordering
in ${ n \choose 2k } \frac{(2k)!}{k!}$ possible ways. 
Once we have the 2-cycles of the involution, 
we easily see that the numbers and their positions
in the corresponding ssymmetric clan are uniquely 
determined. 

Next, we determine the number of ways to place
$\pm$'s. This amounts 
to finding the number of ways of placing 
$2\alpha$ +'s and $2\beta$ -'s on the string $d_1 \dots d_{2n-4k}$ 
so that there are exactly $2p-2q=2\alpha-2\beta$ +'s more than -'s. 
By applying the inverse of the bijection $\widetilde{\varphi}$ 
of Lemma~\ref{L:clanstoinvolutions2}, we will use the symmetry 
condition on the corresponding clan. Thus, we observe that 
it is enough to focus on the first $n$ places of the clan only. 
Now, the number of $+$'s in the first $n$ places can 
be chosen in ${ n- 2k \choose \alpha }$ different ways. 
Once we place the $+$'s, the remaining entries will be filled
with $-$'s. Clearly there is now only one way of doing this 
since we placed the numbers and the $+$ signs already. 
Therefore, to finish our counting, we need to find what that $\alpha$ is.
Since $\alpha+\beta = n-2k= q+p-2k$ and since $\alpha-\beta = p-q$, 
we see that $\alpha = p-k$.

In summary, the number of possible ways of constructing 
a signed involution corresponding to a ssymmetric $(2p,2q)$ clan 
is given by 
\begin{align}\label{A:first formula}
\gamma_{k,p,q} = { q+p \choose 2k } \frac{ (2k)! }{ k! } { q+ p - 2k  \choose p-k }.
\end{align}
Note here that we are using $n=p+q$. 
The right-hand side of (\ref{A:first formula}) can be expressed 
more symmetrically as follows: 
\begin{align}\label{A:gamma kqp}
\gamma_{k,p,q} = \frac{(q+p)! }{ (q-k)! (p-k)! k!}.
\end{align}

\subsection{Recurrences for $\gamma_{k,p,q}$'s.}\label{SS:Recsforgammas}

Observe that the formula (\ref{A:gamma kqp}) is defined independently of the inequality 
$q\leq p$. 
From now on, for our combinatorial purposes, we skip mentioning this comparison between $p$ and $q$ and 
use the equality $\gamma_{k,p,q}=\gamma_{k,q,p}$ whenever it is needed. 
Also, we record the following obvious recurrences for future reference:
\begin{align}
\gamma_{k,p,q} &= \frac{(p-k+1)(q-k+1)}{k} \gamma_{k-1,p,q},\label{A:Future1} \\
\gamma_{k,p-1,q} &=  \frac{p-k}{p+q}  \gamma_{k,p,q}, \label{A:Future2}\\
\gamma_{k,p,q-1} &= \frac{q-k}{p+q} \gamma_{k,p,q}. \label{A:Future3}
\end{align}
These recurrences hold true whenever both sides of the equations 
are defined. Notice that in (\ref{A:Future1})--(\ref{A:Future3}) 
the parity, namely $k$ does not change. 
Next, we will show that $\gamma_{k,p,q}$'s obey a 3-term recurrence
once we allow change in all three numbers $p,q$, and $k$.

\begin{Lemma}\label{T:rec}
Let $p$ and $q$ be two positive integers. If $k\geq 1$, 
then we have 
\begin{equation}\label{recurrence}
\gamma_{k,p,q} =  \gamma_{k,p-1,q} +  
\gamma_{k,p,q-1} + 2(q+p-1) \gamma_{k-1,p-1,q-1}\; \; \text{and} \;\; 
\gamma_{0,p,q}= {p+q \choose p}.
\end{equation}
\end{Lemma}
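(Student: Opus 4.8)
The plan is to derive the recurrence (\ref{recurrence}) directly from the closed formula (\ref{A:gamma kqp}), namely $\gamma_{k,p,q} = \frac{(q+p)!}{(q-k)!(p-k)!k!}$, which was established in the previous subsection; this reduces the claim to a routine verification of a polynomial (really, multinomial-coefficient) identity. First I would dispense with the boundary case: setting $k=0$ in (\ref{A:gamma kqp}) gives $\gamma_{0,p,q} = \frac{(p+q)!}{q!\,p!} = \binom{p+q}{p}$, which is the stated initial condition. For the main recurrence I would rewrite the right-hand side using (\ref{A:gamma kqp}): the three terms are
\[
\gamma_{k,p-1,q} = \frac{(p+q-1)!}{(q-k)!(p-1-k)!k!}, \qquad
\gamma_{k,p,q-1} = \frac{(p+q-1)!}{(q-1-k)!(p-k)!k!},
\]
\[
2(p+q-1)\gamma_{k-1,p-1,q-1} = 2(p+q-1)\cdot\frac{(p+q-2)!}{(q-k)!(p-k)!(k-1)!}.
\]

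The key step is then to pull out the common factor $\dfrac{(p+q-2)!}{(q-k)!(p-k)!k!}$ from all three terms. After doing so, the three summands become $(p+q-1)(p-k)$, $(p+q-1)(q-k)$, and $2(p+q-1)k$ respectively, so the right-hand side equals
\[
\frac{(p+q-2)!}{(q-k)!(p-k)!k!}\,(p+q-1)\bigl[(p-k)+(q-k)+2k\bigr]
= \frac{(p+q-2)!}{(q-k)!(p-k)!k!}\,(p+q-1)(p+q),
\]
since $(p-k)+(q-k)+2k = p+q$. This last expression is exactly $\frac{(p+q)!}{(q-k)!(p-k)!k!} = \gamma_{k,p,q}$, completing the verification. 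I would note in passing that all factorials appearing have nonnegative arguments under the standing hypotheses (for the recurrence to be asserted we need $1\le k$, and for the individual terms on the right to be defined we need $k\le p-1$, $k\le q-1$ in the relevant places, matching the convention that $\gamma$ vanishes or is undefined otherwise, exactly as in (\ref{A:Future1})--(\ref{A:Future3})).

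The main obstacle is essentially bookkeeping rather than mathematical depth: one must be careful that the factorization of the common factor is legitimate, i.e. that $(p-1-k)! = (p-k)!/(p-k)$ and $(q-1-k)! = (q-k)!/(q-k)$ and $(k-1)! = k!/k$ are all valid, which requires $p-k\ge 1$, $q-k\ge 1$, $k\ge 1$. These are precisely the hypotheses under which both sides of (\ref{recurrence}) are defined, so the identity holds wherever it is claimed. An alternative, more combinatorial route would be to argue directly on the signed involutions in $I^{sp}_{k,p,q}$ by conditioning on the behavior of the pair of positions $\{n, n+1\}$ (or $\{1,2n\}$) — whether they are occupied by a $2$-cycle-pair or by fixed points — but the algebraic proof from (\ref{A:gamma kqp}) is shorter and cleaner, so that is the one I would present, remarking afterwards that Proposition~\ref{P:First recurrence} follows immediately by summing (\ref{recurrence}) over $k$.
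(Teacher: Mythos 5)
Your verification is correct: the three right-hand terms, after extracting the common factor $\frac{(p+q-2)!}{(q-k)!\,(p-k)!\,k!}$, contribute $(p+q-1)(p-k)$, $(p+q-1)(q-k)$ and $2(p+q-1)k$, which sum to $(p+q-1)(p+q)$, giving $\gamma_{k,p,q}$; the initial condition and the boundary bookkeeping ($k\le p-1$, $k\le q-1$, with the convention that terms with negative factorial arguments vanish) are handled adequately. The route, however, differs from the paper's. The paper does not verify the identity directly: it sets $\widetilde{\gamma}_{k,p,q}=\frac{(q+p)!}{2^k(q-k)!(p-k)!k!}$, invokes the recurrence $\widetilde{\gamma}_{k,p,q}=\widetilde{\gamma}_{k,p-1,q}+\widetilde{\gamma}_{k,p,q-1}+(p+q-1)\widetilde{\gamma}_{k-1,p-1,q-1}$ already proven in the earlier paper \cite{CU} (the type \textbf{AIII} setting), observes that the initial condition ${p+q\choose p}$ is the same, and multiplies through by $2^k$ using $\gamma_{k,p,q}=2^k\widetilde{\gamma}_{k,p,q}$. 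Your argument is self-contained and makes completely transparent where the doubled weight $2(p+q-1)$ comes from at the level of the closed formula, at the cost of a page of (routine) factorial manipulation; the paper's argument is shorter and deliberately highlights the structural parallel between the type \textbf{CII} numbers and the type \textbf{AIII} numbers $\alpha_{p,q}$, a parallel that is exploited repeatedly in the rest of Part II. Your closing remark that Proposition~\ref{P:First recurrence} follows by summing over $k$ matches the paper. Either proof is acceptable.
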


\begin{proof}
Instead of proving our result directly, we will make use
of a similar result that we proved before.
Let $\widetilde{\gamma}_{k,p,q}$ denote 
the number 
\begin{align}\label{A:gamma kqp}
\widetilde{\gamma}_{k,p,q} = \frac{(q+p)! }{ 2^k (q-k)! (p-k)! k!}.
\end{align}
In \cite{CU}, it is proven that 
\begin{align}\label{A:previousgammarec}
\widetilde{\gamma}_{k,p,q} =  \widetilde{\gamma}_{k,p-1,q} +\widetilde{\gamma}_{k,p,q-1}
+(p+q-1) \widetilde{\gamma}_{k-1,p-1,q-1}
\end{align}
holds true for all $p,q,k\geq 1$. 
Note that $\widetilde{\gamma}_{0,p,q}= {p+q \choose p}$, which is our initial condition
for $\gamma_{k,p,q}$'s. Therefore, 
combining (\ref{A:previousgammarec}) with the fact that 
$\gamma_{k,p,q} = 2^k \widetilde{\gamma}_{k,p,q}$ finishes our proof. 
\end{proof}

\begin{Convention}\label{Con2}
From now on we will assume that $c_{p,q}=1$ whenever 
one or both of $p$ and $q$ are zero. 
\end{Convention}

\begin{Proposition}[Proposition~\ref{P:First recurrence}]
For all positive integers $p$ and $q$, the following recurrence
relation holds true: 
\begin{equation}\label{A:cpqrecurrence}
c_{p,q} =  c_{p-1,q} + c_{p,q-1} +  2(p+q-1) c_{p-1,q-1}.
\end{equation}
\end{Proposition}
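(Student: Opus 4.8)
The plan is to derive the recurrence for $c_{p,q}$ directly from the three-term recurrence for the refined numbers $\gamma_{k,p,q}$ established in Lemma~\ref{T:rec}, simply by summing over $k$. Recall that $c_{p,q}=\sum_{k\ge 0}\gamma_{k,p,q}$ and that $\gamma_{k,p,q}=0$ once $k>\min(p,q)$, so every sum below is finite. The one point that needs care is that Lemma~\ref{T:rec} governs only the terms with $k\ge 1$, whereas the $k=0$ term is pinned down by the separate initial condition $\gamma_{0,p,q}=\binom{p+q}{p}$; accordingly I would treat these two contributions separately.

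First I would sum the identity $\gamma_{k,p,q}=\gamma_{k,p-1,q}+\gamma_{k,p,q-1}+2(p+q-1)\gamma_{k-1,p-1,q-1}$ over all $k\ge 1$. Reindexing the last sum by $j=k-1$ gives $\sum_{k\ge 1}\gamma_{k-1,p-1,q-1}=\sum_{j\ge 0}\gamma_{j,p-1,q-1}=c_{p-1,q-1}$, while the first two sums become $c_{p-1,q}-\gamma_{0,p-1,q}$ and $c_{p,q-1}-\gamma_{0,p,q-1}$. Adding $\gamma_{0,p,q}$ back to both sides then yields
\[
c_{p,q}=c_{p-1,q}+c_{p,q-1}+2(p+q-1)\,c_{p-1,q-1}+\Bigl(\gamma_{0,p,q}-\gamma_{0,p-1,q}-\gamma_{0,p,q-1}\Bigr).
\]
The correction term equals $\binom{p+q}{p}-\binom{p+q-1}{p-1}-\binom{p+q-1}{p}$, which vanishes by Pascal's rule, leaving exactly the asserted recurrence.

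Finally I would confirm consistency with Convention~\ref{Con2} at the boundary: when $p=1$ (respectively $q=1$) one of the shifted index pairs has a zero entry, and since $\gamma_{k,0,m}=0$ for $k\ge 1$ while $\gamma_{0,0,m}=\binom{m}{0}=1$, the convention $c_{0,m}=1$ is automatically respected, so the summation argument is legitimate for all $p,q\ge 1$. I do not anticipate any genuine obstacle: the computation is essentially a one-line summation, and the only subtlety is the bookkeeping of the $k=0$ terms so that the initial condition of the $\gamma$-recurrence lines up with Pascal's identity.
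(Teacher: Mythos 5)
Your proposal is correct and follows essentially the same route as the paper: both sum the three-term recurrence of Lemma~\ref{T:rec} over $k$ and observe that the leftover boundary terms cancel, the $k=0$ contribution via Pascal's rule. The only cosmetic difference is that you sum over all $k\geq 1$ using the convention $\gamma_{k,p,q}=0$ for $k>\min(p,q)$ and spell out the Pascal cancellation, whereas the paper sums over $1\leq k\leq p-1$ and leaves the vanishing of the residual terms implicit.
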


\begin{proof}
Recall that $c_{p,q} = \sum_k \gamma_{k,p,q}$. 
Thus, summing both sides of eqn (\ref{recurrence}) over $k$ 
with $1 \leq k \leq p-1$ gives 
\begin{align*}
c_{p,q} -  c_{p-1,q} -  c_{p,q-1} - 2(p+q-1) c_{p-1,q-1} 
&= \gamma_{0,p,q} -  \gamma_{0,p-1,q} -  \gamma_{0,p,q-1} \\
&+ \gamma_{p,p,q} -  \gamma_{p,p,q-1} - 2(p+q-1) \gamma_{p-1,p-1,q-1} \\
&= 0.
\end{align*} 
\end{proof}

\subsection{Proof of Theorem~\ref{T:main2}}\label{SS:proofofmain2}

One of the many options for a bivariate 
generating function for $c_{p,q}$'s is the following 
\begin{align}\label{A:one of many}
v(x,y) := \sum_{p,q \geq 0 } c_{p,q} \frac{ (2x)^q y^p}{p!}.
\end{align}
Let us tabulate first few terms of $v(x,y)$:
\begin{align}
\sum_{p,q \geq 0} c_{p,q} \frac{ (2x)^q y^p}{p!}
&= c_{0,0} + c_{0,1} 2x + \dots + c_{0,q} (2x)^q +\dots \notag\\
&+ \frac{c_{1,0}}{1!} y + \dots + \frac{c_{p,0}}{p!} y^p + \dots \notag \\
&+ \frac{c_{1,1}}{1!} (2x)y + \dots + \frac{c_{p,1}}{p!} (2x)y^p +\dots \notag \\
&+ \frac{c_{1,2}}{1!} (2x)^2 y + \dots + \frac{c_{p,2}}{p!} (2x)^2 y^p +\dots. \label{A:1}
\end{align}
It follows from our Convention~\ref{Con2} and eqn (\ref{A:1}) that 
\begin{align}\label{A:v}
v(x,y) = \frac{1}{1-2x} + e^y -1 + \sum_{p,q\geq 1 }c_{p,q} \frac{ (2x)^q y^p}{p!}.
\end{align}
We feed this observation into our recurrence~(\ref{A:cpqrecurrence})
and use similar arguments for the right hand side of it: 
\begin{align*}
v(x,y)- \frac{1}{1-2x} - e^y +1 &= \int \sum_{p\geq 1,q \geq 0} \frac{c_{p-1,q}}{(p-1)!} (2x)^{q} y^{p-1}  dy - e^y \\
&+ 2x \bigg(\sum_{p,q \geq 0} \frac{c_{p,q}}{p!} (2x)^q y^p - \frac{1}{1-2x} \bigg) \\
&+ 2\sum_{p,q \geq 1} p\frac{c_{p-1,q-1}}{p!} (2x)^q y^p + 2 \sum_{p,q \geq 1} q\frac{c_{p-1,q-1}}{p!} (2x)^q y^p \\
&-  2 \sum_{p,q \geq 1} \frac{c_{p-1,q-1}}{p!} (2x)^q y^p \\
&= \int \sum_{p\geq 1,q \geq 0} \frac{c_{p-1,q}}{(p-1)!} (2x)^{q} y^{p-1}  dy - e^y \\ 
&+ 2x \bigg(\sum_{p,q \geq 0} \frac{c_{p,q}}{p!} (2x)^q y^p - \frac{1}{1-2x} \bigg) \\
&+ 4xy \sum_{p,q \geq 0} \frac{c_{p-1,q-1}}{(p-1)!} (2x)^{q-1} y^{q-1} \\
&+4x\int  \sum_{p,q \geq 1} \frac{qc_{p-1,q-1}}{(p-1)!} (2x)^{q-1} y^{p-1} dy\\
&-  4x \int \sum_{p,q \geq 1} \frac{c_{p-1,q-1}}{(p-1)!} (2x)^{q-1} y^{p-1} dy.
\end{align*}
Thus, we have 
\begin{align*}
v(x, y)  - \frac{1}{1-2x} - e^y +1 &= \int v(x,y) dy - e^y + 2x\ v(x,y) - \frac{2x}{1-2x} + 4xy\ v(x,y)\\
&+ x \int \bigg( \frac{\partial}{\partial x} (2x\ v(x,y))\bigg) dy - 4x \int v(x,y) dy,
\end{align*}
or equivalently,
\begin{align*}
(1-2x- 4xy) v(x, y) &= (1-4x)\int v(x,y) dy  + x \int \bigg( \frac{\partial}{\partial x} (2x v(x,y))\bigg) dy.
\end{align*} 

Now differentiating with respect to $y$ gives us a PDE: 
\begin{align*}
-4x \ v(x,y) +(1-2x-4xy) \frac{\partial v(x,y)}{\partial y} = (1-4x)\ v(x,y) 
+ x \bigg( 2v(x,y) + 2x \frac{\partial v(x,y)}{\partial x} \bigg),
\end{align*}
which we reorganize as in 
\begin{align}\label{A:PDE2}
(-2x^2) \frac{\partial v(x,y)}{\partial x} + (1-2x-4xy)\frac{\partial v(x,y)}{\partial y}=  (1+2x)\ v(x,y).
\end{align}
Here, we have the obvious initial conditions
$$
v(0,y) = e^y\ \text{ and } \ v(x,0) = \frac{1}{1-2x}.
$$

Solutions of such PDE's are easily obtained by applying the method of ``characteristic curves.''
Our characteristic curves are $x(r,s), y(r,s)$, and $v(r,s)$. Their tangents are equal to 
\begin{equation}\label{charcurve}
\frac{\partial x }{\partial s} = -2x^2, \qquad \frac{\partial y }{\partial s } 
= 1- 2x- 4xy, \qquad \frac{\partial v }{\partial s } = (1+2x)v,
\end{equation}
with the initial conditions 
$$
x(r,0) = r,\;\; y(r, 0) = 0,\;\; \text{and}\;\; v(r, 0) = \frac{1}{1-2r}.
$$
From the first equation given in (\ref{charcurve}) and its initial condition underneath, 
we have 
\begin{align}\label{A:y is simple}
x(r,s)= \frac{r}{2rs+1}.
\end{align}
Plugging this into the second equation gives us 
$\frac{\partial y }{\partial r } =  1- \frac{2}{2rs+1}(1+2y)$, which is a first order linear ODE.  
The general solution for this ODE is 
\begin{align}\label{A:x is complicated}
y(r,s)=\frac{3s + 4r^2 s^3 - 6r^2s^2 + 6rs^2-6rs }{3(2rs+1)^2}.
\end{align}
Finally, from the last equation in (\ref{charcurve}) together with its initial condition we conclude that 
$$
v(r,s) = \frac{e^{s} (2rs+1)}{1-2r}.
$$
In summary, we outlined the proof of our next result.
\begin{Theorem}\label{T:GF2}
Let $v(x,y)$ denote the power series $\sum_{p,q\geq 0} c_{p,q} (2x)^q \frac{y^p}{p!}$.
If $r$ and $s$ are the variables related to $x$ and $y$ as in equations 
(\ref{A:x is complicated}) and (\ref{A:y is simple}), then we 
\begin{align}\label{A:simple}
v(r,s) = \frac{e^{s} (2rs+1)}{1-2r}.
\end{align}
\end{Theorem}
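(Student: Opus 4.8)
The plan is to solve the first-order linear PDE~(\ref{A:PDE2})---itself obtained above from the recurrence~(\ref{A:cpqrecurrence})---by the method of characteristic curves. Before invoking that method I would record the two Cauchy data directly from the definition~(\ref{A:one of many}) together with Convention~\ref{Con2}: setting $y=0$ annihilates every term with $p\geq 1$, leaving $v(x,0)=\sum_{q\geq 0}(2x)^q=\tfrac{1}{1-2x}$, whereas setting $x=0$ annihilates every term with $q\geq 1$, leaving $v(0,y)=\sum_{p\geq 0}y^p/p!=e^{y}$. The characteristic field of~(\ref{A:PDE2}) is $(-2x^2,\,1-2x-4xy)$, whose second component is nonzero near the origin; hence $\{y=0\}$ is non-characteristic there, so the datum $v(x,0)=\tfrac{1}{1-2x}$ alone pins down the solution, and $v(0,y)=e^{y}$ is kept in reserve as an independent check.

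Next I would set up coordinates $(r,s)$, with $s$ the flow parameter along a characteristic and $r$ the label of its foot on $\{y=0\}$, so that the characteristic system is precisely~(\ref{charcurve}) with $x(r,0)=r$, $y(r,0)=0$, $v(r,0)=\tfrac{1}{1-2r}$. The $x$-equation $\partial_s x=-2x^2$ is separable and integrates immediately to $x(r,s)=\tfrac{r}{2rs+1}$, which is~(\ref{A:y is simple}). Feeding this into the $y$-equation of~(\ref{charcurve}) turns it into the \emph{linear} ODE $\partial_s y = 1-2x-4xy = 1-\tfrac{2r}{2rs+1}-\tfrac{4r}{2rs+1}\,y$; its integrating factor is $(2rs+1)^{2}$, and together with $y(r,0)=0$ this gives $(2rs+1)^{2}y=\int_{0}^{s}(2ru+1)(2ru+1-2r)\,du$, an elementary integral whose evaluation is the closed form~(\ref{A:x is complicated}). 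Finally, with $x(r,s)$ known, the $v$-equation $\partial_s v=(1+2x)v=\bigl(1+\tfrac{2r}{2rs+1}\bigr)v$ is separable; integrating from $s=0$ and using $v(r,0)=\tfrac{1}{1-2r}$ yields $v(r,s)=\tfrac{1}{1-2r}\exp\!\bigl(s+\ln(2rs+1)\bigr)=\tfrac{e^{s}(2rs+1)}{1-2r}$, i.e.\ formula~(\ref{A:simple}).

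To finish I would verify that $(r,s)\mapsto(x(r,s),y(r,s))$ is a bona fide change of coordinates near the origin, so that~(\ref{A:simple}) genuinely encodes $v(x,y)$ as a power series. From~(\ref{A:y is simple}) and~(\ref{A:x is complicated}) one has $x(r,s)=r+\cdots$ and $y(r,s)=s+\cdots$, with all omitted terms of degree $\geq 2$ in $(r,s)$, so the Jacobian at $(r,s)=(0,0)$ equals $1$; thus the map is a local analytic isomorphism and the characteristics issuing from $\{y=0\}$ fill a neighborhood of the origin, as required. Two consistency checks close the argument: $r=0$ gives $x=0$, $y=s$, $v=e^{s}$, reproducing $v(0,y)=e^{y}$, and $s=0$ reproduces $v(x,0)=\tfrac{1}{1-2x}$.

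I expect the only real labor to be the integration producing~(\ref{A:x is complicated}) and the check that this rational expression is correct; the safest way to certify it is to differentiate the claimed formula and confirm $\partial_s y = 1-2x-4xy$ holds identically, noting that its numerator plainly vanishes at $s=0$. All the remaining ingredients---two separable ODEs, matching initial data, the Jacobian---are routine. One may also sidestep re-deriving the characteristics entirely: take~(\ref{A:y is simple}), (\ref{A:x is complicated}) and~(\ref{A:simple}) as an ansatz, invert the coordinate change, compute $\partial v/\partial x$ and $\partial v/\partial y$ by the chain rule, and verify that~(\ref{A:PDE2}) and the two boundary conditions are satisfied identically, reducing the proof to a finite mechanical computation.
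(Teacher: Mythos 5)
Your proposal is correct and follows essentially the same route as the paper: deriving the characteristic system (\ref{charcurve}) from the PDE (\ref{A:PDE2}), integrating $\partial_s x=-2x^2$ to get (\ref{A:y is simple}), solving the resulting linear ODE for $y$ to get (\ref{A:x is complicated}), and then integrating the separable $v$-equation with $v(r,0)=\tfrac{1}{1-2r}$ to obtain (\ref{A:simple}). Your added verifications (the non-characteristic/Jacobian check and the consistency check against $v(0,y)=e^{y}$) are sound refinements of the same argument rather than a different approach.
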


We finish this section with a remark.
\begin{Remark}\label{R:expensive}
Although we solved our PDE by using the 
useful method of characteristic curves, 
the answer is given as a function of 
transformed coordinates $r$ and $s$. 
Actually, we can find the solution in $x$ and $y$.
Indeed, it is clear from the outset
that the general solution $\tilde{S}(x,y)$ of (\ref{A:PDE2}) is given by 
\begin{align}\label{A:PdeSolt}
\tilde{S}(x,y) = \frac{ e^{1/(2x)} F(\frac{6xy+3x-1}{6x^3})}{x},
\end{align}
where $F(z)$ is some function in one-variable.
(This can easily be verified by substituting $\tilde{S}(x,y)$ 
into the PDE.)
Let us find a concrete expression for $F(z)$ here so that 
the initial condition $\tilde{S}(x,y)=v(x,y)$ holds true. 
To this end, we set $y=0$. In this case, we know that $v(x,0) = \frac{1}{1-2x}$.
Therefore, $F(z)$ satisfies the following equation:  
\begin{align}\label{A:Gsolt}
\frac{ e^{1/(2x)} F(\frac{3x-1}{6x^3})}{x} = \frac{1}{1-2x} \;\;\; \text{or} \;\;\;
F \biggr(\frac{3x-1}{6x^3} \biggr) = \frac{xe^{-1/(2x)}}{1-2x}.
\end{align}
The inverse of the transformation $z= \frac{3x-1}{6x^3}$ which appears in (\ref{A:Gsolt}) 
is given by 
\begin{align}\label{A:backtothefuture}
x=\frac{1}{6^{1/3} (-3 z^2 + \sqrt{3} \sqrt{-2 z^3 + 3 z^4})^{1/3}} 
+ \frac{(-3 z^2 + \sqrt{3} \sqrt{-2 z^3 + 3 z^4})^{1/3}}{6^{2/3} z}.
\end{align}
By back substitution of (\ref{A:backtothefuture}) into (\ref{A:Gsolt}), 
we find an expression for $F(z)$, which in turn will be evaluated at 
$\frac{6xy+3x-1}{6x^3}$ (as in (\ref{A:PdeSolt})). Obviously the resulting 
expression is very complicated, however, this way we can write the solution
of our PDE in $x$ and $y$ only. 
\end{Remark}

\section{A combinatorial interpretation}\label{S:Interpretation}

Recall our claim (Proposition~\ref{P:anotherpaththeorem}) 
from Introduction that one can compute the 
numbers $c_{p,q}$ as a sum 
$\sum_{L\in \mc{D}(p,q)} \omega(L)$,
where $\mc{D}(p,q)$ denotes the set of all 
Delannoy paths that ends at $(p,q)$. 
(Here $\omega(L)$ is the weight of the Delannoy path
$L$, which is defined in (\ref{A:weightofL}).)
Recall also that a weighted $(p,q)$ Delannoy path is a word of the form 
$W:=K_1\dots K_r$, where $K_i$'s $(i=1,\dots, r)$ 
are labeled steps $K_i=(L_i,m_i)$ such that 
\begin{itemize}
\item $L_1\dots L_r$ is a Delannoy path from $\mc{D}(p,q)$;
\item if $L_i$ ($1\leq i \leq r$) is a $k$-th 
diagonal step, then $2\leq m_i \leq  2k-1$.
\end{itemize}
Theorem~\ref{T:anotherpaththeorem}
states that there is a bijection between the set of 
weighted $(p,q)$ Delannoy paths and the set of ssymmetric $(2p,2q)$
clans. Our goal in this section is to prove these statements.

\begin{proof}[Proof of Proposition~\ref{P:anotherpaththeorem}]

Let $c_{p,q}'$ denote the sum $\sum_{L\in \mc{D}(p,q)} \omega(L)$.
As a convention we define $c_{0,0}'=1$. 
Recall that $n$ stands for $p+q$. 
We prove our claim $c_{p,q}'=c_{p,q}$ by induction on $n$. 
Obviously, if $n=1$, then $(p,q)$ is either $(0,1)$ or $(1,0)$,
and in both of these cases, there is only one step which
either $N$ or $E$. Therefore, $c_{p,q}'=1$ in this case. 
Now, let $n$ be a positive integer and we assume 
that our claim is true for all $(p,q)$ with $(p,q)=n$. 
We will prove that $c_{p,q}=c_{p,q}'$, whenever $p+q=n+1$. 
To this end, we look at the possibilities for the ending step 
of a Delannoy path $L=L_1\dots L_r\in \mc{D}(p,q)$. 
If $L_r$ is a diagonal step, then 
$$
\omega(L) = (2(p+q)-1)\omega(L_1\dots L_{r-1}).
$$
In particular, $L_1\dots L_{r-1} \in \mc{D}(p-1,q-1)$. 
If $L_r$ is from $\{N,E\}$, then 
$$
\omega(L) = \omega(L_1\dots L_{r-1}).
$$
In particular, $L_1\dots L_{r-1} \in \mc{D}(p-1,q)$ or
$L_1\dots L_{r-1} \in \mc{D}(p,q-1)$. depending 
on $L_r=E$ or $L_r=N$. 
We conclude from these observations that 
\begin{align*}
c_{p,q}' &= c_{p-1,q}' + c_{p,q-1}'+2(p+q-1) c_{p-1,q-1}' \\
&= c_{p-1,q} + c_{p,q-1} + 2(p+q-1) c_{p-1,q-1}\qquad \text{(by induction
hypothesis)}\\
&= c_{p,q}.
\end{align*}
This finishes the proof of our claim. 
\end{proof}

The proof of Theorem~\ref{T:anotherpaththeorem} 
is based on the same idea however it requires
more attention in some of the constructions that are involved.

\begin{proof}[Proof of Theorem~\ref{T:anotherpaththeorem}]
Let $d_{p,q}$ denote the cardinality of $\mc{D}^w(p,q)$. 
We will prove that $d_{p,q}$ 
obeys the same recurrence as $c_{p,q}$'s and it satisfies the 
same initial conditions.

Let $\gamma = c_1\dots c_{2n}$ be a ssymmetric $(2p,2q)$
clan and let $\pi=\pi_\gamma$ denote the signed involution 
$$
\pi =(i_1, j_1) \dots (i_{2k},j_{2k}) l_1^{s_1} \dots l_{2n-4k}^{s_{2n-4k}},\
 \text{ where } s_1,\dots, s_{2n-4k}\in \{+,-\},
$$ 
which is given by $\pi=\widetilde{\varphi}(\gamma)$ (Here, 
$\widetilde{\varphi}$ is the map that is constructed 
in the proof of Lemma~\ref{L:clanstoinvolutions2}.)
We will construct a weighted $(p,q)$ Delannoy path $W=W_\gamma$
which is uniquely determined by $\pi$. 

First, we look at the position of $2n$ in $\pi$. 
If it appears as a fixed point with a $+$ sign, 
then we draw an $E$ step between $(p,q)$ and $(p-1,q)$. 
If it appears as a fixed point with a $-$ sign, then we draw a 
an $N$-step between $(p,q)$ and $(p,q-1)$. We label
both of these steps by 1 to turn them into labeled steps.
Next, we remove the fixed points $1$ and $2n$ from $\pi$
and then subtract 1 from each remaining entry. 
The result is a either signed $(2(p-1),2q)$ involution 
or a signed $(2p,2(q-1))$ involution. 
Now, by our induction hypothesis, in the first case, there are 
$d_{p-1,q}$ possible ways of extending this path to 
a weighted $(p,q)$ Delannoy path. In a similar manner,
in the latter case there are $d_{p,q-1}$ possible ways of extending 
it to a weighted $(p,q)$ Delannoy path. 

Now we assume that $2n$ appears in a 2-cycle in $\pi$, say $(i_s,j_s)$,
where $1\leq s \leq k$. Then $(i_r,j_r)=(i,2n)$, for some $i\in \{2,\dots, 2n-1\}$. 
Then by the symmetry condition, there is a partnering
2-cycle, which is necessarily of the form $(1,i' )$ for some $i'$.
In this case,
we draw a $D$-step between $(p,q)$ and $(p-1,q-1)$ and we 
label this step by $i$. 
Then we remove the two cycle $(i,2n)$ as well
as its partner $(1,i')$ from $\pi$.
Let us denote the resulting object by $\pi^{(1)}_0$. 
To get rid of the gaps created by the removal of two 2-cycles, 
we renormalize the remaining entries by appropriately 
subtracting numbers so that the resulting object, 
which we denote by $\pi^{(1)}$ has every number from 
$\{1,\dots, 2n-4\}$ appears in it exactly once. It is easy to 
see that we have a signed $(2(p-1),2(q-1))$ involution 
which corresponds to a ssymmetric $(2(p-1),2(q-1))$ clan 
under $\widetilde{\varphi}^{-1}$. Now, the label of 
this diagonal step can be chosen as one the 
$2(p+q-1)$ numbers from $\{2,\dots, 2n-1\}$. 
Finally, let us note that there are $d_{p-1,q-1}$ 
possible ways to extend this labeled diagaonal step 
to a weighted $(p,q)$ Delannoy path. 

Combining our observations we see that, 
starting with a random ssymmetric $(2p,2q)$ clan, 
there are exactly 
\begin{align}\label{A:total}
d_{p-1,q}+d_{p,q-1} + 2(p+q-1) d_{p-1,q-1}
\end{align}
possible weight Delannoy paths that we can 
construct. By induction hypothesis the number (\ref{A:total})
is equato $c_{p,q}$.
This finishes our proof. 

\end{proof}

Let us illustrate our construction by an example.

\begin{Example}
Let $\gamma$ denote the ssymmetric $(10,6)$ clan 
$$
\gamma = 4\ +\ 6\ -\ +\ 1\ 1\ +\ +\ 2\ 2\ +\ -\ 4\ +\ 6,
$$
and let $\pi$ denote the corresponding signed involution 
$$
\pi = (1,14)(3,16)(6,7)(10,11) 2^+ 4^- 5^+ 8^+ 9^+ 12^+ 13^-15^+.
$$ 
The steps of our constructions 
are shown in Figure~\ref{F:last pic}.
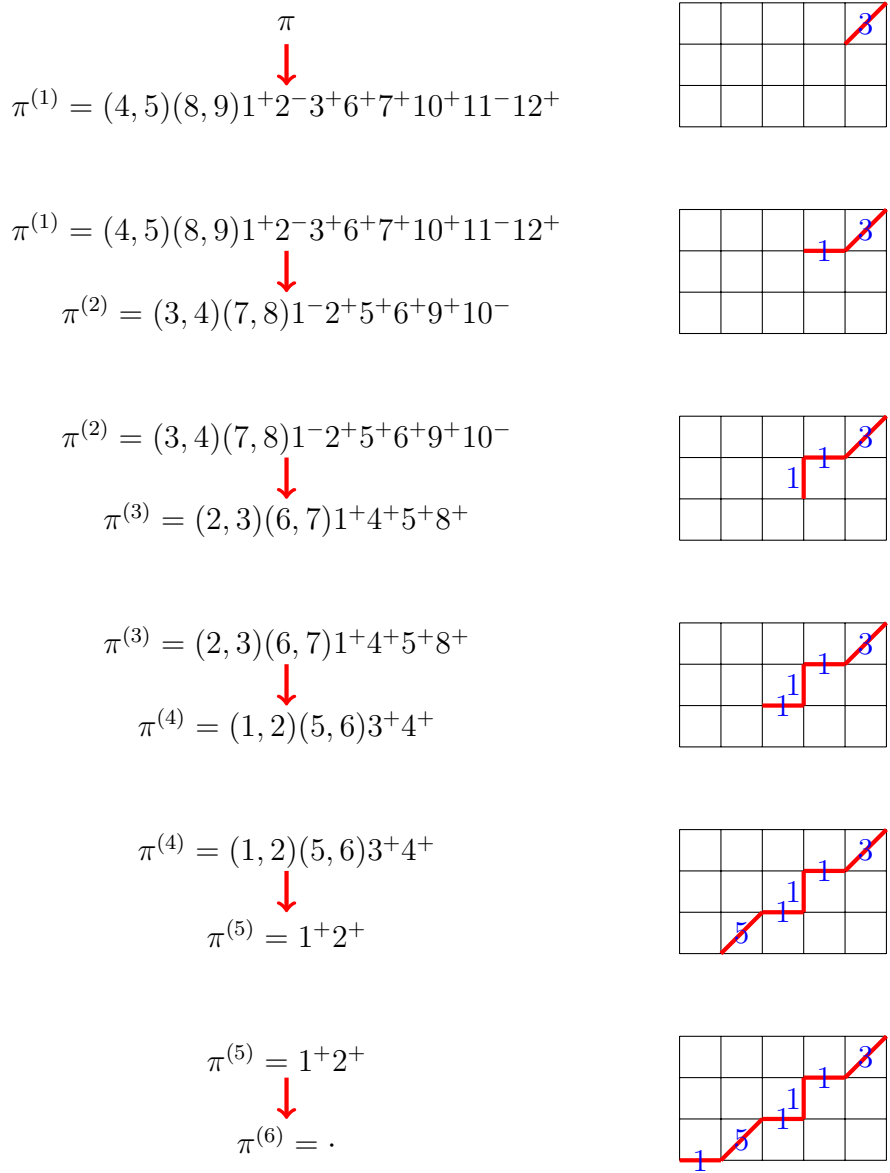
\begin{figure}[htp]
\begin{center}
\begin{tikzpicture}[scale=.55]
\begin{scope}[xshift = -5cm]
\node at (0,2.5) {$\pi$};
\node at (0,.5) {$\pi^{(1)}= (4,5)(8,9)1^+ 2^- 3^+ 6^+ 7^+ 10^+ 11^- 12^+$};
\draw[ultra thick,red, ->] (0,2) to (0,1);
\end{scope}
\begin{scope}[xshift = 4.5cm]
\draw (0,0) grid (5,3);
\draw[ultra thick,red] (5,3) to (4,2);
\node[blue] at (4.5, 2.5) {$3$};
\end{scope}
\begin{scope}[xshift = -5cm,yshift=-5cm]
\node at (0,2.5)  {$\pi^{(1)}= (4,5)(8,9)1^+ 2^- 3^+ 6^+ 7^+ 10^+ 11^- 12^+$};
\node at (0,.5) {$\pi^{(2)}= (3,4)(7,8)1^- 2^+ 5^+ 6^+ 9^+ 10^-$};
\draw[ultra thick,red, ->] (0,2) to (0,1);
\end{scope}
\begin{scope}[xshift = 4.5cm,yshift=-5cm]
\draw (0,0) grid (5,3);
\draw[ultra thick,red] (5,3) to (4,2);
\draw[ultra thick,red] (4,2) to (3,2);
\node[blue] at (4.5, 2.5) {$3$};
\node[blue] at (3.5, 2) {$1$};
\end{scope}
\begin{scope}[xshift = -5cm,yshift=-10cm]
\node at (0,2.5) {$\pi^{(2)}= (3,4)(7,8)1^- 2^+ 5^+ 6^+ 9^+ 10^-$};
\node at (0,.5) {$\pi^{(3)}= (2,3)(6,7)1^+ 4^+ 5^+ 8^+$};
\draw[ultra thick,red, ->] (0,2) to (0,1);
\end{scope}
\begin{scope}[xshift = 4.5cm,yshift=-10cm]
\draw (0,0) grid (5,3);
\draw[ultra thick,red] (5,3) to (4,2);
\draw[ultra thick,red] (4,2) to (3,2);
\draw[ultra thick,red] (3,2) to (3,1);
\node[blue] at (4.5, 2.5) {$3$};
\node[blue] at (3.5, 2) {$1$};
\node[blue] at (2.75, 1.5) {$1$};
\end{scope}
\begin{scope}[xshift = -5cm,yshift=-15cm]
\node at (0,2.5) {$\pi^{(3)}= (2,3)(6,7)1^+ 4^+ 5^+ 8^+$};
\node at (0,.5) {$\pi^{(4)}= (1,2)(5,6) 3^+ 4^+$};
\draw[ultra thick,red, ->] (0,2) to (0,1);
\end{scope}
\begin{scope}[xshift = 4.5cm,yshift=-15cm]
\draw (0,0) grid (5,3);
\draw[ultra thick,red] (5,3) to (4,2);
\draw[ultra thick,red] (4,2) to (3,2);
\draw[ultra thick,red] (3,2) to (3,1);
\draw[ultra thick,red] (3,1) to (2,1);
\node[blue] at (4.5, 2.5) {$3$};
\node[blue] at (3.5, 2) {$1$};
\node[blue] at (2.75, 1.5) {$1$};
\node[blue] at (2.5, 1) {$1$};
\end{scope}
\begin{scope}[xshift = -5cm,yshift=-20cm]
\node at (0,2.5) {$\pi^{(4)}= (1,2)(5,6) 3^+ 4^+$};
\node at (0,.5) {$\pi^{(5)}= 1^+ 2^+$};
\draw[ultra thick,red, ->] (0,2) to (0,1);
\end{scope}
\begin{scope}[xshift = 4.5cm,yshift=-20cm]
\draw (0,0) grid (5,3);
\draw[ultra thick,red] (5,3) to (4,2);
\draw[ultra thick,red] (4,2) to (3,2);
\draw[ultra thick,red] (3,2) to (3,1);
\draw[ultra thick,red] (3,1) to (2,1);
\draw[ultra thick,red] (2,1) to (1,0);
\node[blue] at (4.5, 2.5) {$3$};
\node[blue] at (3.5, 2) {$1$};
\node[blue] at (2.75, 1.5) {$1$};
\node[blue] at (2.5, 1) {$1$};
\node[blue] at (1.5, .5) {$5$};
\end{scope}
\begin{scope}[xshift = -5cm,yshift=-25cm]
\node at (0,2.5) {$\pi^{(5)}= 1^+ 2^+$};
\node at (0,.5) {$\pi^{(6)}= \cdot$};
\draw[ultra thick,red, ->] (0,2) to (0,1);
\end{scope}
\begin{scope}[xshift = 4.5cm,yshift=-25cm]
\draw (0,0) grid (5,3);
\draw[ultra thick,red] (5,3) to (4,2);
\draw[ultra thick,red] (4,2) to (3,2);
\draw[ultra thick,red] (3,2) to (3,1);
\draw[ultra thick,red] (3,1) to (2,1);
\draw[ultra thick,red] (2,1) to (1,0);
\draw[ultra thick,red] (1,0) to (0,0);
\node[blue] at (4.5, 2.5) {$3$};
\node[blue] at (3.5, 2) {$1$};
\node[blue] at (2.75, 1.5) {$1$};
\node[blue] at (2.5, 1) {$1$};
\node[blue] at (1.5, .5) {$5$};
\node[blue] at (.5, 0) {$1$};
\end{scope}
\end{tikzpicture}
\end{center}
\caption{Algorithmic construction of the bijection onto weighted Delannoy paths.}
\label{F:last pic}
\end{figure}

\end{Example}

\newpage

\section{Appendix}\label{S:Appendix}

In this appendix, as we promised in the introduction,
we outline a method for approximating the number of 
symmetric $(2p,2q+1)$ clans with $k$ pairs, $\beta_{k,p,q}$.
Recall our notation that $A_e(x) = \sum_{l=0}^{q} \beta_{2l,p,q} x^{2l}$ 
and $A_o(x) = \sum_{l=0}^{q} \beta_{2l+1,p,q} x^{2l+1}$.

First of all, by multiplying both sides of the recurrence 
relation (\ref{A:kevenfr}) by $x^{2l}$ and summing over $l$ lead us 
to the following integral/differential equation  
\begin{align*}
A_e(x)-\beta_{0,p,q} =& (q+1) \int (A_o(x) - \beta_{2q+1,p,q} x^{2q+1})dx - \frac{x}{2}(A_o(x) - \beta_{2q+1,p,q} x^{2q+1}) \\
+& 2(pq+p+q+1)\int x(A_e(x)- \beta_{2q,p,q} x^{2q})dx \\
-& (p+q+1)x^2 (A_e(x) - \beta_{2q,p,q} x^{2q}) 
+\frac{x^3}{2}A_e^{'}(x) - q\beta_{2q,p,q} x^{2q+2}.
\end{align*}
We get rid of the integrals by taking the derivative with respect to $x$
and then we reorganize our equation which is now a second order ODE as in
\begin{align*}
x^3A_e^{''}(x) - \biggr((2p+2q-1)x^2 +2\biggr)A_e^{'}(x) +4pq x A_e(x) - x A_o^{'}(x) + (2q+1)A_0(x)
 =0.
\end{align*}
By applying a similar procedure to the recurrence relation (\ref{A:koddfr}) 
and also by using the fact that $\beta_{1,p,q} = p \beta_{0,p,q}$, 
we obtain our second second order ODE: 
\begin{align*}
x^3A_o^{''}(x) - \biggr((2p+2q-1)x^2 + 2\biggr)A_o^{'}(x) +(4pq+2p-2q-1)x A_o(x)
- xA_e^{'}(x) + 2p A_e(x) =0.
\end{align*}
Note that we the following initial conditions that follow from 
the definitions of $A_e(x)$ and $A_o(x)$:
\begin{align*}
A_e(0) =&\beta_{0,p,q} \;  \text{and}\; A_o(0) = 0,  \\
A_e^{'}(0)=& 0 \; \text{and} \;  A_o^{'}(0) = p\beta_{0,p,q} = \beta_{1,p,q}.
\end{align*}
We will reduce our second order system to a first order ODE 
by setting $u(x):=A_e^{'}(x)$ and $v(x):=A_o^{'}(x)$. Then 
\begin{align*}
x^3 u'(x) =& ((2p+2q-1)x^2 +2 )u(x)-4pqx A_e(x) - (2q+1)A_0(x) + xv(x) \\
x^3 v'(x) =& ((2p+2q-1)x^2 +2) v(x)- (4pq+2p-2q-1)x A_o(x) -2p A_e(x) + x u(x) \\
x^3 A_e^{'}(x) =& x^3 u(x) \\
x^3 A_o^{'}(x) =& x^3 v(x).
\end{align*}
We write this system in matrix form 
$$
x^3 X' = A(x) X,
$$ 
where $A(x)$ the $4\times 4$ matrix as in (\ref{A:mainODE}).
Note that our initial conditions become 
\begin{align}\label{A:ODE1}
X(0) = \begin{bmatrix}
u(0)\\
v(0) \\
A(0)\\
A(0)
\end{bmatrix} =
\begin{bmatrix}
0\\
p\beta_{0,p,q} \\ 
\beta_{0,p,q}\\
0
\end{bmatrix}. 
\end{align}
Once a system of first order ordinary differential equations of this type is given, 
formal series solutions can always be obtained by carrying out the computational procedure, 
which is outlined in \cite{Tur}. 
We will use those techniques to solve the above system of first order ordinary differential equations.

Before proceeding any further let us define the matrices $A_0,A_1,\dots$ 
by decomposing the coefficient matrix $A(x)$: 
\begin{align*}
A(x) =& \sum_{k= 0}^{\infty} A_k x^k = 
\begin{bmatrix}
2 & 0 & 0 & -(2q+1) \\
0 & 2 & -2p & 0 \\
0 & 0 & 0 & 0 \\
0 & 0 & 0 & 0
\end{bmatrix} x^0
+ \begin{bmatrix}
0 & 1 & -4pq & 0 \\
1 & 0 & 0 & -(4pq+2q-2q-1) \\
0 & 0 & 0 & 0 \\
0 & 0 & 0 & 0
\end{bmatrix} x^1 \\
+& \begin{bmatrix}
(2q+2q-1) & 0 & 0 & 0 \\
0 & (2p+2q-1) & 0 & 0 \\
0 & 0 & 0 & 0 \\
0 & 0 & 0 & 0
\end{bmatrix} x^2
+ \begin{bmatrix}
0 & 0 & 0 & 0\\
0 & 0 & 0 & 0 \\
1 & 0 & 0 & 0 \\
0 & 1 & 0 & 0
\end{bmatrix} x^3 + \boldsymbol{0}x^4 + \dots.
\end{align*}
Since the eigenvalues of the leading matrix $A_0$ fall into two groups, 
namely $\lambda_1 = \lambda_2 = 0$ and $\lambda_3 = \lambda_4 = 2$, 
there exists a normalizing transformation matrix $P$ obtained from the 
Jordan canonical form of $A_0$. More precisely, since
\begin{align*}
B_0 =& \begin{bmatrix}
0 & 0 & 0 & 0 \\
0 & 0 & 0 & 0 \\
0 & 0 & 2 & 0 \\
0 & 0 & 0 & 2
\end{bmatrix}\\
 =&
\begin{bmatrix}
0 & 0 & 0 & 1 \\
0 & 0 & 1 & 0 \\
0 & 1 & -p & 0 \\ 
1 & 0 & 0 & -\frac{2q+1}{2}
\end{bmatrix}
\begin{bmatrix}
2 & 0 & 0 & -(2q+1) \\
0 & 2 & -2p & 0 \\
0 & 0 & 0 & 0 \\
0 & 0 & 0 & 0
\end{bmatrix}
\begin{bmatrix}
\frac{2q+1}{2} & 0 & 0 & 1 \\
0 & p & 1 & 0 \\
0 & 1 & 0 & 0 \\
1 & 0 & 0 & 0 \\
\end{bmatrix} \\
=& P^{-1} A_0 P.
\end{align*}
the normalizing transformation $X = PY$ turns our system into 
\begin{align}\label{A:ODE2}
x^3 Y^{'} = B(x) Y; \;\;\text{with} \;\;
Y(0) = \begin{bmatrix}
0 \\
\beta_{0,p,q} \\
0 \\
0 
\end{bmatrix}
\end{align}
where
\begin{align*}
B(x) =& P^{-1} A(x) P \\
= &
\begin{bmatrix}
0 & px^3 & x^3 & 0 \\
\frac{2q+1}{2}x^3 & 0 & 0 & x^3 \\
-\frac{2q + 1}{2} (px^3 +4px - 3x) & p(2p+2q-1)x^2 & (2p+2q-1)x^2+2 & x- px^3 \\
\frac{(2q+1)(2p+2q-1)}{2}x^2 & (p-4pq)x - \frac{p(2q+1)}{2}x^2 & x- \frac{2q+1}{2}x^3 & (2p+2q-1)x^2+2
\end{bmatrix}\\
=& \begin{bmatrix}
0 & 0 & 0 & 0\\
0 & 0 & 0 & 0\\
0 & 0 & 2 & 0\\
0 & 0 & 0 & 2
\end{bmatrix}
+ \begin{bmatrix}
0 & 0 & 0 & 0 \\
0 & 0 & 0 & 0 \\
-\frac{(2q+1)(4p-3)}{2} & 0 & 0 & 1 \\
0 & p(1- 4q) & 1 & 0
\end{bmatrix} x \\
+& \begin{bmatrix}
0 & 0 & 0 & 0 \\
0 & 0 & 0 & 0\\
0 & p(2p+2q-1) & 2p+2q-1 & 0 \\
\frac{(2q+1)(2p+2q-1)}{2} & 0 & 0 & 2p+2q-1 
\end{bmatrix} x^2\\
+& \begin{bmatrix}
0 & p & 1 & 0 \\
\frac{2q+1}{2} & 0 & 0 & 1 \\
-\frac{p(2q+1)}{2} & 0 & 0 & -p \\
0 & -\frac{p(2q+1)}{2} & -\frac{2q+1}{2} & 0
\end{bmatrix} x^3 + \boldsymbol{0}x^4 + \boldsymbol{0} x^5 + \dots 
\end{align*}
We denote the coefficient matrix of 
$x^i$ ($i=0,1,2,\dots$) in $B(x)$ by $B_i$. Thus, 
$$
B(x) = B_0 + B_1 x + B_2 x^2 +B_3 x^3.
$$

\vspace{.5cm}

We will work with a system that is obtained from $B(x)$ by 
a ``shearing'' transformation.
Let $Q$ be a formal power series of the form $Q=\sum Q_r x^r$ 
with $Q_r$'s are some constant matrices of order $4$.
We assume that our desired solution $Y=Y(x)$ 
for $x^3 Y' = B Y$ is of the form $Y=QZ$ for some $4\times 1$ column 
matrix $Z=Z(x)$. 
Formally substituting $QZ$ into $x^3 Y' = B(x) Y$ 
will give us a new ODE:
$$
x^3 (QZ)' = B QZ \Rightarrow x^3(Q'Z+QZ') = BQZ\  \text{ or } \ x^3 Z' = (Q^{-1} B Q+ x^3 Q^{-1}Q')Z.
$$
Let $C$ denote the formal power series $\sum C_r x^r$ that is defined by 
\begin{align}\label{A:Clidiff0}
Q^{-1} B Q+ x^3 Q^{-1}Q' = C = \sum C_r x^r,
\end{align}
hence our ODE is equivalent to 
\begin{align}\label{A:Clidiff}
x^3 Z' &= C Z. 
\end{align}
By multiplying both sides of (\ref{A:Clidiff0}) with $Q$ and rearranging 
we obtain a new ODE whose solution will lead to a solution of (\ref{A:realnewone}):
\begin{align}\label{A:realnewone}
x^3 Q' = Q C - B Q.
\end{align}
To solve (\ref{A:realnewone}) we simply substitute 
$B=\sum B_r x^r$, $Q=\sum Q_r x^r$ and $C=\sum C_r x^r$
and the equate coefficients. Then we get the following relations
which we call as our fundamental recurrences. 
\begin{enumerate}
\item[(i)] $0= Q_0 C_0 - B_0 Q_0$;
\item[(ii)] $0= (Q_0 C_1 - B_1 Q_0)+(Q_1C_0 - B_0Q_1)$;
\item[(iii)] $(r-2)Q_{r-2} = \sum_{i=0}^r (Q_i C_{r-i} - B_{r-i} Q_i)$  for $r\geq 2$. 
\end{enumerate}
We will recursively assign specific values to 
the matrices $Q_i$, $i=0,1,2,\dots$ which
will allow us to solve (\ref{A:realnewone}).
Along the way we will determine the 
series $C(x)=\sum x^i C_i$, which is what 
we want to solve in the first place. 
Indeed, our goal is to choose $Q_i$'s
in such a way that $C_i$'s  
become block diagonal. 
To this end, we assume that $Q_i$ ($i=0,1,2,\dots$)
is a block anti-diagonal matrix: 
\begin{align*}
Q_i = 
\begin{bmatrix}
0 & Q_i^{12} \\
Q_i^{21} & 0
\end{bmatrix} 
\end{align*}
for some $2\times 2$ matrices $Q_i^{12},Q_i^{21}$ ($i=0,1,2,\dots$). 
\vspace{.5cm}

{\em Step 1.} We choose $Q_0=I_4$,
the $4\times 4$ identity matrix. It follows from (i) 
that 
$$
C_0 = B_0
= \begin{bmatrix}
0 & 0 & 0 & 0\\
0 & 0 & 0 & 0\\
0 & 0 & 2 & 0\\
0 & 0 & 0 & 2
\end{bmatrix}.
$$
We have a remark in order.
\begin{Remark}\label{R:alwaysblockdiagonal}
Let us point out that, since 
\begin{align}\label{A:for future use}
Q_i B_0 -C_0 C_i &= 
\begin{bmatrix}
0 & 2 Q_i^{12} \\
-2Q_i^{21} & 0
\end{bmatrix} \ \text{ for } i=1,2,\dots
\end{align}
by using the fundamental recurrences (ii) and (iii)
we will always be able to choose $Q_i^{12}$ and 
$Q_i^{21}$ so that $C_i$ is of the form 
$$
C_i =
\begin{bmatrix}
C_i^{11} &0 \\
0 & C_i^{22}
\end{bmatrix},
$$
where $C_i^{11}$ and $C_i^{22}$ are $2\times2$ 
matrices.
\end{Remark}

\vspace{.5cm}

{\em Step 2.} By (ii) and Step 1, $C_1 = B_1 -Q_1 C_0+ B_0 Q_1$, so
we set 
\begin{align*}
Q_1 = \begin{bmatrix}
0 & 0 & 0 & 0 \\
0 & 0 & 0 & 0 \\
-\frac{(2q+1)(4p-1)}{4} & 0 & 0 & 0 \\
0 & \frac{p(4q-1)}{2} & 0 & 0
\end{bmatrix}
\implies
C_1 = \begin{bmatrix}
0 & 0 & 0 & 0 \\
0 & 0 & 0 & 0 \\
0 & 0 & 0 & 1 \\
0 & 0 & 1 & 0 
\end{bmatrix}.
\end{align*}

\vspace{.5cm}

{\em Step 3.} By (iii) and Steps 1,2, $C_2 = B_2 -Q_1 C_1+ B_1 Q_1 -Q_2C_0 + B_0 Q_2$, 
so we set 
\begin{align*}
Q_2 = \begin{bmatrix}
0 & 0 & 0 & 0 \\
0 & 0 & 0 & 0 \\
0 & -\frac{p(4p+8q-3)}{4} & 0 & 0 \\
-\frac{(2q+1)(4q-1)}{8} & 0 & 0 & 0
\end{bmatrix}
\implies
C_2 = \begin{bmatrix}
0 & 0 & 0 & 0 \\
0 & 0 & 0 & 0 \\
0 & 0 & 2p+2q-1 & 0 \\
0 & 0 & 0 & 2p+2q-1 
\end{bmatrix}.
\end{align*}
In a similar manner, we put 
\begin{align*}
Q_3 = \begin{bmatrix}
0 & 0 & \frac{1}{2}  & 0\\
0 & 0 & 0 & \frac{1}{2}  \\
\frac{(2q+1)(16p^2+16pq-1)}{16} & 0 & 0 & 0 \\
0 & \frac{-p(16pq+16q^2-8p-16q+1)}{8} & 0 & 0
\end{bmatrix}
\implies
C_3 = \begin{bmatrix}
0 & p & 0 & 0 \\
\frac{2q+1}{2} & 0 & 0 & 0 \\
0 & 0 & 0 & -p \\
0 & 0 & -\frac{2q+1}{2} & 0 
\end{bmatrix}.
\end{align*}

The above computations are in some
sense are our initial conditions. 
To get a better understanding of the general case we make 
a few more preliminary observations and formal computations.
\begin{align}\label{A:1 of many}
C_i^{jj} = B_i^{jj} \ \text{ for } i =0,1,2,3 \ \text{ and } \ j=1,2.
\end{align}
\begin{align}
Q_iC_1 = 
\begin{bmatrix}
0 & Q_i^{12} C_1^{22} \\
0 & 0 
\end{bmatrix} \hspace{.5cm} \text{ and }  \hspace{.5cm}
B_1Q_i = 
\begin{bmatrix}
0 & 0 \\
B_1^{22}Q_i^{21} & B_1^{21}Q_i^{12} 
\end{bmatrix} 
\end{align}

\begin{align}\label{A:2 of many}
Q_iC_2 = 
\begin{bmatrix}
0 & Q_i^{12} C_2^{22} \\
0 & 0 
\end{bmatrix} \hspace{.5cm} \text{ and }  \hspace{.5cm}
B_2Q_i = 
\begin{bmatrix}
0 & 0 \\
B_2^{22}Q_i^{21} & B_2^{21}Q_i^{12} 
\end{bmatrix} 
\end{align}

\begin{align}\label{A:3 of many}
Q_iC_3 = 
\begin{bmatrix}
0 & Q_i^{12} C_3^{22} \\
Q_i^{21} C_3^{11} & 0 
\end{bmatrix} \hspace{.5cm} \text{ and }  \hspace{.5cm}
B_3Q_i = 
\begin{bmatrix}
B_3^{12}Q_i^{21} & B_3^{11} Q_i^{12} \\
B_3^{22}Q_i^{21} & B_3^{21}Q_i^{12} 
\end{bmatrix} 
\end{align}

\begin{align}\label{A:4 of many}
Q_iC_j = 
\begin{bmatrix}
0 & Q_i^{12} C_j^{22} \\
Q_i^{21} C_j^{11} & 0 
\end{bmatrix}
\end{align}

Finally, since $B_r = 0$, the fundamental recurrence (iii)
simplifies to 
\begin{align}\label{A:friii}
C_r = (r-2)Q_{r-2}  
-\left(\sum_{i=0}^3 (Q_{r-i} C_i - B_i Q_{r-i})\right) - \left(\sum_{i=4}^{r-1} Q_{r-i} C_i  \right).
\end{align}

\vspace{1cm}

Recall that we started with the system $x^3 X' = A(x) X$ 
which is transformed into $x^3 Y' = B(x) Y$ by conjugating 
with a constant matrix, and the latter system is transformed 
into $x^3 Z' = C(x) Z$ by the shearing transformation $Y= Q(x) Z$. 

\begin{Proposition}\label{L:first simplification}
Let $C(x) = \sum_r C_r x^r$ and $Q(x) = \sum_r Q_r x^r$ 
be as in the previous paragraph. If $r\geq 4$, then 
we have 
$$
C_r =
\begin{bmatrix}
Q_{r-3}^{21} & 0 \\
0 & B_1^{21} Q_{r-1}^{12} + B_2^{21} Q_{r-2}^{12} + B_3^{21} Q_{r-3}^{12}
\end{bmatrix}.
$$
In particular, the system $x^3 Z' = C(x) Z$ decomposes
into two $2\times 2$ systems of ODE's 
\begin{align}
x^3 K' &= R(x) K   \label{A:CtoR}\\
x^3 L' &= T(x) L  \label{A:CtoT}
\end{align}
where 
\begin{align*}
R(x) &= C_3^{11} x^3 + \sum_{r\geq 4} Q_{r-3}^{21} x^r; \\
T(x) &= \sum_{i=0}^3 C_i^{22}x^3 
+  \sum_{r\geq 4} (B_1^{21} Q_{r-1}^{12} + B_2^{21} Q_{r-2}^{12} + B_3^{21} Q_{r-3}^{12}) x^r.
\end{align*}
\end{Proposition}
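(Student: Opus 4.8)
The plan is to prove Proposition~\ref{L:first simplification} by an induction on $r$ that simultaneously tracks the block-anti-diagonal shape of each $Q_r$ and the block-diagonal shape of each $C_r$, feeding this structural information into the simplified fundamental recurrence (\ref{A:friii}). The base cases $r=0,1,2,3$ have already been computed explicitly in Steps~1--3 and the display immediately afterward, so the claimed formula for $C_r$ only needs to be checked for $r\ge 4$, where $B_r=0$ and (\ref{A:friii}) applies. First I would expand the right-hand side of (\ref{A:friii}) using the block identities (\ref{A:4 of many}) (and its specializations (\ref{A:2 of many}), (\ref{A:3 of many})): for $i=0,1,2$ the matrices $C_i$ and $B_i$ are \emph{block diagonal}, so a product such as $Q_{r-i}C_i$ or $B_iQ_{r-i}$ stays block anti-diagonal; only $C_3$ and $B_3$ have off-(block-)diagonal pieces, and conjugating a block-anti-diagonal $Q_{r-3}$ against those, together with the term $(r-2)Q_{r-2}$ which is block anti-diagonal, produces the block-diagonal contributions to $C_r$.

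The heart of the argument is the bookkeeping of which summands land on the block diagonal versus the block anti-diagonal. Writing $Q_m=\begin{bmatrix}0 & Q_m^{12}\\ Q_m^{21}&0\end{bmatrix}$, the block-diagonal part of $\sum_{i=0}^3(Q_{r-i}C_i-B_iQ_{r-i})$ collects exactly the contributions from $i=3$ (via $Q_{r-3}^{21}C_3^{11}$ in the $(1,1)$ block, and via $B_1^{21}Q_{r-1}^{12}+B_2^{21}Q_{r-2}^{12}+B_3^{21}Q_{r-3}^{12}$ in the $(2,2)$ block once one also accounts for the $i=1,2$ terms which are purely lower-block-anti-diagonal after the block multiplication), while everything else is block anti-diagonal and is absorbed into the \emph{choice} of $Q_r$ (this is precisely the mechanism flagged in Remark~\ref{R:alwaysblockdiagonal} and equation (\ref{A:for future use}): the off-diagonal freedom $Q_rB_0-C_0C_r=\begin{bmatrix}0 & 2Q_r^{12}\\ -2Q_r^{21}&0\end{bmatrix}$ lets us kill the block-anti-diagonal remainder). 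Matching the $(1,1)$ and $(2,2)$ blocks then yields
\[
C_r^{11}=Q_{r-3}^{21},\qquad C_r^{22}=B_1^{21}Q_{r-1}^{12}+B_2^{21}Q_{r-2}^{12}+B_3^{21}Q_{r-3}^{12},
\]
which is the asserted formula. Note that the induction is genuinely \emph{recursive in the other direction} as well: the equations for $r$ define $Q_r^{12},Q_r^{21}$ in terms of earlier data, so the hypothesis ``$Q_m$ is block anti-diagonal for $m<r$'' is maintained.

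The second assertion — that $x^3Z'=C(x)Z$ splits into (\ref{A:CtoR}) and (\ref{A:CtoT}) — is then immediate: since every $C_r$ ($r\ge 0$) is block diagonal (base cases plus the inductive step just established), the full coefficient series $C(x)=\sum_r C_rx^r$ is block diagonal, so writing $Z=\begin{bmatrix}K\\ L\end{bmatrix}$ decouples the system into the $(1,1)$-block system with $R(x)=\sum_{r\ge 3}C_r^{11}x^r = C_3^{11}x^3+\sum_{r\ge4}Q_{r-3}^{21}x^r$ and the $(2,2)$-block system with $T(x)=\sum_{r\ge3}C_r^{22}x^r = \big(\sum_{i=0}^3 C_i^{22}\big)x^3 + \sum_{r\ge4}(B_1^{21}Q_{r-1}^{12}+B_2^{21}Q_{r-2}^{12}+B_3^{21}Q_{r-3}^{12})x^r$, exactly as stated (using $C_r^{11}=0$ for $r\le 2$, so the $K$-system starts at $x^3$).

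I expect the main obstacle to be purely organizational rather than conceptual: one must be scrupulous about the order of block multiplication (left versus right multiplication by $Q_i$ swaps which block of $Q_i$ hits which block of the other matrix, as (\ref{A:2 of many})--(\ref{A:4 of many}) already record) and about the index shifts in (\ref{A:friii}), making sure the $i=1,2$ terms really do contribute nothing to the block diagonal and that the term $(r-2)Q_{r-2}$ is correctly classified. Verifying that the $Q_r$ produced by this scheme are consistent — i.e. that the block-anti-diagonal equation is solvable at every stage, which it is because of (\ref{A:for future use}) — is the one place where Remark~\ref{R:alwaysblockdiagonal} must be invoked rather than re-derived.
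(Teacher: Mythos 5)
Your overall strategy is the same as the paper's: solve the simplified recurrence (\ref{A:friii}) for $C_r$, sort the summands into block-diagonal and block-anti-diagonal parts, use the freedom in $Q_r$ (Remark~\ref{R:alwaysblockdiagonal} and (\ref{A:for future use})) to kill the anti-diagonal part, and then read off the two diagonal blocks; the decoupling of $x^3Z'=C(x)Z$ once every $C_r$ is block diagonal is also handled correctly. The problem is that the bookkeeping itself --- which is the entire content of the proposition --- is carried out incorrectly at the decisive points, and if one follows your classification of terms literally one gets a formula different from the one you finally assert. First, $B_1$ and $B_2$ are \emph{not} block diagonal: in $2\times 2$ block form they are block lower triangular with nonzero lower-left blocks $B_1^{21},B_2^{21}$, so that
\begin{align*}
B_iQ_{r-i}=\begin{bmatrix}0 & 0\\ B_i^{22}Q_{r-i}^{21} & B_i^{21}Q_{r-i}^{12}\end{bmatrix},\qquad i=1,2,
\end{align*}
and it is exactly these lower-right entries that produce the terms $B_1^{21}Q_{r-1}^{12}$ and $B_2^{21}Q_{r-2}^{12}$ in $C_r^{22}$. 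Your claims that ``for $i=0,1,2$ the matrices $C_i$ and $B_i$ are block diagonal, so $Q_{r-i}C_i$ and $B_iQ_{r-i}$ stay block anti-diagonal'' and that the $i=1,2$ terms are ``purely lower-block-anti-diagonal'' would make those contributions vanish, contradicting the formula you then write down.

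Second, the $(1,1)$ block of $C_r$ does not arise ``via $Q_{r-3}^{21}C_3^{11}$'': by (\ref{A:3 of many})--(\ref{A:4 of many}) that product occupies the $(2,1)$ (anti-diagonal) position of $Q_{r-3}C_3$ and is absorbed into the choice of $Q_r$. The true source is the $(1,1)$ block of $B_3Q_{r-3}$, namely $B_3^{12}Q_{r-3}^{21}$, together with the specific fact $B_3^{12}=I_2$, which you never invoke; without it one cannot obtain the clean identity $C_r^{11}=Q_{r-3}^{21}$ (your own intermediate attribution would instead yield $Q_{r-3}^{21}C_3^{11}$, and $C_3^{11}\neq I_2$). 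So the induction scheme, the appeal to Remark~\ref{R:alwaysblockdiagonal} for solvability of the anti-diagonal equation, and the final splitting into (\ref{A:CtoR}) and (\ref{A:CtoT}) are all fine, but the central block computation must be redone with the actual shapes of $B_1,B_2$ (block lower triangular), $B_3$ (full, with $B_3^{12}=I_2$), and the block-diagonal $C_0,\dots,C_3$, after which one checks that $(r-2)Q_{r-2}$, the $i=0$ term, and the tail $\sum_{i\geq 4}Q_{r-i}C_i$ are all block anti-diagonal and only then reads off $C_r^{11}$ and $C_r^{22}$.
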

\begin{proof}

Since $C_r$ is a block diagonal matrix, 
recurrence (\ref{A:friii}) combined with 
equations (\ref{A:1 of many})--(\ref{A:4 of many}) gives
us the desired result.
\end{proof}

What remains is to solving the systems (\ref{A:CtoR}) and (\ref{A:CtoT}).
The former ODE is relatively easy since it does not have a singularity anymore. 
However, the second ODE (\ref{A:CtoT}) does have a singularity. 
Moreover, we still do not know the exact forms of neither $Q^{12}(x)$ 
nor $Q^{21}(x)$. On the positive side, by taking advantage of the particular 
structure of $B(x)$'s we are able to find recurrences for $R(x)$ and $T(x)$. 
\vspace{.5cm}

To find a recurrence
for the blocks of $Q_r$'s, 
we use Proposition~\ref{L:first simplification}
as well as the simplified fundamental recurrence (\ref{A:friii}) as follows:
\begin{align*}
C_r &= \begin{bmatrix}
Q_{r-3}^{21} & 0 \\
0 & B_1^{21} Q_{r-1}^{12} + B_2^{21} Q_{r-2}^{12} + B_3^{21} Q_{r-3}^{12}
\end{bmatrix}\\ =& 
\begin{bmatrix}
0 & (r-2)Q_{r-2}^{12}  \\
(r-2) Q_{r-2}^{21} & 0
\end{bmatrix} -
\begin{bmatrix}
0 & 2Q_{r}^{12}  \\
-2Q_{r}^{21} & 0
\end{bmatrix} -
\begin{bmatrix}
0 & Q_{r-1}^{12}B_{1}^{22}  \\
-B_{1}^{22}Q_{r-1}^{21} & -B_{1}^{21}Q_{r-1}^{12}
\end{bmatrix}\\ 
-&
\begin{bmatrix}
0 & Q_{r-2}^{12}B_{2}^{22}  \\
-B_{2}^{22}Q_{r-2}^{21} & -B_{2}^{21}Q_{r-2}^{12}
\end{bmatrix}-
\begin{bmatrix}
-B_{3}^{12}Q_{r-3}^{21} & Q_{r-3}^{12}B_{3}^{22} - B_{3}^{11}Q_{r-3}^{12}  \\
Q_{r-3}^{21}B_{3}^{11} -B_{3}^{22} Q_{r-3}^{21}& -B_{3}^{21}Q_{r-3}^{12}
\end{bmatrix}\\
-& \sum_{i=4}
\begin{bmatrix}
0 & Q_{r-i}^{12}  \\
Q_{r-i}^{21} & 0
\end{bmatrix} 
\begin{bmatrix}
Q_{i-3}^{21} & 0  \\
0 & B_1^{21} Q_{i-1}^{12} + B_2^{21} Q_{i-2}^{12} + B_3^{21} Q_{i-3}^{12}
\end{bmatrix} \\
=&
\begin{bmatrix}
Q_{i-3}^{21} &
\begin{matrix}
(r-2)Q_{r-2}^{12}-2Q_{r}^{12}-Q_{r-1}^{12}B_1^{22} \\
\hfill{}-Q_{r-2}^{12}B_2^{22}-Q_{r-3}^{12}B_3^{22}+B_3^{11}Q_{r-3}^{12}\\
\hfill{}- \sum_{i=4} Q_{r-i}^{12}(B_1^{21} Q_{i-1}^{12} + B_2^{21} Q_{i-2}^{12} + B_3^{21} Q_{i-3}^{12})
\end{matrix}   \\
\begin{matrix}
(r-2)Q_{r-2}^{21}+2Q_{r}^{21}+B_1^{22}Q_{r-1}^{21}+B_2^{22}Q_{r-2}^{21}\\
\hfill{}-Q_{r-3}^{21}B_3^{11}+B_3^{22}Q_{r-3}^{21}- \sum_{i=4} Q_{r-i}^{21}Q_{i-3}^{21}
\end{matrix}
 & B_1^{21} Q_{i-3}^{12} + B_2^{21} Q_{i-2}^{12} + B_3^{21} Q_{i-3}^{12}
\end{bmatrix}.
\end{align*}
Observe that the diagonal blocks do not 
give us any new information, however,
the anti-diagonal blocks do. 
By the equality of the bottom left blocks,  
we have 
\begin{align}\label{A:lowerentry}
2Q_{r}^{21}=-(r-2)Q_{r-2}^{21}
-B_1^{22}Q_{r-1}^{21}-B_2^{22}Q_{r-2}^{21}+Q_{r-3}^{21}B_3^{11}-B_3^{22}Q_{r-3}^{21}
+ \sum_{i=4}^{r-1} Q_{r-i}^{21}Q_{i-3}^{21}
\end{align}
Similarly, the equality of the top right blocks give 
\begin{align}
\begin{split}\label{A:upperentry}
2Q_r^{12} ={}& (r-2)Q_{r-2}^{12}-Q_{r-1}^{12}B_1^{22}-Q_{r-2}^{12}B_2^{22}-Q_{r-3}^{12}B_3^{22}+B_3^{11}Q_{r-3}^{12}\\
-& \sum_{i=4}^{r-1} Q_{r-i}^{12}(B_1^{21} Q_{i-1}^{12} + B_2^{21} Q_{i-2}^{12} + B_3^{21} Q_{i-3}^{12}).
\end{split}
\end{align}

Obviously, these recurrences enable us to 
write the precise forms of the ODE's (\ref{A:CtoR})
and (\ref{A:CtoT}). Both of these ODE's can now 
be solved by applying suitable shearing transformations
leading to a solution of our original equation $x^3 X'=A(x) X$.
However, due to its high computational 
cost the result is still not better than the 
expressions for $\beta_{k,p,q}$'s that
we recorded in Theorem~\ref{T:first}.

\vspace{1cm}
\textbf{Acknowledgements.}
We are thankful to Roger Howe who shared with us his manuscript on 
$K\backslash G / B$-decomposition of classical groups.


\begin{thebibliography}{9}

\bibitem{Brion}
Brion, M. 
{\em Quelques proprietes des espaces homogenes spheriques.}
Manuscripta Math (1986) 55: 191. 


\bibitem{BV}
Babbitt, D.G, Varadarajan, V.S.
{\em Formal reduction theory of meromorphic differential equations: a group theoretic view.} 
Pacific J. Math. 109 (1983), no. 1, 1--80. 



\bibitem{CJW}
Can, B.M., Joyce M., and Wyser, B. J.
{\em Chains in Weak Order Posets Associated with Involutions.}
Journal of Combinatorial Theory Series A 137, 207--225, 2016.


\bibitem{CU}
Can, B.M., U\u{g}urlu, \"O.
{\em The Genesis of Involutions (Polarizations and Lattice Points).}
\url{arXiv:1703.09881}



\bibitem{Howe}
Howe, R.E.
{\em Perspectives on Invariant Theory.}
The Schur lectures (1992) (Tel Aviv), 1995, pp. 1--182.
Israel mathematical conference proceedings, vol. 8.







\bibitem{MO}
Matsuki, T. and Oshima T.
{\em Embeddings of Discrete Series Into Principal Series.} 
In The orbit method in representation theory (Copenhagen, 1988), 
volume 82 of Progr. Math., pages 147--175. 
Birkh\"{a}user Boston, Boston, MA, 1990.


\bibitem{Monty09}
McGovern, W.M.
{\em Closures of $K$-orbits in the flag variety for $U(p,q)$.}
J. Algebra 322 (2009), no. 8, 2709--2712. 







\bibitem{Tur}
Turrittin, H. L.
{\em Convergent Solutions Of Ordinary Linear Homogeneous 
Differential Equations in the Neighborhood of an Irregular Singular Point.} 
Acta Math, 93 (1955), 27-66.

\bibitem{WyserI}
Wyser, B.J.
{\em Symmetric Subgroup Orbit Closures on Flag Varieties: 
Their Equivariant Geometry, Combinatorics, and Connections with Degeneracy Loci.}
arXiv:1201.4397.


\bibitem{WyserII}
Wyser, B.J.
{\em $K$-Orbits On $G/B$ and Schubert Constants for Pairs of Signed Shuffles in Types C and D.} 
J. Algebra 360 (2012), pp. 67--87.


\bibitem{WyserIII}
Wyser, B.J.
{\em The Bruhat order on clans.} 
J. Algebraic Combin., 44 (2016), no. 3, pp. 495-517


\bibitem{WyserIV}
Woo, A., Wyser, B.J., Yong, A.
{\em Governing singularities of symmetric orbit closures.} 
Algebra Number Theory, to appear. 
\url{arXiv:1701.02774}

\bibitem{Yamamoto}
Yamamoto, A. 
{\em Orbits In The Flag Variety and Images of The Moment Map for Classical Groups. I.} 
Represent. Theory, 1:329--404 (electronic), 1997.



\end{thebibliography}
\end{document}